\newtheorem{theorem}{Theorem}[section]
\newtheorem{corollary}[theorem]{Corollary}
\newtheorem{lemma}[theorem]{Lemma}
\newtheorem{assumption}{Assumption}
\newtheorem{definition}{Definition}[section]
\newtheorem{remark}[theorem]{Remark}
\newcommand{\N}{\mathbb{N}}
\newcommand{\R}{\mathbb{R}}
\newcommand{\eps}{\varepsilon}
\newcommand{\half}{\frac{1}{2}}
\newcommand{\intom}{\int\limits_{\Omega}}
\newcommand\ds{\,\mathrm{d}s}
\newcommand\dt{\,\mathrm{d}t}
\newcommand\dx{\,\mathrm{d}x}
\numberwithin{equation}{section}
\title{Optimal control of an evolution equation with non-smooth dissipation
	\thanks{This research was partially supported by the German Research Foundation (DFG) within the priority program "Non-smooth and
       Complementarity-based Distributed Parameter Systems: Simulation and Hierarchical
       Optimization" (SPP 1962) under
		grant number Wa 3626/3-1.}
	}
\author{
	Tobias Geiger%
	\thanks{University of Würzburg, Institute of Mathematics,
		Emil-Fischer-Str.\ 30, 97074 Würzburg, Germany; tobias.geiger@mathematik.uni-wuerzburg.de}
	\and
	Daniel Wachsmuth%
	\thanks{University of Würzburg, Institute of Mathematics,
		Emil-Fischer-Str.\ 30, 97074 Würzburg, Germany; daniel.wachsmuth@mathematik.uni-wuerzburg.de}
}
\begin{document}
\maketitle
\begin{abstract}

In the present work we study the optimal control of an evolution equation with non-smooth dissipation.  The solution mapping of this system is non-smooth and hence the analysis is quite challenging. Our approach is to regularize the dissipation via approximation by a smooth function. We derive optimality conditions for the corresponding smooth optimal control problem. Then we drive the regularization parameter to zero and obtain necessary optimality conditions for the original non-smooth problem. However, in this process we lose regularity of the adjoint variables.

\end{abstract}

{\small
\noindent{\bf Keywords: } optimal control, non-smooth state equation, necessary optimality conditions.
\\

\noindent{\bf AMS subject classification: }
49M20, 
65K10, 
90C30. 
\\
}

\section{Introduction}

We are interested in the optimal control of the following non-smooth evolution problem.
Let $\Omega \subset \R^d$ be a bounded Lipschitz domain, $d\in \N$,  and $I:=(0,T)$ an interval.
Let us define dissipation and energy functional by
\begin{equation}
\mathcal{D} \colon \; H^1_0(\Omega) \to \R, \quad \mathcal{D}(v) := \int_\Omega \vert v\vert + \frac{\sigma}{2} \vert \nabla v \vert^2\; \dx,
\end{equation}
\begin{equation}
\mathcal{E}\colon H^1_0(\Omega)\times L^2(\Omega) \to \R, \quad \mathcal{E}(z,g):= \int_\Omega \frac{1}{2} \vert \nabla z \vert^2 -z\cdot g\; \dx  .
\end{equation}
where $\sigma$ is a positive viscosity parameter.
The function $z$ is the state of the system, while the function $g$ acts as a distributed control.
Minimization of $\mathcal{E}(z(t),g(t)) + \mathcal{D}(\dot{z}(t))$ with respect to $z$ motivates the differential inclusion
\begin{equation}\label{eq003}
0 \in \partial\mathcal{D}(\dot{z}) + \partial_z \mathcal{E}(z,g),
\end{equation}
where $\partial$ denotes the convex subdifferential. We obtain
\begin{equation}\label{eq004}
0 \in \partial \vert \dot{z}(t,x) \vert - \Delta z(t,x) - \sigma \Delta \dot{z}(t,x) - g(t,x) \qquad \text{f.a.a.\@ } (t,x)\in I\times \Omega,
\end{equation}
where $g$ is the control and $z$ the state. The system is complemented by an initial condition $z(0)=z_0$.
Due to the appearance of the subdifferential, the evolution system is
inherently non-smooth. This makes the derivation of first-order necessary optimality conditions very challenging.

The non-smooth evolution system can be interpreted as a simplification of models appearing in applications.
Various different physical phenomena can be modelled by such non-smooth systems. This includes, e.g.,
electromagnetism, damage and crack propagation, and models with phase changes, see for instance the recent monograph \cite{MR2015}.
In order to focus on the impact of the non-smoothness of the model on the optimization, we decided to
study the simplified model with convex and quadratic energy.

Let us point out connections to other models studied in the literature.
Using a duality argument, we can rewrite the differential inclusion. To this end, let us introduce
\[
K:= \{ v \in H^1_0(\Omega)^*\mid  v \in L^2(\Omega),\; -1 \leq v \leq 1 \; \text{a.e. in}\; \Omega\},
\]
which is equal to the range of the subdifferential of the $L^1(\Omega)$-norm considered as a convex function on $H^1_0(\Omega)$.
As we will see in Lemma \ref{lemma: Normalenkegel}, the inclusion \eqref{eq004} is equivalent to
\begin{equation}\label{eq005}
\dot{z}(t) \in N_K(\Delta z(t) + \sigma\Delta \dot{z}(t) +g(t)) \qquad \text{f.a.a.\@ } t\in I.
 \end{equation}
Thus for $\sigma=0$ the inclusion can be considered as a sweeping process in the space $H^1_0(\Omega)$.
Let us emphasize two important properties of the set-valued mapping $z\mapsto N_K(\Delta z)$: first of all,
the images of this mapping are either unbounded or empty. And second, due to the results of \cite{CW17}
the set $K$ is not polyhedric in $H^1_0(\Omega)^*$.
At least one of these two properties is used in many works on optimal control of differential inclusions.
In addition, in both formulations \eqref{eq003} and \eqref{eq005} the arguments of the non-smooth mapping
contain the highest-order time or spatial derivative of $z$, which points to a lack of compactness in our system.
That is, the arguments of the non-smooth maps do not compactly depend on $z$ for sensible choices of function spaces.

Let us comment on available literature for control of non-smooth evolution systems.
Optimal control of parabolic variational inequalities of the type
$y_t-\Delta y + \beta(y)=u$
with $\beta$ a maximal monotone, set-valued operator were studied for instance in the monographs \cite{NeittaanmakiTiba1994,Tiba1990},
see also the recent contribution \cite{MSusu17}.
Optimal control problems of the sweeping process in finite-dimensions was studied in \cite{CHHM2012,CHHM2016}.
Recent works on optimal control of differential inclusions are \cite{MW2015,PRR2017}. There,
the set-valued map is assumed to have bounded images on bounded sets, an assumption that is not fulfilled in our setting.
The sweeping process is related to the so-called play operator, which is the solution map
of a rate-independent variational inequality. Optimal control problems
of the coupling of a play operator on $\mathbb R^n$ coupled with a ODE system was studied
in \cite{BK2016}, the coupling with a parabolic pde was investigated in \cite{M2017b}.
In \cite{GQ2011} the control of systems contain play operators on infinite time horizons was studied.
Due to the arguments above, all these results are not directly applicable to our setting.

To overcome the difficulties related to the non-smoothness of the system, we follow the popular approach of smoothing the state equation.
The regularization scheme is introduced in section \ref{sec3},
and its convergence properties are investigated in section \ref{section: Passing to limit in smooth equation}.
Passing to the limit with the regularization parameter, allows us to obtain a first-order system,
which is the main result of our paper in Theorem \ref{theorem: optimality system nonsmooth}.
It turns out that the appearing adjoint functions have rather low regularity.
The present work is strongly related to the earlier contribution  \cite{SWW2016}.
There, optimal control of an rate-independent system was studied, which corresponds to our problem with $\sigma=0$.
The positive parameter $\sigma>0$ enables us to prove stronger results than  \cite{SWW2016}. We comment on this at the end of Section \ref{sec74}, see Remark \ref{compare_to_SWW}.

\subsubsection*{Notation and function spaces}
We will work with the Lebesgue and Sobolev spaces $L^p(\Omega)$, $W^{1,p}(\Omega)$, $W^{1,p}_0(\Omega)$. In order to shorten the notation we define $H:= L^2(\Omega)$, $V:= H^1_0(\Omega)$, $V^*:= H^{-1}(\Omega):= (H^1_0(\Omega))^*$. We define the Laplace operator in a distributional sense

\begin{align*}
\Delta: V \to V^*: \ \langle \Delta u ,v \rangle_{V^*,V} := - \intom \nabla u(x) \cdot \nabla v(x) \dx
\end{align*}
The inner product on $V$ is defined by $(u,v)_{H^1_0(\Omega)}= (u,v)_V := \intom \nabla u(x) \cdot \nabla v(x) \dx $ and its induced norm by $\lVert u \rVert_V := \sqrt{(u,u)_V}$. Due to the zero boundary condition this norm is equivalent to the $H^1(\Omega)$-norm in the space $V$.
Since we are analyzing an evolution equation, we need Bochner spaces like $L^p(I,X)$ and $H^1(I,X)$, where $X$ is a real Banach space. The state equation of interest is equipped with a zero initial condition. Therefore we define the spaces
\[\begin{aligned}
H^1_\star(I,X)&:= \{ u \in H^1(I,X) \mid u(0) = 0  \}, \\
H^2_\star(I,X)&:= \{ u \in H^2(I,X) \mid u(0) = 0,\; \dot{u}(0) = 0  \}
,
\end{aligned}\]
where $\dot u$ denotes the weak derivative with respect to the time variable $t$.
Moreover, we work with the standard Hilbert triple $V \hookrightarrow H \cong H^* \hookrightarrow V^*$
induced by the $L^2$-inner product in order to use $L^2(\Omega)$-functions as elements of $V^*$.\\

\section{The non-smooth optimal control problem}

Let us first the define the notion of weak solutions of the differential inclusion \eqref{eq003}.

\begin{definition}
 A function $z\in H^1_\star(I,V)$ is called weak solution of \eqref{eq003} if and only if
 for almost all $t\in I$ it holds
 \[
 0 \in \partial \vert \dot{z}(t) \vert - \Delta z(t) - \sigma \Delta \dot{z}(t) - g(t) \qquad
 \text{ in } V^*.
 \]
 Here, $\partial \vert v\vert$ denotes the subdifferential of the $L^1(\Omega)$-norm
 with respect to the space $V$.
\end{definition}

The state equation is uniquely solvable and we have the following theorem, which will be proven in section \ref{section: Passing to limit in smooth equation}.

\begin{theorem} \label{theorem:  solution non-smooth}
	For all $g \in H^1_\star(I,V^*)$ there exists a unique solution	$z\in H^2_\star(I,V) $ of the non-smooth state equation \eqref{eq: non-smooth problem}.
\end{theorem}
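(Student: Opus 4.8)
The plan is to carry out the regularization approach outlined in the introduction. First I would replace the non-smooth integrand $|\cdot|$ in the dissipation by a smooth, convex approximation $\psi_\eps$ (with $\psi_\eps(0)=0$, $\psi_\eps'(0)=0$, $\psi_\eps'$ nondecreasing and $|\psi_\eps'|\le 1$, e.g.\ a Huber-type regularization), which turns the inclusion \eqref{eq004} into the smooth state equation
\[
-\sigma\Delta\dot z_\eps + \psi_\eps'(\dot z_\eps) - \Delta z_\eps - g = 0, \qquad z_\eps(0)=0 .
\]
The strategy is then: (i) solve this smooth equation for fixed $\eps$ and show $z_\eps\in H^2_\star(I,V)$; (ii) derive a priori bounds that are uniform in $\eps$; (iii) pass to the limit $\eps\to 0$ to obtain existence; and (iv) prove uniqueness separately by a monotonicity argument.

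For step (i), I would read the smooth equation as an ODE in $V$. The operator $A_\eps:=-\sigma\Delta+\psi_\eps'(\cdot)\colon V\to V^*$ is hemicontinuous, bounded and strongly monotone, the coercivity coming entirely from $\la-\sigma\Delta w,w\ra=\sigma\norm{w}_V^2$ while $\la\psi_\eps'(w),w\ra\ge 0$. By the theorem of Browder and Minty $A_\eps$ is a bijection with globally Lipschitz inverse $S_\eps\colon V^*\to V$, so the equation is equivalent to $\dot z_\eps=S_\eps(\Delta z_\eps+g)$. Since $S_\eps\circ(\Delta\,\cdot+g)$ is Lipschitz in $z$ and, via $g\in H^1_\star(I,V^*)\hookrightarrow C(\bar I,V^*)$, continuous in $t$, the Cauchy--Lipschitz theorem in the Banach space $V$ yields a unique $z_\eps\in C^1(\bar I,V)$; differentiating the identity $\dot z_\eps=S_\eps(\Delta z_\eps+g)$ in time (difference quotients together with the Lipschitz bound for $S_\eps$) upgrades this to $z_\eps\in H^2(I,V)$. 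Note that $g(0)=0$ forces $A_\eps(\dot z_\eps(0))=\Delta z_\eps(0)+g(0)=0$, whence $\dot z_\eps(0)=0$ by strong monotonicity; together with $z_\eps(0)=0$ this gives $z_\eps\in H^2_\star(I,V)$.

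The uniform estimates in step (ii) are the heart of the construction. Testing the smooth equation with $\dot z_\eps$ and using $\la\psi_\eps'(\dot z_\eps),\dot z_\eps\ra\ge 0$ together with $\la\Delta z_\eps,\dot z_\eps\ra=-\half\frac{\mathrm{d}}{\mathrm{d}t}\norm{z_\eps}_V^2$ bounds $z_\eps$ in $L^\infty(I,V)\cap H^1(I,V)$. Differentiating the equation in time and testing with $\ddot z_\eps$, where the monotonicity $\psi_\eps''\ge 0$ again discards the nonlinear contribution and the boundary term is controlled because $\dot z_\eps(0)=0$, bounds $\ddot z_\eps$ in $L^2(I,V)$; here the hypothesis $g\in H^1_\star(I,V^*)$ is used through $\dot g\in L^2(I,V^*)$ and $g(0)=0$. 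All constants are independent of $\eps$, so $(z_\eps)$ is bounded in $H^2_\star(I,V)$ and $(\psi_\eps'(\dot z_\eps))$ is bounded in $L^\infty(I\times\Omega)$.

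Finally, for steps (iii) and (iv) I would extract a subsequence with $z_\eps\rightharpoonup z$ in $H^2(I,V)$ and, by the Aubin--Lions lemma applied to $\dot z_\eps$ (bounded in $H^1(I,V)$ with $V\hookrightarrow\hookrightarrow H$), strong convergence $\dot z_\eps\to\dot z$ in $L^2(I,H)$, hence almost everywhere; moreover $\psi_\eps'(\dot z_\eps)\rightharpoonup\xi$ in $L^2(I\times\Omega)$. Passing to the limit in the linear terms is immediate, and the limit $z$ inherits the initial conditions. The main obstacle is to identify the limit of the non-smooth term, i.e.\ to show that $\xi$ is a selection of $\partial|\dot z|$ almost everywhere: this is exactly where the set-valuedness enters, and I would resolve it by the maximal monotonicity of the graph $\partial|\cdot|$, combining the weak convergence of $\psi_\eps'(\dot z_\eps)$ with the strong (a.e.) convergence of $\dot z_\eps$ in a Minty-type $\limsup$ argument. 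Uniqueness is comparatively routine: subtracting the equations for two solutions, writing $w:=z_1-z_2$ and testing with $\dot w$, the monotonicity of the subdifferential gives $\la\xi_1-\xi_2,\dot w\ra\ge 0$, so that $\sigma\norm{\dot w}_V^2+\half\frac{\mathrm{d}}{\mathrm{d}t}\norm{w}_V^2\le 0$; integrating from $0$ with $w(0)=0$ forces $w\equiv 0$, and it is here that $\sigma>0$ makes the argument direct.
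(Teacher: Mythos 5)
Your proposal is correct and follows the same global strategy as the paper (regularize the absolute value, solve the smooth equation via strong monotonicity of $-\sigma\Delta+{|\cdot|}_\rho'$ and an ODE argument in $V$, derive $\rho$-uniform bounds, pass to the limit, and prove uniqueness by the standard monotonicity computation — your steps (i), (ii) and (iv) coincide with Sections 3.2, 3.4 and the end of the paper's proof almost verbatim). Where you genuinely diverge is the limit passage in step (iii). The paper's key device is Lemma \ref{lemma: Cauchy property of smooth equation}: exploiting the estimate $\big\vert\,\vert v\vert_{\rho_1}-\vert v\vert_{\rho_2}\big\vert\le\vert\rho_1-\rho_2\vert$ from Assumption \ref{assump: smooth approx}, it shows that $(z_{\rho_n})$ is a Cauchy sequence in $H^1_\star(I,V)$, so that $\dot z_{\rho_n}\to\dot z$ \emph{strongly in} $L^2(I,V)$; the subdifferential inclusion is then obtained by passing to the limit directly in the convexity inequality \eqref{proof: existence uniqueness of nonsmooth1}, with no need to introduce a separate weak limit $\xi$ of the nonlinearity. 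You instead settle for weak convergence in $H^2(I,V)$, upgrade $\dot z_\eps$ to strong convergence only in $L^2(I,H)$ via Aubin--Lions, and identify the weak $L^2$-limit of $\psi_\eps'(\dot z_\eps)$ by a Minty-type argument. Both routes close the argument: yours is more generic (it does not use the quantitative closeness of the regularizations to each other, only $\vert\psi_\eps'\vert\le1$ and convexity), while the paper's Cauchy estimate buys strictly more — the strong $H^1_\star(I,V)$ convergence $\mathcal S_\rho(g)\to\mathcal S(g)$ recorded in Corollary \ref{cor: convergence solutions} and reused in Theorem \ref{theorem: GW glatt} and throughout Section \ref{chapter: Optimality system} — which is why the paper takes that detour. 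One small point to make rigorous in your version: the inclusion must hold in $V^*$ for almost every fixed $t$, so after identifying $\xi(t,x)$ as a pointwise selection of $\partial\vert\dot z(t,x)\vert$ you should invoke the equivalence of the pointwise and the $V^*$-subdifferential descriptions (the content of Lemma \ref{lemma: subdiff L1}) to land in the form required by the definition of weak solution.
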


Note that $g \in H^1_\star(I,V^*)$ and $z\in H^1_\star(I,V) $ include
the conditions $g(0)=0$ and $z(0)=0$. In view of the results derived below,
these conditions can be viewed as compatibility conditions at $t=0$.

As a conclusion of Theorem \ref{theorem: solution non-smooth} we can define a solution operator, which maps a control to the corresponding state.
\begin{equation}
\mathcal{S}: \; H^1_\star(I,L^2(\Omega)) \rightarrow H^1_\star(I,H^1_0(\Omega))\quad g \mapsto z .
\end{equation}
We now take a closer look at the subdifferential of the non-smooth part of $\mathcal{D}$, which is $\hat{\mathcal{D}}(v):= \lVert v \rVert_{L^1(\Omega)}$ and give characterizations of the state equation via cones. To this end we define
\begin{align*}
K&:= \{ v \in V^*\mid  v \in H,\; -1 \leq v \leq 1 \; \text{a.e. in}\; \Omega\},\\
N_K(v) &:= \{ w \in V \mid \langle w,\tilde{v}-v\rangle_{V,V^*} \leq 0 \; \forall \tilde{v} \in K \}, \\
\tilde{K}&:= \{ w \in V \mid \Delta w \in H,\; -1 \leq \Delta w \leq 1 \; \text{a.e. in}\; \Omega\} = \Delta^{-1}(K),\\
N^{\text{Hilbert}}_{\tilde{K}}(v) &:= \{ w \in V\mid (w,\tilde{v}-v)_{V} \leq 0 \; \forall \tilde{v} \in \tilde{K} \}.
\end{align*}

\begin{lemma} \label{lemma: subdiff L1}
	Let $v \in V$ and $f\in V^*$. Then it holds
	\begin{equation*}
	f \in \partial \hat{\mathcal{D}}(v) \quad \Leftrightarrow \quad f\in K  \text{ and } v \in N_K(f).
	\end{equation*}
	Moreover, $f\in \partial \hat{\mathcal{D}}(v)$ implies $f\in H$ and
	\begin{equation*}
	f(x) \in
	\begin{cases}
	\{ 1 \}& \text{ if }v(x) > 0 ,\\
	[-1,1] & \text{ if }v(x)= 0,\\
	\{ -1 \} & \text{ if }v(x) < 0,
	\end{cases}
	\end{equation*}
	i.e., $f(x)$ is in the subdifferential of the absolute value function evaluated at $v(x)$ for almost all $x\in \Omega$.
\end{lemma}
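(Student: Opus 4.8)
The plan is to exploit the positive one-homogeneity of $\hat{\mathcal{D}}$ together with the duality $(L^1(\Omega))^*=L^\infty(\Omega)$. Starting from the definition of the convex subdifferential on $V$, namely $f\in\partial\hat{\mathcal{D}}(v)$ iff $\hat{\mathcal{D}}(w)-\hat{\mathcal{D}}(v)\ge \la f,w-v\ra_{V^*,V}$ for all $w\in V$, I would first test with $w=0$ and $w=2v$. Since $\hat{\mathcal{D}}(\lambda v)=\lambda\,\hat{\mathcal{D}}(v)$ for $\lambda\ge 0$, these two choices sandwich $\la f,v\ra$ and force the equality $\la f,v\ra_{V^*,V}=\hat{\mathcal{D}}(v)=\lVert v\rVert_{L^1(\Omega)}$. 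Substituting this back, the subgradient inequality collapses to the single condition $\la f,w\ra_{V^*,V}\le \lVert w\rVert_{L^1(\Omega)}$ for all $w\in V$ (replacing $w$ by $-w$ gives the two-sided bound). Hence $f\in\partial\hat{\mathcal{D}}(v)$ is equivalent to the pair of conditions: (i) $\la f,v\ra=\lVert v\rVert_{L^1(\Omega)}$, and (ii) $f$ lies in the unit ball of the dual of the $L^1$-norm on $V$.

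The key step is to identify condition (ii) with the membership $f\in K$, which simultaneously yields the asserted extra regularity $f\in H$. The bound $\vert\la f,w\ra\vert\le \lVert w\rVert_{L^1(\Omega)}$ says that $f$, viewed as a linear functional on the dense subspace $V\subset L^1(\Omega)$, is $\lVert\cdot\rVert_{L^1(\Omega)}$-continuous with norm at most one. By density of $V$ in $L^1(\Omega)$ (which holds since $C_c^\infty(\Omega)\subset V$), it extends uniquely to an element of $(L^1(\Omega))^*=L^\infty(\Omega)$ of norm at most one, and Riesz representation provides a function, still denoted $f$, with $f\in L^\infty(\Omega)$, $\lVert f\rVert_{L^\infty(\Omega)}\le 1$, and $\la f,w\ra_{V^*,V}=\intom f\,w\dx$ for all $w\in V$. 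Since $\Omega$ is bounded this gives $f\in H$ with $-1\le f\le 1$ a.e., i.e.\ $f\in K$; conversely any $f\in K$ clearly satisfies (ii). This density-and-duality extension is the one genuinely nontrivial point, as it is precisely where regularity is gained; everything else is elementary.

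It then remains to recast condition (i) as $v\in N_K(f)$. With $f\in K\subset H$, the pairing in the definition of $N_K$ reduces to the $L^2$ inner product, so $v\in N_K(f)$ reads $\intom v(\tilde v-f)\dx\le 0$ for all $\tilde v\in K$, equivalently $\intom v\,f\dx=\sup_{\tilde v\in K}\intom v\,\tilde v\dx$. The pointwise constraint $-1\le\tilde v\le 1$ shows this supremum equals $\intom\vert v\vert\dx=\lVert v\rVert_{L^1(\Omega)}$, attained at $\tilde v=\operatorname{sign} v$, so $v\in N_K(f)$ is exactly condition (i). This closes the equivalence.

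Finally, for the pointwise description I would combine (i) with $f\in K$: the identity $\intom(\vert v\vert-f\,v)\dx=0$ has a pointwise nonnegative integrand, since $\vert v\vert-f\,v\ge\vert v\vert(1-\vert f\vert)\ge 0$ a.e., whence $\vert v(x)\vert=f(x)\,v(x)$ a.e. Reading this identity off on the sets $\{v>0\}$, $\{v<0\}$ and $\{v=0\}$ yields $f(x)=1$, $f(x)=-1$ and $f(x)\in[-1,1]$ respectively, which is the claimed subdifferential of the absolute value.
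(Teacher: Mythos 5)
Your proof is correct, but it takes a different route from the paper's in both halves of the lemma. For the equivalence, the paper writes $\hat{\mathcal{D}}=\sigma_K=\delta_K^*$ and invokes the Legendre--Fenchel inversion $f\in\partial\delta_K^*(v)\Leftrightarrow v\in\partial\delta_K(f)$, so that $f\in K$ (and hence the regularity $f\in H$) drops out automatically from $\operatorname{dom}\delta_K=K$; the only duality fact needed there is the elementary identity $\lVert v\rVert_{L^1(\Omega)}=\sup_{\tilde v\in K}\int_\Omega \tilde v\,v\,\mathrm{d}x$. You instead work directly with the subgradient inequality, split it via positive homogeneity into the equality $\langle f,v\rangle=\lVert v\rVert_{L^1(\Omega)}$ plus the dual-norm bound $|\langle f,w\rangle|\le\lVert w\rVert_{L^1(\Omega)}$, and then identify the abstract dual unit ball with $K$ through density of $V$ in $L^1(\Omega)$ and $(L^1)^*=L^\infty$. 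This is more work but makes completely explicit where the gain of regularity $f\in H$, $\lVert f\rVert_{L^\infty}\le1$ comes from, which the paper's argument leaves implicit; the paper's version buys brevity at the cost of citing the conjugacy/inversion machinery for the l.s.c.\ convex function $\delta_K$ on $V^*$. For the pointwise description, both proofs start from the same homogeneity identity $\hat{\mathcal{D}}(v)=\langle f,v\rangle$, but the paper runs a contradiction argument with nested measurable sets $M_\eps$, $A_\delta$, whereas you observe that $\int_\Omega(|v|-fv)\,\mathrm{d}x=0$ with a pointwise nonnegative integrand (since $|f|\le1$) forces $|v|=fv$ a.e.; your version is shorter and arguably the cleaner of the two. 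One small point worth spelling out if you write this up: the $L^\infty$ representative produced by the Hahn--Banach/Riesz step must be checked to coincide with $f$ under the Gelfand-triple embedding $H\hookrightarrow V^*$ induced by the $L^2$ inner product --- you assert this, and it is true precisely because that embedding sends $h$ to $w\mapsto\int_\Omega hw\,\mathrm{d}x$, but it deserves a sentence.
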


\begin{proof}
Let us denote by $\sigma_K$ be the support function and by $\delta_K$ the indicator function of $K$. Then
we have the following chain of equivalences
	\begin{align*}
	f \in \partial \hat{\mathcal{D}}(v) &\Leftrightarrow f \in \partial \sigma_K(v) \\
	&\Leftrightarrow f \in \partial \delta_K^*(v) \\
	&\Leftrightarrow v \in \partial \delta_K(f) \\
	&\Leftrightarrow \delta_K(h) \geq  \delta_K(f) + \langle v,h-f\rangle_{V^*,V}\quad \forall h \in V \\
	&\Leftrightarrow f\in K \; \text{and} \; 0 \geq \langle v,h-f \rangle_{V^*,V} \quad \forall h \in K \\
	&\Leftrightarrow f\in K \; \text{and} \; v \in N_K(f)
	,
	\end{align*}
which proves the first part of the lemma.

Let now $f\in \partial \hat{\mathcal{D}}(v)$ be satisfied.
We already proved in the first part $f\in H$ and $-1 \leq f \leq 1$ a.e.\@ on $\Omega$.
It remains to prove

	\begin{equation*}
	f(x) \in
	\begin{cases}
	\{ 1 \}&\text{for}\; v(x) > 0 ,\\
	\{ -1 \} &\text{for}\; v(x) < 0.
	\end{cases}
	\end{equation*}
Assume there exists a set $M \subset \{ x \in \Omega :\ \vert v(x) \vert >0 \}$ with positive measure such that $\vert f(x) \vert< 1$ a.e.\@ on $M$. Hence there is an $\eps >0$ and a set $M_\eps \subset M$ with $\vert M_\eps \vert >0$ such that $\vert f(x) \vert < 1-\eps$ for a.a.\@ $x\in M_\eps$. We obtain the existence of a $\delta >0$ and a subset $A_\delta \subset M_\eps$ with $\vert A_\delta \vert > 0$ and $v(x)> \delta$ a.e.\@ on $A_\delta$.
	\\
 Due to the positive homogeneity of $\hat{\mathcal{D}}$ we have $  f\in \partial \hat{\mathcal{D}}(v) \Leftrightarrow \hat{\mathcal{D}}(v)= \langle f,v \rangle_{V^*,V}$, see e.g. \cite[Lemma 1.3.1]{MR2015} and we obtain

	\begin{align*}
	\hat{\mathcal{D}}(v)  &= \langle f,v \rangle_{V^*,V}
	= \int\limits_{\Omega} f(x)v(x)\, \dx
	= \int\limits_M fv\, \dx +   \int\limits_{\{ v(x)=0 \}} fv\, \dx
	= \int\limits_{M_\eps} fv\, \dx + \int\limits_{M\setminus M_\eps} fv\, \dx  \\
	&\leq (1-\eps) \int\limits_{M_\eps} \vert v \vert\, \dx + \int\limits_{M\setminus M_\eps} \vert v \vert\, \dx
	= \int\limits_{\Omega} \vert v(x) \vert\, \dx - \eps \int\limits_{M_\eps} \vert v(x) \vert\, \dx \\
	&\leq \lVert v \rVert_{L^1(\Omega)} - \eps \int\limits_{A_\delta} \vert v(x) \vert\, \dx
	\leq \lVert v \rVert_{L^1(\Omega)} - \eps \delta
	<  \lVert v \rVert_{L^1(\Omega)} = \hat{\mathcal{D}}(v),
	\end{align*}
	which is a contradiction.
\end{proof}

Using this lemma one can easily verify the following characterizations of the state equation.

\begin{lemma} \label{lemma: Normalenkegel}
	Let $z \in H^1_\star(I,V)$ and $g\in H^1(I,V^*)$ be given.
	Then the following statements are equivalent.

	\begin{enumerate}
		\item $z$ is a weak solution of \eqref{eq003} to $g$, i.e.,
		\begin{equation*}
		0 \in \partial \vert \dot{z}(t) \vert - \Delta z(t) - \sigma \Delta \dot{z}(t) - g(t) \quad \text{in } V^* \; \text{a.e.\@ on I.}
		\end{equation*}
		\item \begin{equation}
		\dot{z} \in N^{\text{Hilbert}}_{\tilde{K}}(-z-\sigma\dot{z}-\Delta^{-1}g) \quad \text{in } V\; \text{a.e.\@ on I.}
		\end{equation}
		\item \begin{equation}
		\dot{z} \in N_K(\Delta z + \sigma\Delta \dot{z} +g) \quad \text{in } V\; \text{a.e.\@ on I.} \label{eq: normal cone}
		\end{equation}
	\end{enumerate}

\end{lemma}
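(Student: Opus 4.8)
The plan is to treat the three statements as successive reformulations, using Lemma \ref{lemma: subdiff L1} for the first link and the isomorphism $\Delta\colon V\to V^*$ for the second. First I would rewrite statement (1): moving everything except the subdifferential to the other side, the inclusion $0\in\partial|\dot z(t)|-\Delta z(t)-\sigma\Delta\dot z(t)-g(t)$ is, for almost every $t$, equivalent to
\[
\Delta z(t)+\sigma\Delta\dot z(t)+g(t)\in\partial\hat{\mathcal D}(\dot z(t)).
\]
Setting $v:=\dot z(t)\in V$ and $f:=\Delta z(t)+\sigma\Delta\dot z(t)+g(t)\in V^*$, Lemma \ref{lemma: subdiff L1} turns this into ``$f\in K$ and $v\in N_K(f)$'', which is precisely the inclusion \eqref{eq: normal cone} of statement (3), the membership $f\in K$ being the standard defined-ness condition of the normal cone. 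This establishes (1)$\Leftrightarrow$(3) pointwise in $t$, hence a.e.\ on $I$.

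For (3)$\Leftrightarrow$(2) I would push everything through $\Delta^{-1}\colon V^*\to V$, which is an isomorphism since $\Delta$ is. The crucial computation uses $\langle\Delta w,u\rangle_{V^*,V}=-(w,u)_V$, i.e.\ $\Delta$ equals the negative of the Riesz map, so that $\langle f,u\rangle_{V^*,V}=-(\Delta^{-1}f,u)_V$ for all $u\in V$. Writing a generic element of $K$ as $\Delta w$ with $w\in\tilde K=\Delta^{-1}(K)$ and inserting these identities, the defining inequality $\langle\dot z,\Delta w-f\rangle_{V,V^*}\le 0$ of $\dot z\in N_K(f)$ becomes
\[
(\dot z,\Delta^{-1}f-w)_V\le 0\qquad\text{for all }w\in\tilde K.
\]
Since $\Delta^{-1}f=z+\sigma\dot z+\Delta^{-1}g$, the argument $-\Delta^{-1}f=-z-\sigma\dot z-\Delta^{-1}g$ is exactly the one appearing in the normal cone of statement (2).

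The one point that needs care is the sign. The inequality above reads $(\dot z,\Delta^{-1}f-w)_V\le 0$, whereas $\dot z\in N^{\text{Hilbert}}_{\tilde K}(-\Delta^{-1}f)$ requires $(\dot z,w+\Delta^{-1}f)_V\le 0$. These are reconciled by the symmetry $\tilde K=-\tilde K$, which follows from $K=-K$ (if $-1\le v\le 1$ then $-1\le -v\le 1$): replacing $w$ by $-w$ in the quantifier interchanges the two inequalities. Together with the membership equivalence $f\in K\Leftrightarrow\Delta^{-1}f\in\tilde K$, this closes the loop and yields (2)$\Leftrightarrow$(3).

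I expect this sign bookkeeping, and the correct identification of the shifted argument $-\Delta^{-1}f$, to be the only genuine subtlety; everything else is a direct substitution once the duality pairing has been converted into the $V$-inner product via $\Delta=-R$. A minor point to keep honest throughout is that all manipulations hold only for almost every $t\in I$, which is harmless since each of the three statements is formulated as an a.e.\ condition on $I$.
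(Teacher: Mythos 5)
Your argument is correct and is precisely the verification the paper has in mind: the authors omit the proof entirely ("Using this lemma one can easily verify\dots"), and the intended route is exactly yours, namely Lemma \ref{lemma: subdiff L1} for (1)$\Leftrightarrow$(3) and the identity $\langle\Delta w,u\rangle_{V^*,V}=-(w,u)_V$ together with $\tilde K=\Delta^{-1}(K)=-\tilde K$ for (3)$\Leftrightarrow$(2). Your explicit handling of the sign via the symmetry of $\tilde K$ and of the implicit membership condition $f\in K$ in the normal cone is exactly the bookkeeping one needs.
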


Evolution inclusion using the normal cone are known from other problems like the sweeping process, and optimal control problems of this process are analyzed, e.g., in \cite{CHHM2012,CHHM2016}. An important difference is that in our case the time derivative of the state
as well as $\Delta z$ are arguments of the normal cone mapping.

In the next lemma we prove a continuity property of the solution operator $\mathcal{S}$. This lemma as well as the proof are from \cite[Lemma 3.4]{SWW2016}

\begin{lemma} \label{lemma: convergence of solution operator}
	Let $(g_n)_{n\in\N} \in H^1_\star(I;H)$ be a sequence with $g_n \rightharpoonup g$ in $H^1(I,H)$. Then
	$\mathcal{S}(g_n) \rightarrow \mathcal{S}(g)$ in $H^1_\star(I,V)$ and in $\mathcal{C}(\bar{I},V)$. Moreover, $\mathcal{S}$ is Lipschitz continuous from $L^2(I,V^*)$ to $H^1_\star(I,V)$.
\end{lemma}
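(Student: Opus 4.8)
The plan is to derive both assertions of the lemma from a single stability estimate obtained by testing the difference of two state equations with the difference of the corresponding velocities. I would fix two controls $g_1,g_2\in H^1_\star(I,H)$ with states $z_i:=\mathcal{S}(g_i)\in H^2_\star(I,V)$ (existence and regularity from Theorem \ref{theorem: solution non-smooth}). By the definition of weak solution together with Lemma \ref{lemma: subdiff L1}, for each $i$ there is a measurable, $H$-valued selection $f_i(t)\in\partial\vert\dot z_i(t)\vert\subset K\subset H$ such that
\[
f_i-\Delta z_i-\sigma\Delta\dot z_i-g_i=0\quad\text{in }V^*\ \text{a.e. on }I.
\]
Setting $w:=z_1-z_2\in H^2_\star(I,V)$ and subtracting the two identities, I would pair the resulting equation in $V^*$ with $\dot w(t)\in V$, which is admissible since $\dot z_i\in L^2(I,V)$.

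In this pairing the decisive term $(f_1-f_2,\dot w)_H=(f_1-f_2,\dot z_1-\dot z_2)_H\ge 0$ is nonnegative by monotonicity of the subdifferential of the convex functional $\hat{\mathcal{D}}$. Using the identities $\la\Delta w,\dot w\ra_{V^*,V}=-\half\frac{\d}{\dt}\|w\|_V^2$ and $\la\Delta\dot w,\dot w\ra_{V^*,V}=-\|\dot w\|_V^2$, the pairing reduces to
\[
\half\tfrac{\d}{\dt}\|w\|_V^2+\sigma\|\dot w\|_V^2\le\la g_1-g_2,\dot w\ra_{V^*,V}\le\|g_1-g_2\|_{V^*}\|\dot w\|_V.
\]
Absorbing $\|\dot w\|_V$ with Young's inequality (splitting off $\tfrac{\sigma}{2}\|\dot w\|_V^2$) and integrating over $(0,t)$, using $w(0)=0$, yields
\[
\half\|w(t)\|_V^2+\tfrac{\sigma}{2}\inti\|\dot w\|_V^2\dt\le\frac{1}{2\sigma}\|g_1-g_2\|_{L^2(I,V^*)}^2 .
\]
Hence $\|w\|_{\mathcal{C}(\bar I,V)}+\|\dot w\|_{L^2(I,V)}\le C(\sigma)\|g_1-g_2\|_{L^2(I,V^*)}$, which (together with $w(0)=0$, controlling the full $H^1_\star(I,V)$-norm) is exactly the asserted Lipschitz continuity of $\mathcal{S}$ with respect to the $L^2(I,V^*)$-to-$H^1_\star(I,V)$ norms, and simultaneously the $\mathcal{C}(\bar I,V)$-estimate.

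For the weak-to-strong continuity I would reduce the hypothesis $g_n\rightharpoonup g$ in $H^1(I,H)$ to strong convergence in the weaker norm $L^2(I,V^*)$. Since $(g_n)$ and $(\dot g_n)$ are bounded in $L^2(I,H)$ and the embedding $H\hookrightarrow V^*$ is compact (Rellich), the Aubin--Lions--Simon lemma gives relative compactness of $(g_n)$ in $L^2(I,V^*)$; combined with the weak limit (the embedding $H^1(I,H)\hookrightarrow L^2(I,V^*)$ being continuous identifies the limit as $g$) this forces $g_n\to g$ strongly in $L^2(I,V^*)$. Applying the stability estimate above with $g_1=g_n$, $g_2=g$ then gives $\mathcal{S}(g_n)\to\mathcal{S}(g)$ both in $H^1_\star(I,V)$ and in $\mathcal{C}(\bar I,V)$.

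The main point of the argument — and the reason the viscosity $\sigma>0$ is essential — is the coercive term $\sigma\|\dot w\|_V^2$ produced by $\sigma\Delta\dot z$: it is what lets me close the estimate in the full $H^1(I,V)$-norm rather than only in $\mathcal{C}(\bar I,V)$. The remaining technical care is in (i) choosing $f_i$ as a genuine measurable $H$-valued selection so that $(f_1-f_2,\dot w)_H$ is a well-defined $L^2$-integral with the correct sign, and (ii) justifying the chain rule $\frac{\d}{\dt}\|w\|_V^2=2(w,\dot w)_V$, which is valid because $w\in H^1(I,V)$ so that $t\mapsto\|w(t)\|_V^2$ is absolutely continuous.
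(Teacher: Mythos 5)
Your proposal is correct and follows essentially the same route as the paper: the cross-testing of the two normal-cone inequalities in the paper's proof is exactly the monotonicity of $\partial\hat{\mathcal{D}}$ that you invoke via the selections $f_i$, and the resulting energy estimate, Young's inequality, and the compactness of $H^1(I,H)\hookrightarrow L^2(I,V^*)$ via Aubin--Lions are identical. The only difference is presentational (subdifferential selection versus normal-cone formulation), so nothing further is needed.
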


\begin{proof}
Let us denote $z:=\mathcal{S}(g)$, $z_n:=\mathcal{S}(g_n)$.
Due to the continuity of the embedding  $H^1(I,H)\hookrightarrow \mathcal{C}(\bar{I},V)$ we have $g(0)= 0$. Testing \eqref{eq: normal cone} for $z$ with $\sigma \Delta \dot{z}_n + \Delta z_n + g_n$ and for $z_n$ with $\sigma \Delta \dot{z} +\Delta z + g$ we obtain by adding both inequalities and integrating from $0$ to $t$

\begin{equation*}
\frac{1}{2} \lVert z(t) - z_n(t) \rVert_{V}^2  + \sigma \lVert \dot{z}-\dot{z}_n \rVert_{L^2(0,t;V)}^2 \leq \int_0^t \langle \dot{z}_n - \dot{z},g_n-g\rangle_{V,V^*} \ds.
\end{equation*}
Taking the supremum with respect to $t$ yields
\begin{equation*}
\frac{1}{2} \lVert z - z_n \rVert_{C(\bar{I},V)}^2  + \sigma \lVert \dot{z}-\dot{z}_n \rVert_{L^2(I,V)}^2 \leq \lVert \dot{z}-\dot{z}_n \rVert_{L^2(I,V)}\lVert g_n -g \rVert_{L^2(I,V^*)}.
\end{equation*}
Young's  inequality $ab \leq \frac{\sigma}{2}a^2 +  \frac{1}{2\sigma}b^2$ gives

\begin{equation} \label{proof: continuity S 1}
\frac{1}{2} \lVert z - z_n \rVert_{C(\bar{I},V)}^2 + \frac{\sigma}{2} \lVert \dot{z}-\dot{z}_n \rVert_{L^2(I,V)}^2 \leq \frac{1}{2\sigma} \lVert g_n -g \rVert_{L^2(I,V^*)}^2.
\end{equation}

From the Aubin-Lions lemma, see, e.g., \cite{A1963,L1969}, we know that the embedding $H^1(I,H) \hookrightarrow L^2(I,V^*)$ is compact, which proves the assertion.
\end{proof}

\begin{remark}
	The proof shows explicitly $\mathcal{S}(g_n) \to \mathcal{S}(g)$ in $\mathcal{C}(\bar{I},V)$. However, this is also a consequence of the continuity of the embedding $H^1_\star(I,V)\hookrightarrow \mathcal{C}(\bar{I},V) $.
\end{remark}

We will use the previous lemma to show existence of solutions of the optimal control problem \eqref{eq: non-smooth problem} below.

In order to formulate the optimal control problem, we take two functions
\begin{align}
&j_1:\; L^2(I;V) \to \R,  \\
&j_2:\; V \to \R,
\end{align}
which we assume to be continuously Fr\'{e}chet differentiable and bounded from below.
The objective function is given by
\begin{align*}
J:\;  H^1(I,V) \times H^1(I,H) \; &\to \; \R, \notag \\
(z,g) &\mapsto J(z,g) := j_1(z) + j_2(z(T)) + \frac{1}{2} \lVert g \rVert_{H^1(I;H)}^2.
\end{align*}
In the sequel we will study the following optimal control problem
\begin{align}
&\min J(z,g) \qquad \text{with respect to } (z,g)\in H^1(I,V) \times H^1(I,H) \notag  \\
&\text{subject to}
\left\{\!\begin{aligned} &0 \in \partial \vert \dot{z}(t) \vert - \Delta z(t) - \sigma \Delta \dot{z}(t) - g(t) \qquad \text{ in }  V^* \text{ for a.a.\@ } t\in I, \\
&g(0) = 0,\; z(0) = 0.
\end{aligned}\right.
\label{eq: non-smooth problem}
\tag{P}
\end{align}

\begin{theorem} \label{lemma: global solution nonsmooth}
	There exists a solution of the optimal control problem \eqref{eq: non-smooth problem}.
\end{theorem}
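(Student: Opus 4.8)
The plan is to apply the direct method of the calculus of variations, using that the state is uniquely determined by the control through the solution operator $\mathcal{S}$. First I would reformulate \eqref{eq: non-smooth problem} as the unconstrained minimization of the reduced objective
\[
\hat{J}(g) := J(\mathcal{S}(g),g) = j_1(\mathcal{S}(g)) + j_2\big((\mathcal{S}(g))(T)\big) + \frac{1}{2}\norm{g}_{H^1(I;H)}^2
\]
over $g \in H^1_\star(I,H)$; by the unique solvability of the state equation, $\mathcal{S}$ and hence $\hat{J}$ are well defined on this set. The feasible set is non-empty (for instance $g=0$ is admissible), and since $j_1$ and $j_2$ are bounded from below while the Tikhonov term is non-negative, $\hat{J}$ is bounded from below. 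Thus the infimum $m := \inf \hat{J}$ is finite, and I may choose a minimizing sequence $(g_n)_{n\in\N} \subset H^1_\star(I,H)$ with $\hat{J}(g_n) \to m$, setting $z_n := \mathcal{S}(g_n)$.

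Next I would extract a weak limit. Since $j_1$ and $j_2$ are bounded below, convergence $\hat{J}(g_n)\to m$ forces the regularization term $\tfrac12\norm{g_n}_{H^1(I;H)}^2$ to remain bounded, so $(g_n)$ is bounded in the Hilbert space $H^1(I,H)$. After passing to a subsequence (not relabelled) we obtain $g_n \rightharpoonup \bar{g}$ in $H^1(I,H)$. Because point evaluation $g\mapsto g(0)$ is a continuous linear map from $H^1(I,H)$ to $H$, the subspace $H^1_\star(I,H)$ is weakly closed, whence $\bar{g}\in H^1_\star(I,H)$ and in particular the compatibility condition $\bar{g}(0)=0$ is preserved in the limit.

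Finally I would pass to the limit in both constraint and objective. By Lemma \ref{lemma: convergence of solution operator}, the weak convergence $g_n \rightharpoonup \bar{g}$ in $H^1(I,H)$ yields strong convergence $z_n = \mathcal{S}(g_n) \to \bar{z} := \mathcal{S}(\bar{g})$ in $H^1_\star(I,V)$ and in $C(\bar{I},V)$; in particular $z_n \to \bar{z}$ in $L^2(I,V)$ and $z_n(T)\to \bar{z}(T)$ in $V$. Continuity of $j_1$ and $j_2$ then gives $j_1(z_n)\to j_1(\bar{z})$ and $j_2(z_n(T))\to j_2(\bar{z}(T))$, while weak lower semicontinuity of the squared norm yields $\tfrac12\norm{\bar{g}}_{H^1(I;H)}^2 \le \liminf_n \tfrac12\norm{g_n}_{H^1(I;H)}^2$. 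Combining these, $\hat{J}(\bar{g}) \le \liminf_n \hat{J}(g_n) = m$, so $\bar{g}$ is a minimizer and $(\bar{z},\bar{g})$ solves \eqref{eq: non-smooth problem}. The only genuinely delicate point is passing to the limit in the non-smooth state inclusion, since weak convergence of controls is a priori not compatible with a subdifferential constraint; this is exactly what Lemma \ref{lemma: convergence of solution operator} resolves, by guaranteeing $\bar{z}=\mathcal{S}(\bar{g})$, so that the limit pair automatically satisfies the state equation. With that lemma available, the remaining coercivity, weak-closedness, and lower-semicontinuity arguments are routine.
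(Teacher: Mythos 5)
Your proposal is correct and follows essentially the same route as the paper: the standard direct method, with coercivity from the Tikhonov term, weak compactness of the minimizing controls, strong convergence of the states via Lemma \ref{lemma: convergence of solution operator}, and weak lower semicontinuity of the objective. The extra detail you supply on the weak closedness of $H^1_\star(I,H)$ is a minor point the paper leaves implicit.
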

\begin{proof}
The proof uses the standard direct method.
Let  $ (g_n,z_n)_n \in H^1_\star(I,H)\times H^1_\star(I,V)$ be a minimizing sequence. In particular, $z_n = \mathcal{S}(g_n)$ holds. Since $j_1,j_2$ are bounded from below, we get that $\half \lVert g_n \rVert^2_{H^1(I,V^*)}$ is bounded, and there exists $g\in H^1_\star(I,H)$ such that $g_n \rightharpoonup g $ after possibly extracting a subsequence.
Lemma \ref{lemma: convergence of solution operator} shows that $z_n \to z =\mathcal{S}(g)$ in $H^1_\star(I,V)$. Since $j_1,j_2$ are assumed to be continuous, and $\lVert \cdot \rVert_{H^1(I,H)}^2$ is  weakly lower semicontinuous, we get $j_1(z_n)\to j_1(z)$, $j_2(z_n(T))\to j_2(z(T))$, $\lVert g \rVert^2_{H^1(I,V^*)} \leq \liminf\limits_{n\to \infty} \lVert g_n \rVert^2_{H^1(I,V^*)}$. Hence
it follows $J(\mathcal{S}(g),g) = J(z,g)\leq \liminf\limits_{n\to \infty} J(z_n,g_n)$, i.e., $(z,g)$ solves the state equation and is (globally) optimal.
\end{proof}

We are interested in proving necessary optimality conditions for the non-smooth optimal control
problem \eqref{eq: non-smooth problem}.

%
%
%

\section{The regularized state equation}
\label{sec3}

In this section we approximate the non-smooth part of the dissipation function and analyze the resulting equation.

\subsection{Smooth approximation of the dissipation} \label{subsection: Approximation of absolute value}

The function $H^1_0(\Omega) \ni v \mapsto \lVert v \rVert_{L^1(\Omega)} $ is non-smooth, which makes the state equation quite uncomfortable to deal with. For this reason we will replace the absolute value function in the $L^1$-norm by a smooth approximation function. The idea how to choose the approximation is from \cite[section 4.1]{SWW2016}.

Let $\rho > 0 $ be a positive parameter and define a family of functions
\begin{equation*}
\vert \cdot \vert_\rho: \; \R \rightarrow \R, \qquad x \to \vert x \vert_\rho
.
\end{equation*}
The family $\{\vert \cdot \vert_\rho \}_{\rho>0}$ should satisfy some properties.

\begin{assumption} \label{assump: smooth approx}
	Let $\rho >0$. We assume for the family $\{\vert \cdot \vert_\rho \}_{\rho>0}$ the following properties.
	\begin{enumerate}
		\item $\vert \cdot \vert_\rho $ is in $\mathcal{C}^2(\R,\R)$.
		\item $\vert \cdot \vert_\rho $ is convex.
		\item $\vert \cdot \vert_\rho$ is an even function, i.e. $\vert v \vert_\rho=\vert -v \vert_\rho$ for all $v\in \R$.
		\item $\vert v \vert_\rho = \vert v \vert$ for all $v\in \R$ with $\vert v \vert \geq \rho$.
		\item $\vert v \vert_\rho'' \leq \frac{2}{\rho}$ for all $v \in \R$.
		\item The second derivatives $\vert \cdot \vert_\rho'' $ are Lipschitz continuous with Lipschitz constant $\frac{2}{\rho^2}$.
		\item $\vert v \vert_{\rho_1} \leq \vert v \vert_{\rho_2}  $ for all $v\in \R$ and $\rho_1 \leq \rho_2$.
		\item For all $\rho_1,\rho_2 > 0$ and $v\in \R$ holds $\big\vert \vert v \vert_{\rho_1}- \vert v \vert_{\rho_2}\big\vert \leq \, \vert \rho_1 - \rho_2 \vert$. \label{assumption, estimate two rhos}
	\end{enumerate}
\end{assumption}

\begin{lemma} \label{lemma: properties approx}
	Let the family $\{ \vert \cdot \vert \}_{\rho >0}$ satisfy Assumption \eqref{assump: smooth approx}. Then it holds for all $v\in \R$:

	\begin{minipage}{\textwidth}
		\begin{enumerate}
			\item $\vert v \vert_\rho' \in [-1,1] $,
			\item $\vert v \vert_\rho'' \geq 0$,
			\item $\vert v \vert \leq \vert v \vert_\rho \leq \vert v \vert + \rho$,
			\item $\vert v \vert_\rho' v \geq \vert v \vert - \rho$,
			\item $\vert v \vert_\rho'' v^2 \leq 2\rho$.
		\end{enumerate}
	\end{minipage}
\end{lemma}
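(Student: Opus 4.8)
The plan is to verify each of the five properties by elementary one-variable calculus, deriving them directly from the eight hypotheses collected in Assumption~\ref{assump: smooth approx}. Since every claim is an inequality in a single real variable $v$, I would not expect any functional-analytic subtlety; the work is purely to track which assumptions feed which conclusion. I would treat the items in an order that lets the later ones reuse the earlier ones, rather than strictly in the order listed.

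First I would establish property~(1), $\vert v\vert_\rho' \in [-1,1]$. The natural argument uses properties~(3), (4), and~(5) of the Assumption: by (4), $\vert\cdot\vert_\rho$ agrees with $\vert\cdot\vert$ for $\vert v\vert\ge\rho$, so its derivative equals $\pm 1$ there; by convexity (2) and evenness (3), the derivative is odd and nondecreasing, hence its values on $[-\rho,\rho]$ are squeezed between $\vert{-\rho}\vert_\rho' = -1$ and $\vert\rho\vert_\rho' = 1$. This gives $\vert v\vert_\rho'\in[-1,1]$ everywhere. Property~(2), $\vert v\vert_\rho''\ge 0$, is then immediate: it is exactly the convexity hypothesis~(2) restated in terms of the second derivative, which exists by the $\mathcal C^2$ assumption~(1).

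For property~(3), the lower bound $\vert v\vert\le\vert v\vert_\rho$ is precisely hypothesis~(3) of the Assumption (the approximation dominates the absolute value), so only the upper bound $\vert v\vert_\rho\le\vert v\vert+\rho$ needs work. Here I would use the telescoping idea of hypothesis~\eqref{assumption, estimate two rhos}: comparing $\vert v\vert_\rho$ with $\vert v\vert_{\rho'}$ as $\rho'\to 0$ and invoking that $\vert v\vert_{\rho'}\to\vert v\vert$, the uniform bound $\big\vert\vert v\vert_{\rho_1}-\vert v\vert_{\rho_2}\big\vert\le\vert\rho_1-\rho_2\vert$ yields $\vert v\vert_\rho - \vert v\vert \le \rho$ in the limit. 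Alternatively, for $\vert v\vert\ge\rho$ equality holds by~(4) and the maximal gap occurs at $v=0$, which can be bounded directly by integrating $\vert\cdot\vert_\rho'$ from $0$ and using $\vert v\vert_\rho'\in[-1,1]$ from property~(1).

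Property~(4), $\vert v\vert_\rho' v\ge\vert v\vert - \rho$, I would obtain from convexity: the subgradient inequality $\vert 0\vert_\rho \ge \vert v\vert_\rho + \vert v\vert_\rho'(0-v)$ rearranges to $\vert v\vert_\rho' v \ge \vert v\vert_\rho - \vert 0\vert_\rho \ge \vert v\vert - \rho$, using the lower bound from property~(3) and that $\vert 0\vert_\rho\le\rho$ (again from~(3) at $v=0$). Finally, property~(5), $\vert v\vert_\rho'' v^2\le 2\rho$, splits by cases: for $\vert v\vert\ge\rho$ the second derivative vanishes by~(4), so the claim is trivial; for $\vert v\vert<\rho$ the curvature bound~(5) of the Assumption gives $\vert v\vert_\rho''\le 2/\rho$, whence $\vert v\vert_\rho'' v^2 \le (2/\rho)\rho^2 = 2\rho$. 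The only genuine obstacle is getting the constants in properties~(3) and~(4) to come out exactly right; everything else is bookkeeping across the case distinction $\vert v\vert\gtrless\rho$.
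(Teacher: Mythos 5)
Your plan is sound, and for items 1, 2, 4, and 5 your arguments are correct and considerably more explicit than the paper's own proof, which disposes of 1 and 2 with the single word ``convexity'', refers to \cite{SWW2016} for 3 and 4, and proves 5 by essentially the computation you give. Your explicit case split between $\vert v\vert\ge\rho$ and $\vert v\vert<\rho$ in item 5 is in fact the right way to make that computation airtight, since the curvature bound $\vert v\vert_\rho''\le 2/\rho$ alone does not control $\vert v\vert_\rho''v^2$ for large $\vert v\vert$; the paper leaves this case distinction implicit.

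The one genuine flaw is in item 3: you assert that the lower bound $\vert v\vert\le\vert v\vert_\rho$ ``is precisely hypothesis (3) of the Assumption (the approximation dominates the absolute value)''. It is not: item 3 of Assumption \ref{assump: smooth approx} is evenness, and no hypothesis of the Assumption states domination. The bound is nevertheless a short consequence of the actual hypotheses, so the gap is fixable in either of two ways. One can combine monotonicity in $\rho$ (item 7) with item 4: for $v\ne 0$ pick $\rho'<\min(\rho,\vert v\vert)$, so that $\vert v\vert_\rho\ge\vert v\vert_{\rho'}=\vert v\vert$, and extend to $v=0$ by continuity. Alternatively, use convexity directly: at $\pm\rho$ one has $\vert\pm\rho\vert_\rho=\rho$ and, by $\mathcal{C}^1$-continuity and item 4, $\vert\pm\rho\vert_\rho'=\pm1$, so the tangent-line inequalities give $\vert v\vert_\rho\ge v$ and $\vert v\vert_\rho\ge-v$ for all $v$, hence $\vert v\vert_\rho\ge\vert v\vert$. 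With that repaired, your derivation of item 4 from the subgradient inequality at $0$ together with $\vert 0\vert_\rho\le\rho$ goes through as written.
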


\begin{proof}
1. and 2. follow immediately from convexity. 3. and 4. can be found in \cite{SWW2016}.
It remains to prove 5.:  Due to $\vert v \vert_\rho'' \leq \frac{2}{\rho^2} $ we have
		$
		\vert v \vert_\rho'' v^2 \leq \frac{2}{\rho} \rho^2 \leq 2\rho.
		$
\end{proof}

A function satisfying Assumption \ref{assump: smooth approx} exists. An example is
\begin{equation}
\vert \cdot \vert_\rho : \R \to \R \qquad v\mapsto
\begin{cases}
\vert v \vert & \vert v \vert \geq \rho, \\
\frac{1}{3}\rho + \frac{1}{\rho^2}v^2(\rho - \frac{1}{3}\vert v \vert) & \vert v \vert \leq \rho.
\end{cases}
\end{equation}


%
%

\subsection{Existence and uniqueness of solutions for the smooth state equation} \label{subsection: Ex and un for smooth state equation}

In this section we are going to modify the dissipation function $\mathcal{D}$ by using the family $\{ \vert \cdot \vert_\rho \}$. This section is oriented on \cite[Section 4.2]{SWW2016}.
Consider the modified dissipation function
\begin{equation}
\mathcal{D}_\rho : H^1_0(\Omega) \to \R, \; \mathcal{D}_\rho(v) := \int_\Omega \vert v \vert_\rho + \frac{\sigma}{2}\vert \nabla v \vert^2 \dx
\end{equation}
for an arbitrary $\rho > 0$. Using this regularized dissipation instead of $\mathcal D$ leads to the inclusion
\begin{equation*}
0 \in \partial \mathcal{D}_\rho(\dot{z}) +\partial_z \mathcal{E}(z,g).
\end{equation*}
The regularized dissipation $\mathcal{D}_\rho$ is differentiable, which means in particular that the inclusion is actually an equation. Furthermore, we require the initial condition $z(0)=0$. We obtain the following regularized state equation

\begin{subequations} \label{eq:state glatt}
	\begin{align}
	\vert \dot{z}(t)\vert'_\rho - \sigma \Delta \dot{z}(t) - \Delta z(t) &= g(t) &&\text{in } V^* \text{ f.a.a.\@ } t\in I, \\
	z(0) &= 0. &&
	\end{align}
\end{subequations}

The first step in analyzing the regularized state equation is to show that for every control $g \in L^2(I,V^*)$ there exists a unique state $z\in H^1_\star(I,V)$, which solves equation \eqref{eq:state glatt}.
Using the substitution $w:=\dot{z}$ we can reformulate equation \eqref{eq:state glatt} in the following way.
\begin{subequations} \label{eq: state glatt 2}
	\begin{align}
	\dot{z} &= w  &&\text{in }  V \text{ a.e.\@ on }I, \\
	-\sigma \Delta w + \vert w \vert'_\rho &= \Delta z + g &&\text{in }  V \text{ a.e.\@ on }I, \\
	z(0)&=0. &&
	\end{align}
\end{subequations}
In order to solve the system \eqref{eq: state glatt 2} we first analyze the equation

\begin{equation} \label{equation: state mon operator}
-\sigma \Delta w + \vert w \vert'_\rho = v \quad \text{in } V^*,
\end{equation}
where $v\in V^*$ is arbitrary. The operator

\begin{equation} \label{equation: def mon operator}
A_\rho : V \to V^*,\; A_\rho(w):= -\sigma \Delta w + \vert w \vert'_\rho
\end{equation}
is strongly monotone and hemi-continuous with

\begin{equation*}
\langle Aw_1 - Aw_2 , w_1 - w_2 \rangle_{V^*,V} \geq \sigma \lVert w_1 - w_2 \rVert_V^2 \qquad \forall \, w_1,w_2 \in V.
\end{equation*}
Hence equation \eqref{equation: state mon operator} is uniquely solvable and its solution operator

\begin{equation*}
T_\rho : V^* \to V,\quad T_\rho(v) := A_\rho^{-1}
\end{equation*}
is Lipschitz continuous with Lipschitz constant $\frac{1}{\sigma}$.


Using the operator $T_\rho$, we can reformulate the regularized state equation as an initial value problem in the Banach space $H^1_0(\Omega)$,

\begin{equation} \label{equation: ODE in H-1}
\begin{cases}
\dot{z}(t) = T_\rho \bigl(g(t) + \Delta z(t)\bigr) &\text{in } H^1_0(\Omega) \quad \text{f.a.a.\@ }t \in I , \\
z(0)= 0   &\text{in } H^1_0(\Omega).
\end{cases}
\end{equation}
This initial value problem is uniquely solvable due to the Lipschitz continuity of $T_\rho$, and the solution operator

	\begin{equation}
	\mathcal{S}_\rho: L^2(I,V^*) \to H^1_\star(I,V), \quad g \mapsto z
	\end{equation}
is continuous, see \cite[Satz 1.3]{GGZ1974}.

\subsection{Differentiability of the solution operator and Lipschitz estimates} \label{section: Diffbarkeit smooth solution operator}

The next step is to prove the Fr\'{e}chet differentiability of the solution operator $\mathcal{S}_\rho$ and to formulate an equation, which is solved by its derivative. This will be important for finding the optimality conditions, as it allows us to  use the reduced functional.

In order to prove differentiability of $\mathcal{S}_\rho$, we first show that $T_\rho$ is differentiable.

\begin{theorem} \label{theorem: Trho diffbar}
	Let $\rho >0$. The operator $T_\rho:V^*\to V$ is Fr\'{e}chet differentiable. Let $v,h \in V^*$ be given  and define $w:= T_\rho(v)$. Let $y\in H^1_0(\Omega)$ be the unique weak solution of the equation
	\begin{equation} \label{equation: derivative solution operator}
	-\sigma \Delta y + \vert w \vert_\rho''y = h \qquad \text{in } V^*.
	\end{equation}
	Then it holds $ T_\rho'(v)h = y $ and
	\begin{equation} \label{equation: continuous dependence of derivative}
	\lVert y \rVert_{V} = \lVert T_\rho'(v)h \rVert_{V} \leq \frac{1}{\sigma} \lVert h \rVert_{V^*}.
	\end{equation}
\end{theorem}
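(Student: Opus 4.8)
The plan is to show that the function $y$ solving the linear equation \eqref{equation: derivative solution operator} is the Fréchet derivative of $T_\rho$ at $v$ in direction $h$. I would begin by checking that this candidate is well defined and already satisfies \eqref{equation: continuous dependence of derivative}. The bilinear form $a(y,\phi):=\sigma\intom\nabla y\cdot\nabla\phi\dx+\intom |w|_\rho''\,y\,\phi\dx$ is bounded on $V\times V$, because $|w|_\rho''\in[0,2/\rho]$ by Assumption \ref{assump: smooth approx} and Lemma \ref{lemma: properties approx}, and it is coercive since the zeroth-order term is nonnegative, so $a(y,y)\ge\sigma\lVert y\rVert_V^2$. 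Hence Lax--Milgram gives a unique $y\in V$ for each $h\in V^*$; as \eqref{equation: derivative solution operator} depends linearly on $h$ for fixed $w$, the map $h\mapsto y$ is linear, and testing with $y$ yields $\sigma\lVert y\rVert_V^2\le a(y,y)=\la h,y\ra_{V^*,V}\le\lVert h\rVert_{V^*}\lVert y\rVert_V$, which is exactly \eqref{equation: continuous dependence of derivative}. Thus $T_\rho'(v)\colon h\mapsto y$ is a bounded linear operator $V^*\to V$ of norm at most $1/\sigma$.

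For differentiability I would fix $v$, abbreviate $w_h:=T_\rho(v+h)$, $w:=T_\rho(v)$, and study the remainder $r_h:=w_h-w-y$. Subtracting the defining equations of $w_h$ and $w$ gives $-\sigma\Delta(w_h-w)+\bigl(|w_h|_\rho'-|w|_\rho'\bigr)=h$; subtracting \eqref{equation: derivative solution operator} for $y$ and using $w_h-w=r_h+y$ then produces
\begin{equation*}
-\sigma\Delta r_h + |w|_\rho''\,r_h = -Q, \qquad Q := |w_h|_\rho' - |w|_\rho' - |w|_\rho''\,(w_h-w),
\end{equation*}
so that $Q$ is precisely the first-order Taylor remainder of $|\cdot|_\rho'$ at $w$. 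By the mean value theorem $Q=\bigl(|\xi|_\rho''-|w|_\rho''\bigr)(w_h-w)$ for some $\xi$ between $w$ and $w_h$; since $|\cdot|_\rho''$ takes values in $[0,2/\rho]$ and is Lipschitz with constant $2/\rho^2$ (Assumption \ref{assump: smooth approx}), this yields the two pointwise bounds $|Q|\le 2\rho^{-1}|w_h-w|$ and $|Q|\le 2\rho^{-2}|w_h-w|^2$. Testing the remainder equation with $r_h$ and discarding the nonnegative zeroth-order term gives
\begin{equation*}
\sigma\lVert r_h\rVert_V^2 \le \intom |Q|\,|r_h|\dx .
\end{equation*}

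The crux, and the only genuine obstacle, is to prove that this right-hand side is $o(\lVert h\rVert_{V^*}^2)$, i.e.\ $\lVert r_h\rVert_V=o(\lVert h\rVert_{V^*})$; the difficulty is the Sobolev integrability of the triple product in $w_h-w$ and $r_h$, whose control depends on the space dimension $d$. I would interpolate the two bounds on $Q$ into $|Q|\le C_\rho\,|w_h-w|^{\,1+\theta}$ with $\theta\in(0,1]$, and apply Hölder's inequality together with the embeddings $V\hookrightarrow L^p(\Omega)$, placing $r_h$ in the critical space and the $(1+\theta)$ copies of $w_h-w$ in an admissible Lebesgue space. This is possible in every dimension by taking $\theta\le 4/(d-2)$ when $d\ge3$ (and $\theta=1$ for $d\le2$), leading to
\begin{equation*}
\sigma\lVert r_h\rVert_V^2 \le C_\rho\,\lVert w_h-w\rVert_V^{\,1+\theta}\,\lVert r_h\rVert_V .
\end{equation*}
Finally, the Lipschitz continuity of $T_\rho$ with constant $1/\sigma$ gives $\lVert w_h-w\rVert_V\le\sigma^{-1}\lVert h\rVert_{V^*}$, so $\lVert r_h\rVert_V\le C\,\lVert h\rVert_{V^*}^{\,1+\theta}=o(\lVert h\rVert_{V^*})$, which is exactly Fréchet differentiability with derivative $y$. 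In low dimensions the plain quadratic bound $|Q|\le\rho^{-2}|w_h-w|^2$ with $V\hookrightarrow L^4(\Omega)$ already suffices (e.g.\ $d\le4$), and the interpolation step is needed only to cover arbitrary $d$.
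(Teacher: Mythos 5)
Your proposal is correct, and its skeleton coincides with the paper's: define $y$ by Lax--Milgram (coercivity constant $\sigma$ since $\vert w\vert_\rho''\ge 0$, giving \eqref{equation: continuous dependence of derivative}), form the remainder $r_h=T_\rho(v+h)-w-y$, and derive the elliptic equation $-\sigma\Delta r_h+\vert w\vert_\rho'' r_h=-Q$ with $Q$ the first-order Taylor remainder of $\vert\cdot\vert_\rho'$. The two arguments diverge only in how $Q$ is shown to be negligible. The paper bounds $\lVert r_h\rVert_V\le\tfrac{1}{\sigma}\lVert Q\rVert_{H}$ and then invokes, as a black box, the Fr\'echet differentiability of the Nemytskij operator induced by $\vert\cdot\vert_\rho'$ from $L^p(\Omega)$ to $H$ for some $p>2$ with $V\hookrightarrow L^p(\Omega)$, which yields $\lVert Q\rVert_H=o(\lVert T_\rho(v+h)-w\rVert_V)=o(\lVert h\rVert_{V^*})$ but no rate. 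You instead make this step explicit and quantitative: the mean value theorem together with the boundedness and Lipschitz continuity of $\vert\cdot\vert_\rho''$ (Assumption \ref{assump: smooth approx}) gives the two pointwise bounds $\vert Q\vert\le 2\rho^{-1}\vert w_h-w\vert$ and $\vert Q\vert\le 2\rho^{-2}\vert w_h-w\vert^2$, whose interpolation $\vert Q\vert\le C_\rho\vert w_h-w\vert^{1+\theta}$ with $0<\theta\le 4/(d-2)$ (resp.\ $\theta=1$ for $d\le2$) fits the Sobolev exponent $V\hookrightarrow L^{2^*}(\Omega)$ in the energy estimate $\sigma\lVert r_h\rVert_V^2\le\intom\vert Q\vert\,\vert r_h\vert\dx$; combined with the Lipschitz continuity of $T_\rho$ this gives $\lVert r_h\rVert_V\le C\lVert h\rVert_{V^*}^{1+\theta}$. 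Your route is self-contained (it does not rely on the cited Nemytskij differentiability result, whose hypotheses the paper states somewhat loosely) and delivers a H\"older rate for the remainder rather than mere $o(\lVert h\rVert_{V^*})$, at the cost of a dimension-dependent exponent bookkeeping; the paper's version is shorter but rests on the external lemma.
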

\begin{proof}
	Equation \eqref{equation: derivative solution operator} is uniquely solvable in $V$
	due to the Lax-Milgram theorem since $\vert w \vert_\rho''\in L^\infty(\Omega)$ is a nonnegative
	coefficient.
	In addition, it holds $\lVert y \rVert_V \leq \frac{1}{\sigma} \lVert h \rVert_{V^*}$.

	In order to show the Fr\'{e}chet differentiability we investigate the remainder $r_h :=  T_\rho(v+h) - w - y$.
	By definition of $T_\rho(v+h)$, $T_\rho(v)$, and $y$ we have
	\begin{align*}
	- \sigma \Delta T_\rho(v+h) + \vert T_\rho(v+h) \vert_\rho' &= v+h &&\text{in }V^*, \\
	- \sigma \Delta w + \vert w \vert_\rho' &= v &&\text{in } V^*, \\
	-\sigma \Delta y + \vert w \vert_\rho''y &= h &&\text{in } V^*.
	\end{align*}
	Subtracting the second and third from the first equation, adding and subtracting $\vert w \vert_\rho''\bigl( T_\rho(v+h) - w \bigr)$ yield
	\begin{equation}
	- \sigma \Delta r_h + \vert w \vert_\rho'' r_h  = -\bigl( \vert T_\rho(v+h) \vert_\rho' - \vert w \vert_\rho' - \vert w \vert_\rho''(T_\rho(v+h) - w) \bigr).
	\end{equation}
	Lax-Milgram implies that $r_h$ is the unique weak solution of this equation, and we get the estimate
	\begin{equation*}
	\lVert r_h \rVert_{V}  \leq \frac{1}{\sigma} \Big\lVert \vert T_\rho(v+h) \vert_\rho' - \vert w \vert_\rho' - \vert w \vert_\rho''(T_\rho(v+h) - w) \Big\rVert_{H} \label{proof: Trho diffbar 1}.
	\end{equation*}

The embedding theorems for Sobolev spaces give us the existence of  $p>2$ such that $V\hookrightarrow L^p(\Omega)$.
Due to the boundedness of $\vert \cdot \vert_\rho'$ the Nemytskij operator of this mapping is Fr\'{e}chet differentiable from $L^p(\Omega)$ to $H$.
This shows
\[
\lVert r_h \rVert_{V} = o\big(\lVert T_\rho(v+h) - w\|_V\big) = o\big(\|h\|_{V^*}\big),
\]
which proves
the Fr\'{e}chet differentiability of $T_\rho$.
\end{proof}

\begin{theorem} \label{theorem: S_rho diffbar}
	Let $\rho >0$, $2\leq p< \infty$, and  $1 \leq q <p$ be given. Then the operator $\mathcal{S}_\rho$ is Fr\'{e}chet differentiable as a mapping from $L^p(I,V^*)$ to $W^{1,q}_\star(I,V)$. For $g,h \in L^p(I,V^*) $ define $z := \mathcal{S}_\rho(g),\; \zeta:= \mathcal{S}'_\rho(g)h$. Then $\zeta$ is the unique solution of the system
	\begin{subequations} \label{equation: Ableitung Lösungsoperator}
		\begin{align}
		\dot{\zeta} &= \omega &&\text{in } V\; \text{a.e.\@ on } I,\\
		-\sigma \Delta \omega + \vert \dot{z}\vert''_\rho \omega &= \Delta \zeta + h &&\text{in } V^*\; \text{a.e.\@ on } I,\\
		\zeta(0)&=0 &&\text{in } V.
		\end{align}
	\end{subequations}

\end{theorem}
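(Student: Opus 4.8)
The plan is to read the linearized system as a single linear initial value problem for $\zeta$ and then to identify $\zeta$ with the Fréchet derivative of $\mathcal{S}_\rho$ by estimating a Taylor remainder. Since $\dot z = T_\rho(g+\Delta z)$, Theorem \ref{theorem: Trho diffbar} lets me rewrite the middle line of \eqref{equation: Ableitung Lösungsoperator} as $\omega = T_\rho'(g+\Delta z)(\Delta\zeta+h)$, so the system collapses to
\[
\dot\zeta(t) = T_\rho'\!\bigl(g(t)+\Delta z(t)\bigr)\bigl(\Delta\zeta(t)+h(t)\bigr),\qquad \zeta(0)=0.
\]
For a.a.\ $t$ the operator $\zeta\mapsto T_\rho'(g+\Delta z)\,\Delta\zeta$ is a bounded linear map on $V$ of norm at most $\|\Delta\|_{V\to V^*}/\sigma=1/\sigma$ by \eqref{equation: continuous dependence of derivative}, depending measurably on $t$, while the inhomogeneity $T_\rho'(g+\Delta z)h$ lies in $L^p(I,V)$ by the same bound. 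Standard existence theory for linear Carathéodory ODEs together with a Gronwall argument then gives a unique $\zeta\in W^{1,p}_\star(I,V)\subset W^{1,q}_\star(I,V)$, establishing well-posedness of \eqref{equation: Ableitung Lösungsoperator}.

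For differentiability I would put $z:=\mathcal{S}_\rho(g)$, $z_h:=\mathcal{S}_\rho(g+h)$, $\delta:=z_h-z$, $r_h:=z_h-z-\zeta$, and write the argument of $T_\rho$ in the $z_h$-equation as $(g+\Delta z)+(h+\Delta\delta)$. Subtracting the governing ODEs and inserting the first-order expansion of $T_\rho$ from Theorem \ref{theorem: Trho diffbar}, the linear terms cancel because $\delta-\zeta=r_h$, leaving
\[
\dot r_h(t)=T_\rho'\!\bigl(g(t)+\Delta z(t)\bigr)\Delta r_h(t)+R_h(t),\qquad r_h(0)=0,
\]
where $R_h(t)$ is exactly the pointwise-in-$t$ remainder of $T_\rho$ at base point $g(t)+\Delta z(t)$ with increment $h(t)+\Delta\delta(t)$. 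The homogeneous part again has norm $\le 1/\sigma$, so Gronwall's inequality yields $\|r_h\|_{\mathcal{C}(\bar I,V)}+\|\dot r_h\|_{L^q(I,V)}\le C\,\|R_h\|_{L^q(I,V)}$. Everything thus reduces to the estimate $\|R_h\|_{L^q(I,V)}=o\bigl(\|h\|_{L^p(I,V^*)}\bigr)$.

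I expect this last estimate to be the main obstacle and the source of the integrability loss $q<p$. The differentiability of $T_\rho$ only delivers a \emph{pointwise-in-$t$} little-$o$ bound whose constant a priori depends on the base point $\dot z(t)$, and a pointwise little-$o$ does not integrate to an integrated little-$o$. To circumvent this I would exploit the Lipschitz continuity of $|\cdot|_\rho''$ (property 6 of Assumption \ref{assump: smooth approx}), which upgrades the Nemytskij remainder of $|\cdot|_\rho'$ to a base-point-uniform, superlinear bound of the form $\|R_h(t)\|_V\le C_\rho\,\|T_\rho(v+k)(t)-\dot z(t)\|_{L^{2(1+\theta)}(\Omega)}^{1+\theta}$ for a suitable interpolation exponent $\theta\in(0,1]$, obtained by interpolating the quadratic bound from the Lipschitz second derivative against the linear bound $|R_h|\le \tfrac{4}{\rho}|T_\rho(v+k)-\dot z|$.

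Combining this with the Sobolev embedding $V\hookrightarrow L^p(\Omega)$ (choosing $\theta$ so that $2(1+\theta)\le p$), the contraction estimate $\|T_\rho(v+k)-\dot z\|_V\le\sigma^{-1}\|k\|_{V^*}$ with $k=h+\Delta\delta$, and the Lipschitz bound $\|\delta\|_{L^p(I,V)}\le C\|h\|_{L^p(I,V^*)}$ for $\mathcal{S}_\rho$ (itself a Gronwall consequence of the $z_h$- and $z$-equations), a Hölder argument in $t$ with $q(1+\theta)=p$ bounds $\|R_h\|_{L^q(I,V)}$ by $C\|h\|_{L^p(I,V^*)}^{1+\theta}$; smaller values of $q$ follow from $L^q(I)\hookrightarrow L^{q'}(I)$ on the bounded interval. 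Since $1+\theta>1$, this is the required $o\bigl(\|h\|_{L^p(I,V^*)}\bigr)$, and $r_h/\|h\|\to 0$ in $W^{1,q}_\star(I,V)$ completes the identification $\mathcal{S}_\rho'(g)h=\zeta$.
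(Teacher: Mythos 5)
Your proposal is correct and follows essentially the route the paper intends, namely the one it delegates to \cite[Theorem 4.4]{SWW2016}: rewrite the linearized system as the Carath\'eodory ODE $\dot\zeta=T_\rho'(g+\Delta z)(\Delta\zeta+h)$, reduce everything via Gronwall to an integrated remainder estimate for $T_\rho$, and obtain the superlinear bound $\lVert R_h\rVert_{L^q(I,V)}\le C\lVert h\rVert_{L^p(I,V^*)}^{1+\theta}$ by interpolating the quadratic Taylor bound from the Lipschitz continuity of $\vert\cdot\vert_\rho''$ against the linear Lipschitz bound of $\vert\cdot\vert_\rho'$, which correctly explains the integrability gap $q<p$. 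The only blemish is notational: the exponent in the Sobolev embedding $V\hookrightarrow L^{p}(\Omega)$ is a dimension-dependent quantity unrelated to the temporal exponent $p$ of the theorem, so $\theta$ should be chosen as the minimum of the two constraints $2(1+\theta)\le p_{\mathrm{Sob}}$ and $q(1+\theta)\le p$ rather than by forcing $q(1+\theta)=p$.
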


\begin{proof}
This can be proven following the lines of the proof of
\cite[Theorem 4.4]{SWW2016}.
\end{proof}

Later we will consider controls $g$ in the space $H^1_\star(I,H)$. In this case we have the following result.

\begin{corollary}
	Let $g\in H^1_\star(I,H)$ and $1\leq q < \infty$. Then $S_\rho$ is Fr\'{e}chet differentiable as a mapping from $H^1_\star(I,H)$ to $W^{1,q}_\star(I,V)$.
\end{corollary}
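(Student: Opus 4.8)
The plan is to reduce the corollary to the already-established Theorem~\ref{theorem: S_rho diffbar} by exhibiting a continuous embedding that lets us compose differentiable maps. The key observation is that the corollary differs from the theorem only in the choice of the domain space: the theorem gives Fr\'echet differentiability of $\mathcal{S}_\rho$ as a map from $L^p(I,V^*)$ to $W^{1,q}_\star(I,V)$ for any $2\le p<\infty$ and $1\le q<p$, whereas now we wish to treat $\mathcal{S}_\rho$ as a map defined on the smaller space $H^1_\star(I,H)$. The natural strategy is therefore to show that $H^1_\star(I,H)$ embeds continuously (indeed, boundedly and linearly) into some $L^p(I,V^*)$ with $p$ large enough that the desired range exponent $q$ satisfies $q<p$.

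First I would establish the embedding chain. Since $H\cong H^*\hookrightarrow V^*$ is continuous by the Hilbert-triple structure stated in the notation section, we have $H^1(I,H)\hookrightarrow H^1(I,V^*)\hookrightarrow L^2(I,V^*)$, and moreover $H^1(I,V^*)\hookrightarrow C(\bar I,V^*)\hookrightarrow L^p(I,V^*)$ for every $p\in[2,\infty)$ because $I$ is a bounded interval. Thus for any given $q$ with $1\le q<\infty$ I can choose $p>\max\{2,q\}$ so that the hypotheses $2\le p<\infty$ and $1\le q<p$ of Theorem~\ref{theorem: S_rho diffbar} are met, and the inclusion map $\iota\colon H^1_\star(I,H)\to L^p(I,V^*)$ is a bounded linear operator respecting the zero-initial-value condition. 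Being linear and bounded, $\iota$ is itself Fr\'echet differentiable with $\iota'=\iota$.

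Next I would apply the chain rule: $\mathcal{S}_\rho|_{H^1_\star(I,H)} = \mathcal{S}_\rho\circ\iota$, where $\mathcal{S}_\rho$ on the right is the map from $L^p(I,V^*)$ to $W^{1,q}_\star(I,V)$ shown differentiable in Theorem~\ref{theorem: S_rho diffbar}. The composition of the Fr\'echet-differentiable map $\mathcal{S}_\rho$ with the bounded linear $\iota$ is Fr\'echet differentiable from $H^1_\star(I,H)$ to $W^{1,q}_\star(I,V)$, with derivative $\big(\mathcal{S}_\rho|_{H^1_\star(I,H)}\big)'(g)h = \mathcal{S}'_\rho(\iota g)(\iota h)$; in particular the derivative solves the same linearized system \eqref{equation: Ableitung Lösungsoperator} with the right-hand side $h$ now viewed as an element of $V^*$.

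The step requiring the most care is verifying that the range exponent $q$ may be taken arbitrary in $[1,\infty)$ rather than constrained by $q<p$: here the point is that $p$ is ours to choose, so for any prescribed $q$ we simply take $p$ strictly larger, and the constraint never binds. I expect no genuine obstacle, since the construction is a routine embedding-plus-chain-rule argument; the only subtlety worth a sentence in the write-up is confirming that the embeddings preserve the $\star$ conditions (zero initial data), which holds because continuity of the embeddings $H^1(I,H)\hookrightarrow C(\bar I,H)$ and $H^1(I,V^*)\hookrightarrow C(\bar I,V^*)$ makes point evaluation at $t=0$ well-defined and compatible across the spaces. I would conclude by noting that this is precisely the statement of the corollary.
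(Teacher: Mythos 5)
Your argument is correct and is essentially the paper's own proof: the paper likewise chooses $p$ with $q<p<\infty$, uses the continuous embeddings $H^1_\star(I,H)\hookrightarrow \mathcal{C}(\bar I,H)\hookrightarrow L^p(I,V^*)$, and then invokes Theorem \ref{theorem: S_rho diffbar}. Your additional remarks on the chain rule with the bounded linear inclusion and on preservation of the zero-initial-value condition only make explicit what the paper leaves implicit.
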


\begin{proof}
	Choose $p$ such that $q < p < \infty$ holds. The following embeddings are continuous.
	\begin{equation*}
	H^1_\star(I,H)\hookrightarrow \mathcal{C}(\bar{I},H) \hookrightarrow L^p(I,V^*).
	\end{equation*}
	Therefore the claim follows from Theorem \eqref{theorem: S_rho diffbar}.
\end{proof}

In the next lemma we show a Lipschitz property for $S_\rho$. This lemma is a stronger version of \cite[Lemma 4.5]{SWW2016}.

\begin{lemma} \label{lemma: Lipschitz smooth solution operator}
	Let $2\leq p \leq \infty$ and $g_1,g_2 \in L^p(I,V^*)$ be given. For $i=1,2$ define $z_i:= \mathcal{S}_\rho(g_i)$. Then it holds $z_i \in W^{1,p}_\star(I,V)$.
	In addition, we have for almost all $t\in I$

	\begin{equation}
	\lVert \dot{z}_1(t) - \dot{z}_2(t) \rVert_{V} \leq \frac{1}{\sigma} \big\lVert g_1(t) - g_2(t) \big\rVert_{V^*} + \frac{1}{\sigma^2}e^{\frac{1}{\sigma}t}\big\lVert g_1 -g_2 \big\rVert_{L^1(0,t;V^*)}. \label{eq: Lipschitz Srho}
	\end{equation}

\end{lemma}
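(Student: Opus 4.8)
The plan is to exploit the fixed-point reformulation \eqref{equation: ODE in H-1}, that is $\dot z_i(t) = T_\rho\bigl(g_i(t) + \Delta z_i(t)\bigr)$ with $z_i(0)=0$, and to turn the Lipschitz continuity of $T_\rho$ into a Gronwall inequality for the quantity $\phi(t) := \lVert \dot z_1(t) - \dot z_2(t)\rVert_V$. First I would record the baseline regularity: since $I$ is bounded we have $L^p(I,V^*)\hookrightarrow L^2(I,V^*)$ for $p\geq 2$, so $g_i\in L^2(I,V^*)$, and the solution operator $\mathcal{S}_\rho$ from Section \ref{subsection: Ex and un for smooth state equation} yields $z_i\in H^1_\star(I,V)$. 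In particular $\phi\in L^2(I)\subset L^1(I)$, which is all the integrability the subsequent Gronwall step requires (so no circularity with the claimed $W^{1,p}$-regularity arises).

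Next, for almost all $t\in I$ I would apply the Lipschitz bound $\lVert T_\rho(v_1)-T_\rho(v_2)\rVert_V \leq \frac{1}{\sigma}\lVert v_1-v_2\rVert_{V^*}$ with $v_i = g_i(t)+\Delta z_i(t)$. Since $\langle \Delta u,v\rangle_{V^*,V} = -(u,v)_V$, the operator $\Delta\colon V\to V^*$ is an isometry, whence $\lVert \Delta z_1(t)-\Delta z_2(t)\rVert_{V^*} = \lVert z_1(t)-z_2(t)\rVert_V$. Combining this with $z_i(t)=\int_0^t \dot z_i(s)\ds$ (valid because $z_i(0)=0$) gives, for a.a.\ $t$,
\[
\phi(t) \leq \frac{1}{\sigma}\lVert g_1(t)-g_2(t)\rVert_{V^*} + \frac{1}{\sigma}\int_0^t \phi(s)\ds .
\]

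The third step is a generalized Gronwall argument with the non-constant source $a(t):=\frac{1}{\sigma}\lVert g_1(t)-g_2(t)\rVert_{V^*}$. Setting $\psi(t):=\int_0^t\phi(s)\ds$, the inequality reads $\psi'(t)\leq a(t)+\frac{1}{\sigma}\psi(t)$ a.e., so $\bigl(e^{-t/\sigma}\psi\bigr)'\leq a(t)e^{-t/\sigma}$; integrating from $0$ and using $\psi(0)=0$ yields $\psi(t)\leq e^{t/\sigma}\int_0^t a(s)\ds$. Re-inserting this into $\phi(t)\leq a(t)+\frac{1}{\sigma}\psi(t)$ reproduces exactly \eqref{eq: Lipschitz Srho}. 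The $W^{1,p}_\star$-regularity then follows by specializing the estimate to $g_2=0$: because $\vert\cdot\vert_\rho$ is even and $C^2$ we have $\vert 0\vert'_\rho=0$, hence $T_\rho(0)=0$ and $\mathcal{S}_\rho(0)=0$, so $z_2=0$ and $\dot z_2=0$. Thus \eqref{eq: Lipschitz Srho} bounds $\lVert\dot z_1(t)\rVert_V$ by $\frac{1}{\sigma}\lVert g_1(t)\rVert_{V^*}$ plus $\frac{1}{\sigma^2}e^{t/\sigma}\lVert g_1\rVert_{L^1(0,t;V^*)}$; the first term lies in $L^p(I)$ since $g_1\in L^p(I,V^*)$, while the second is dominated on the bounded interval $I$ by the constant $\frac{1}{\sigma^2}e^{T/\sigma}\lVert g_1\rVert_{L^1(I;V^*)}$, hence lies in $L^\infty(I)\subset L^p(I)$, covering also $p=\infty$. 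Therefore $\dot z_1\in L^p(I,V)$, i.e.\ $z_1\in W^{1,p}_\star(I,V)$.

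I expect the only genuinely delicate point to be the Gronwall step: one must keep the source $a(t)$ \emph{outside} the exponential, so as to reproduce the precise split in \eqref{eq: Lipschitz Srho}, rather than absorbing everything into a single $e^{t/\sigma}$ factor. The regularity bootstrap is then routine, the one thing to check being that the $L^1$-in-time term is uniformly bounded and so survives the passage to $p=\infty$.
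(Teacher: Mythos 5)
Your proposal is correct and follows essentially the same route as the paper: both exploit the Lipschitz continuity of $T_\rho$ in the fixed-point form $\dot z_i = T_\rho(g_i+\Delta z_i)$, reduce to a Gronwall inequality (the paper runs Gronwall on $\lVert z_1(t)-z_2(t)\rVert_V$ after integrating the ODE, you run it on $\psi(t)=\int_0^t\lVert\dot z_1-\dot z_2\rVert_V\,\mathrm ds$, which is the same estimate), and obtain the $W^{1,p}_\star$-regularity by setting $g_2=0$. Your added care about the a priori integrability of $\phi$ and the verification that $\mathcal S_\rho(0)=0$ are sound and slightly more explicit than the paper's proof.
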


\begin{proof}

We obtain with the Lipschitz continuity  of $T_\rho$ for a.a. $t\in I$

	\begin{align}
	\lVert  \dot{z}_1(t) &- \dot{z}_2(t)  \rVert_{V} \notag
	= \lVert  T_\rho(g_1(t) + \Delta z_1(t)) - T_\rho(g_2(t)+ \Delta z_2(t))   \rVert_{V} \notag \\
	&\leq \frac{1}{\sigma} \lVert  g_1(t) - g_2(t)   \rVert_{V^*} + \frac{1}{\sigma} \lVert  z_1(t) -  z_2(t)   \rVert_{V} \label{eq: proof Lipschitz Srho 1}
	.
	\end{align}
By integrating \eqref{equation: ODE in H-1} from $0$ to $t$, we obtain
	\begin{align*}
	\lVert z_1(t) -  z_2(t)   \rVert_{V}
	&=\Big\lVert \int\limits_{0}^{t} T_\rho(g_1(s)+\Delta z_1(s) ) - T_\rho(g_2(s)+\Delta z_2(s) )\ds \Big\rVert_{V}  \\
	&\leq  \frac{1}{\sigma} \Big\lVert  g_1 - g_2 \Big\rVert_{L^1(0,t;V^*)} + \frac{1}{\sigma} \int\limits_{0}^{t} \Big\lVert z_1(s) - z_2(s) \Big\rVert_{V} \ds
	\end{align*}
We apply Gronwall's inequality and obtain

	\begin{equation}
	\lVert z_1(t) - z_2(t) \rVert_{V} \leq  \frac{1}{\sigma^2}e^{\frac{1}{\sigma}t}\big\lVert g_1 -g_2 \big\rVert_{L^1(0,t;V^*)}. \label{eq: proof Lipschitz Srho 2}
	\end{equation}
	Combining \eqref{eq: proof Lipschitz Srho 1} and \eqref{eq: proof Lipschitz Srho 2} we get the asserted inequality.
	Moreover, choosing $g_2=0$ in \eqref{eq: Lipschitz Srho} gives

	\begin{equation*}
	\lVert \dot{z}_1(t)\rVert_{V} \leq \frac{1}{\sigma} \big\lVert g_1(t)  \big\rVert_{V^*} + \frac{1}{\sigma^2}e^{\frac{1}{\sigma}T}\big\lVert g_1  \big\rVert_{L^1(I,V^*)},
	\end{equation*}
hence $z_1 \in W^{1,p}_\star(I,V)$ holds.
\end{proof}

\subsection{Higher regularity of the state and a-priori estimates}

Let us now prove some a-priori estimates for the state $z$ of the regularized equation.
We will also prove higher regularity results for the state $z$ in space and time under some assumptions on the domain $\Omega$ and on the control $g$. The next lemma is from \cite[Lemma 4.6]{SWW2016}.

\begin{lemma} \label{lemma: H2 time}
	Let $\rho > 0$ and $g\in H^1_\star(I,V^*)$ be given. Define $z:= \mathcal{S}_\rho(g)$. Then it holds $z \in H^2(I,V)$ and

	\begin{equation*}
	\lVert \ddot{z}(t) \rVert_{V} \leq \frac{1}{\sigma} \big\lVert \dot{g}(t) + \Delta \dot{z}(t) \big\rVert_{V^*}\qquad \text{a.e.\@ on } I.
	\end{equation*}
Moreover,
there is a constant $C>0$ independent of $\sigma$, $\rho$ such that
	\begin{equation}
	\lVert \dot{z}(T) \rVert_{V}^2  + \lVert \dot{z} \rVert_{L^2(I,V)}^2
	\leq C\cdot \Bigl[\bigl(\frac{2\rho}{\sigma} + 1\bigr)\cdot \vert \Omega \vert + \frac{1}{\sigma} \lVert \dot{g} \rVert_{L^2(I,V^*)}^2 \Bigr]
	\end{equation}
	and
	\begin{equation}
	\lVert \dot{z}(0)\rVert_{V} \leq \frac{\rho}{\sigma} \vert \Omega \vert. \label{eq: estimate dotz_0}
	\end{equation}
are satisfied.
\end{lemma}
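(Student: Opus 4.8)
The plan is to establish the three assertions in the order stated: first the $H^2$-regularity of $z$ together with the pointwise bound on $\ddot z$, then the energy estimate, and finally the bound at $t=0$, which is almost immediate and in fact feeds back into the energy estimate.

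First I would work from the fixed-point reformulation \eqref{equation: ODE in H-1}, that is $\dot z(t) = T_\rho\bigl(g(t) + \Delta z(t)\bigr)$, together with the Lipschitz constant $\frac1\sigma$ of $T_\rho$. Since $g\in H^1_\star(I,V^*)$ and $z=\mathcal{S}_\rho(g)\in H^1_\star(I,V)$, the argument $v := g + \Delta z$ lies in $H^1(I,V^*)$ (note $\Delta\colon V\to V^*$ is bounded with norm $\leq 1$). Forming the time difference quotients and using the Lipschitz bound gives $\lVert h^{-1}(\dot z(t+h)-\dot z(t))\rVert_V \leq \frac1\sigma\lVert h^{-1}(v(t+h)-v(t))\rVert_{V^*}$; as the difference quotients of $v\in H^1(I,V^*)$ are bounded in $L^2(I,V^*)$ by $\lVert\dot v\rVert_{L^2(I,V^*)}+o(1)$, those of $\dot z$ are bounded in $L^2(I,V)$, whence $\dot z\in H^1(I,V)$, i.e.\ $z\in H^2(I,V)$. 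Passing to the limit $h\to 0$ in the pointwise inequality yields $\lVert\ddot z(t)\rVert_V \leq \frac1\sigma\lVert\dot g(t)+\Delta\dot z(t)\rVert_{V^*}$ a.e. Equivalently, one may differentiate $\dot z = T_\rho(v)$ by the chain rule, invoking the Fréchet differentiability of $T_\rho$ from Theorem \ref{theorem: Trho diffbar}, to get $\ddot z(t) = T_\rho'(v(t))\dot v(t)$ and the same bound directly from \eqref{equation: continuous dependence of derivative}.

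With $z\in H^2(I,V)$ in hand I would differentiate the state equation \eqref{eq:state glatt} in time. Since $\vert\cdot\vert_\rho\in\mathcal{C}^2(\R,\R)$ with bounded, Lipschitz second derivative, the map $t\mapsto\vert\dot z(t)\vert_\rho'$ is differentiable with $\frac{\mathrm d}{\mathrm dt}\vert\dot z\vert_\rho' = \vert\dot z\vert_\rho''\,\ddot z$, so that $\vert\dot z\vert_\rho''\,\ddot z - \sigma\Delta\ddot z - \Delta\dot z = \dot g$ in $V^*$ for a.a.\ $t$. Testing with $\ddot z(t)\in V$ gives $\int_\Omega\vert\dot z\vert_\rho''\,\ddot z^2\dx + \sigma\lVert\ddot z\rVert_V^2 + \frac12\frac{\mathrm d}{\mathrm dt}\lVert\dot z\rVert_V^2 = \langle\dot g,\ddot z\rangle_{V^*,V}$. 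The first integrand is nonnegative by Lemma \ref{lemma: properties approx}, so after the Young inequality $\langle\dot g,\ddot z\rangle\leq\frac\sigma2\lVert\ddot z\rVert_V^2 + \frac1{2\sigma}\lVert\dot g\rVert_{V^*}^2$ and integration from $0$ to $t$ I obtain $\frac12\lVert\dot z(t)\rVert_V^2 \leq \frac12\lVert\dot z(0)\rVert_V^2 + \frac1{2\sigma}\lVert\dot g\rVert_{L^2(I,V^*)}^2$ for every $t\in\bar I$. Choosing $t=T$ bounds $\lVert\dot z(T)\rVert_V^2$, while integrating the same inequality over $I$ bounds $\lVert\dot z\rVert_{L^2(I,V)}^2 \leq T\bigl(\lVert\dot z(0)\rVert_V^2 + \frac1\sigma\lVert\dot g\rVert_{L^2(I,V^*)}^2\bigr)$. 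Summing and inserting the bound on $\lVert\dot z(0)\rVert_V^2$ from the next step yields the assertion with $C=1+T$, which is indeed independent of $\sigma$ and $\rho$.

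For the bound at $t=0$ I would use that the identity $\dot z(0) = T_\rho\bigl(g(0)+\Delta z(0)\bigr)$ holds, as both sides of \eqref{equation: ODE in H-1} are continuous once $z\in H^2(I,V)\hookrightarrow\mathcal{C}^1(\bar I,V)$; since $g\in H^1_\star$ and $z(0)=0$ force $g(0)=0$ and $\Delta z(0)=0$, the value $\dot z(0)=T_\rho(0)$ solves $-\sigma\Delta\dot z(0)+\vert\dot z(0)\vert_\rho'=0$ in $V^*$. Testing with $\dot z(0)$ and using $\vert v\vert_\rho'v\geq\vert v\vert-\rho$ from Lemma \ref{lemma: properties approx} gives $\sigma\lVert\dot z(0)\rVert_V^2\leq\rho\vert\Omega\vert$, hence $\lVert\dot z(0)\rVert_V^2\leq\frac\rho\sigma\vert\Omega\vert$, which is the inequality used above and implies the stated bound (in fact, since $\vert v\vert_\rho'v\geq 0$, the unique solution is $\dot z(0)=0$). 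I expect the only genuinely delicate point to be the $H^2$-regularity of the first step: because the argument of the nonlinearity contains the highest spatial derivative through $\Delta z$ and $T_\rho$ is a priori only Lipschitz, one must argue carefully—via difference quotients, or via the chain rule justified by Theorem \ref{theorem: Trho diffbar}—that the time derivative survives differentiation and lands in $L^2(I,V)$; once this is secured, the remaining energy computations are routine testing arguments.
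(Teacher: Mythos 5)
Your argument is correct and, where the paper simply points to \cite[Lemma 4.6]{SWW2016}, it supplies a complete self\-/contained proof along what is clearly the intended route: the $H^2(I,V)$ regularity and the pointwise bound on $\ddot z$ via difference quotients and the $\tfrac1\sigma$\-/Lipschitz continuity of $T_\rho$, and the energy estimate by differentiating the state equation in time and testing with $\ddot z$, using $\vert\cdot\vert_\rho''\ge 0$ and Young's inequality. Two small remarks. First, when dropping the term $\int_\Omega\vert\dot z\vert_\rho''\,\ddot z^2\dx$ you implicitly rely on the chain rule $\frac{\d}{\d t}\vert\dot z\vert_\rho'=\vert\dot z\vert_\rho''\ddot z$ for the Nemytskij operator; this is fine because $\vert\cdot\vert_\rho''$ is bounded and Lipschitz, but you could avoid it entirely by working with the difference of the equations at $t+h$ and $t$ and invoking only the monotonicity of $\vert\cdot\vert_\rho'$. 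Second, in the last step you derive $\sigma\lVert\dot z(0)\rVert_V^2\le\rho\vert\Omega\vert$, which bounds the \emph{square} of the norm and does not by itself yield the stated $\lVert\dot z(0)\rVert_V\le\frac{\rho}{\sigma}\vert\Omega\vert$; your parenthetical observation that $\vert v\vert_\rho'v\ge0$ forces $T_\rho(0)=0$ and hence $\dot z(0)=0$ repairs this (and in fact makes \eqref{eq: estimate dotz_0} and the $\bigl(\frac{2\rho}{\sigma}+1\bigr)\vert\Omega\vert$ term in the energy estimate superfluous for the case $g\in H^1_\star(I,V^*)$ treated here). So your bound is actually sharper than the one stated in the lemma, which is presumably inherited verbatim from the reference.
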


\begin{proof}
The proof is the same as  \cite[Proof of Lemma 4.6]{SWW2016}, except that we have $\sigma \Delta \dot{z}$ in \eqref{eq:state glatt} instead of $\rho \Delta \dot{z}$.
\end{proof}

We now turn our focus on regularity results in space. In order to prove higher regularity in space for the state $z$ we need to assume higher regularity in space for the control $g$, i.e. $g\in H^1_\star(I,H)$. We first show that for a fixed $t\in I$ the function $\dot{z}(t)$ solves an elliptic PDE.

\begin{lemma}\label{lemma: regularity space}

	Let $\rho >0$ and $g \in H^1_\star(I,H)$. Let further $z := \mathcal{S}_\rho g \in H^1_\star(I,V)$ be the unique solution of the regularized state equation \eqref{eq:state glatt}.  Then
	it holds $\Delta z(t),\Delta \dot{z}(t) \in H$ f.a.a. $t\in I$.
	In addition, we have the estimates
	\begin{align}
	\lVert \Delta z \rVert_{C(\bar{I},H)} &\leq \sqrt{\frac{1}{2\sigma}}  \big\lVert g - \vert \dot{z} \vert_\rho'  \big\rVert_{L^2(I,H)}, \label{estimate regularity space 1}  \\
	\lVert \Delta \dot{z} \rVert_{L^2(I,H)} &\leq \frac{1}{\sigma} \big\lVert g - \vert \dot{z} \vert_\rho' \big\rVert_{L^2(I,H)}. \label{estimate regularity space 2}
	\end{align}
\end{lemma}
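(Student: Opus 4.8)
The plan is to read the regularized state equation \eqref{eq:state glatt} as a first-order linear ODE for $\phi := -\Delta z$ with values in $V^*$, to solve it explicitly by a Duhamel formula, and then to observe that the resulting mild solution is in fact $H$-valued. Rewriting \eqref{eq:state glatt} as
\[
-\sigma\Delta\dot z(t) - \Delta z(t) = g(t) - \vert \dot z(t)\vert_\rho' =: f(t),
\]
I first note that $f \in L^2(I,H)$: indeed $g\in H^1_\star(I,H)\subset L^2(I,H)$, while $\vert\dot z(t)\vert_\rho'$ takes values in $[-1,1]$ pointwise by Lemma \ref{lemma: properties approx}, so $\lVert \vert\dot z(t)\vert_\rho'\rVert_H \leq \vert\Omega\vert^{1/2}$, and measurability in $t$ follows from the continuity of the Nemytskij operator $v\mapsto \vert v\vert_\rho'$ on $H$. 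Setting $\phi := -\Delta z$, which lies in $H^1(I,V^*)$ since $z\in H^1_\star(I,V)$ (so that $\dot\phi = -\Delta\dot z$), the equation becomes the linear ODE $\sigma\dot\phi + \phi = f$ in $V^*$ together with $\phi(0) = -\Delta z(0) = 0$.

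Next I would introduce the candidate mild solution
\[
\psi(t) := \frac{1}{\sigma}\intg{0}{t} e^{-(t-s)/\sigma} f(s)\ds.
\]
Since $f\in L^2(I,H)$, standard properties of this convolution give $\psi\in C(\bar I,H)\cap H^1(I,H)$ with $\sigma\dot\psi + \psi = f$ in $H$ and $\psi(0)=0$. Both $\phi$ and $\psi$ solve the same linear ODE in $V^*$ with the same initial value, and their difference $\chi$ satisfies $\sigma\dot\chi+\chi = 0$, $\chi(0)=0$ in $V^*$; multiplying by the integrating factor $e^{t/\sigma}$ (a one-line Gronwall argument) shows $\chi\equiv 0$, so $-\Delta z = \phi = \psi$. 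In particular $\Delta z = -\psi \in C(\bar I, H)$ and $\Delta\dot z = -\dot\psi\in L^2(I,H)$, which already proves $\Delta z(t),\Delta\dot z(t)\in H$ for (almost) all $t\in I$.

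It remains to establish the two quantitative bounds. For \eqref{estimate regularity space 1} I would estimate $\psi$ directly: by the triangle inequality and Cauchy--Schwarz,
\[
\lVert \psi(t)\rVert_H \leq \frac1\sigma\Bigl(\intg{0}{t} e^{-2(t-s)/\sigma}\ds\Bigr)^{1/2}\lVert f\rVert_{L^2(0,t;H)} \leq \frac{1}{\sqrt{2\sigma}}\lVert f\rVert_{L^2(I,H)},
\]
using $\intg{0}{t} e^{-2(t-s)/\sigma}\ds = \frac{\sigma}{2}(1-e^{-2t/\sigma})\leq \frac{\sigma}{2}$; taking the supremum over $t$ yields \eqref{estimate regularity space 1}. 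For \eqref{estimate regularity space 2}, now that $\psi\in H^1(I,H)$ is available, I test $\sigma\dot\psi+\psi=f$ with $\dot\psi$ in $H$, giving $\sigma\lVert\dot\psi\rVert_H^2 + \half\frac{d}{dt}\lVert\psi\rVert_H^2 = (f,\dot\psi)_H$; Young's inequality and integration over $I$ with $\psi(0)=0$, after discarding the nonnegative term $\half\lVert\psi(T)\rVert_H^2$, produce $\lVert\dot\psi\rVert_{L^2(I,H)}\leq \frac1\sigma\lVert f\rVert_{L^2(I,H)}$, which is exactly \eqref{estimate regularity space 2}.

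The main obstacle is the rigorous bootstrap from the a priori $V^*$-valued regularity of $\Delta z$ to $H$-valued regularity: testing the equation directly with $-\Delta\dot z$ is only formal, since it presupposes the very regularity to be proven, and it moreover yields the suboptimal constant $\sqrt{1/\sigma}$ for the $\Delta z$-bound. Passing through the explicit Duhamel representation together with ODE uniqueness is what both legitimizes the computation and delivers the sharp constant $\sqrt{1/(2\sigma)}$ in \eqref{estimate regularity space 1}.
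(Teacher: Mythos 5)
Your proof is correct and follows essentially the same route as the paper: both identify $-\Delta z$ with the solution of the scalar linear ODE $\sigma\dot v + v = g - \vert\dot z\vert_\rho'$, construct an $H$-valued solution of that ODE, and transfer the regularity to $\Delta z$ via uniqueness of the initial value problem in $V^*$, arriving at the same constants. The only cosmetic differences are that you build the auxiliary solution explicitly by a Duhamel formula (the paper cites an abstract existence result) and obtain the $C(\bar I,H)$-bound from a direct Cauchy--Schwarz estimate on the convolution rather than from the cross term $\frac{1}{\sigma}\int_0^t (v,\dot v)_H\ds = \frac{1}{2\sigma}\lVert v(t)\rVert_H^2$ in the energy identity, which in fact also yields the sharp constant $\sqrt{1/(2\sigma)}$.
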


\begin{proof}
Let us set $f:= \frac{1}{\sigma}\bigl( g - \vert \dot{z} \vert_\rho'  \bigr)$, hence  $f \in L^2(I,H)$.
The initial value problem
	\begin{equation} \label{proof regularity space1}
	\begin{cases}
	\dot v(t) + \frac{1}{\sigma}v(t) = f(t) &\text{in } H\;\text{for a.a.}\; t\in I, \\
	v(0) = 0  &\text{in } H
	\end{cases}
	\end{equation}
	has a unique solution $v \in H^1(I,H) $ due to \cite[Satz 1.3]{GGZ1974}.
We multiply \eqref{proof regularity space1} with $\dot{v}(t)$ and integrate over $(0,t)\times \Omega$. This yields
	\begin{equation*}
	\int\limits_{0}^{t} \bigl(\dot{v}(t) , \dot{v}(t)  \bigr)_H\dt + \frac{1}{\sigma} \int\limits_{0}^{t} \bigl(v(t) , \dot{v}(t)  \bigr)_{H}\dt = \int\limits_{0}^{t} \bigl(f(t) , \dot{v}(t)  \bigr)_{H}\dt
	\leq \lVert f \rVert_{L^2(I,H)} \cdot \lVert \dot{v} \rVert_{L^2(0,t;H)}.
	\end{equation*}
	Therefore we obtain
	\begin{equation*}
	\lVert \dot{v} \rVert_{L^2(I,H)}^2 + \frac{1}{2\sigma} \lVert v \rVert_{C(\bar{I},H)}^2 \leq  \lVert f \rVert_{L^2(I,H)} \cdot \lVert \dot{v} \rVert_{L^2(I,H)} \leq \frac{1}{4} \lVert f \rVert_{L^2(I,H)}^2 + \lVert \dot{v}\rVert_{L^2(I,H)},
	\end{equation*}
which implies the two inequalities
	\begin{align}
	\lVert \dot{v} \rVert_{L^2(I,H)} &\leq \lVert f \rVert_{L^2(I,H)}, \label{proof regularity space2} \\
	\lVert v \rVert_{C(\bar{I},H)} &\leq \sqrt{\frac{\sigma}{2}}  \lVert f \rVert_{L^2(I,H)}. \label{proof regularity space3}
	\end{align}
By construction of $f$, we have

	\begin{equation}
	\begin{cases}
	- \Delta \dot{z}(t) - \frac{1}{\sigma}\Delta z(t) = f(t) &\text{in } V^*\;\text{for a.a.}\; t\in I, \\
	-\Delta z(0) = 0  &\text{in } V^*.
	\end{cases}
	\end{equation}
Since this initial value problem is uniquely solvable, it follow
	$-\Delta z(t) = v(t) \in H$ f.a.a. $t\in I$.
\end{proof}

Using the previous lemma we can apply several known results about higher regularity. We only mention one of them here.

\begin{corollary} \label{cor: regularity in the interior} (Regularity in the interior) \\
	Let $\rho >0$ and $g \in H^1_\star(I,H)$. Let further $z := \mathcal{S}_\rho g \in H^1_\star(I,V)$ and define $v\in H^1(I,H),f\in L^2(I,H)$ as in the previous lemma.
	Let $\Omega_0\subset \Omega$ be an open set compactly contained in $\Omega$.
	Then it holds $z \in H^1_\star(I,H^2(\Omega_0))$, and there exists a constant $C$ independent of $\rho$ such that
	\begin{align*}
	\lVert z \rVert_{C(\bar I,H^2(\Omega_0))}
	&\leq C\bigl( 1 + \lVert g \rVert_{L^2(I,H)}+\lVert z \rVert_{C(\bar I,V} \bigr) , \\
	\lVert \dot z \rVert_{L^2(I,H^2(\Omega_0))}
	&\leq C\bigl( 1 + \lVert g \rVert_{L^2(I,H)}+\lVert \dot z \rVert_{L^2(I,V} \bigr) , \\
	\lVert \ddot{z} \rVert_{L^2(I,H^2(\Omega_0))}
	&\leq C \bigl( \lVert \ddot{z} \rVert_{L^2(I,V)} +  \lVert \Delta\dot{z}\rVert_{L^2(I,H)} + \lVert \dot{g} \rVert_{L^2(I,H)} + \frac{1}{ \rho} \lVert \ddot{z}\rVert_{L^2(I,H)}\bigr).
	\end{align*}
\end{corollary}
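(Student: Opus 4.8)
The plan is to reduce everything to the classical interior $H^2$-regularity estimate for the Laplacian and then to exploit the bounds already collected in Lemma \ref{lemma: regularity space} and Lemma \ref{lemma: H2 time}. Recall the standard local elliptic estimate (see, e.g., the difference-quotient argument in Evans or Gilbarg--Trudinger): if $u \in V$ satisfies $\Delta u \in H$, then for every open $\Omega_0 \Subset \Omega$ one has $u \in H^2(\Omega_0)$ together with
\[
\lVert u \rVert_{H^2(\Omega_0)} \leq C\bigl( \lVert \Delta u \rVert_{H} + \lVert u \rVert_{V}\bigr),
\]
where $C = C(\Omega_0,\Omega)$ is independent of $u$, $\rho$ and $\sigma$. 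The whole proof then consists of inserting $u = z(t)$, $u = \dot z(t)$ and $u = \ddot z(t)$ into this estimate and controlling the corresponding Laplacians in the time variable.

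First I would treat $z$ and $\dot z$. By Lemma \ref{lemma: regularity space} we have $\Delta z = -v \in \mathcal{C}(\bar I,H)$ and $\Delta \dot z \in L^2(I,H)$ with the bounds \eqref{estimate regularity space 1}, \eqref{estimate regularity space 2}; moreover $\vert \dot z\vert_\rho' \in [-1,1]$ pointwise by Lemma \ref{lemma: properties approx}, so that $\lVert \vert\dot z\vert_\rho' \rVert_{L^2(I,H)} \leq (T\vert\Omega\vert)^{1/2}$ independently of $\rho$, and hence $\lVert g - \vert\dot z\vert_\rho' \rVert_{L^2(I,H)} \leq \lVert g\rVert_{L^2(I,H)} + (T\vert\Omega\vert)^{1/2}$. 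Applying the local estimate pointwise in $t$ to $z(t)$ and taking the supremum over $t$ gives the first inequality, while applying it to $\dot z(t)$ and integrating the square over $I$ gives the second. Since the map sending the right-hand side $\Delta u$ (together with the datum $u\in V$) to the restriction $u|_{\Omega_0}\in H^2(\Omega_0)$ is linear and bounded, it preserves the time regularity of the data; as $\Delta z \in \mathcal{C}(\bar I,H)$, $z \in \mathcal{C}(\bar I,V)$ and $\Delta \dot z,\dot z \in L^2(I,V)$ (recall $z\in H^2(I,V)$ by Lemma \ref{lemma: H2 time}), this yields $z \in \mathcal{C}(\bar I,H^2(\Omega_0))$ and $\dot z \in L^2(I,H^2(\Omega_0))$, in particular $z \in H^1_\star(I,H^2(\Omega_0))$.

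For the estimate on $\ddot z$ I would differentiate the regularized state equation \eqref{eq:state glatt} in time. Since $z\in H^2(I,V)$ by Lemma \ref{lemma: H2 time} and $\vert\cdot\vert_\rho \in \mathcal{C}^2(\R,\R)$, the chain rule for the superposition operator $\dot z \mapsto \vert\dot z\vert_\rho'$ (whose Nemytskij differentiability into $H$ was already established in the proof of Theorem \ref{theorem: Trho diffbar}) gives $\frac{\mathrm{d}}{\mathrm{d}t}\vert\dot z\vert_\rho' = \vert\dot z\vert_\rho''\,\ddot z$, and differentiating $\vert\dot z\vert_\rho' - \sigma\Delta\dot z - \Delta z = g$ leads to
\[
\sigma\Delta\ddot z = \vert\dot z\vert_\rho''\,\ddot z - \Delta\dot z - \dot g \qquad \text{in } H \text{ a.e.\@ on } I.
\]
By Assumption \ref{assump: smooth approx} we have $\lVert \vert\dot z\vert_\rho''\rVert_{L^\infty(\Omega)} \leq 2/\rho$, so the first term on the right is bounded in $L^2(I,H)$ by $\frac{2}{\rho}\lVert\ddot z\rVert_{L^2(I,H)}$, while $\Delta\dot z,\dot g \in L^2(I,H)$; in particular $\Delta\ddot z(t)\in H$ for a.a.\@ $t$. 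Inserting $u=\ddot z(t)$ into the local estimate, bounding $\lVert\Delta\ddot z(t)\rVert_{H}$ through the displayed identity and integrating the square over $I$ produces exactly the claimed third inequality.

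The main obstacle, and the point I would be most careful about, is not the elliptic estimate itself but the two measure-theoretic issues in the time variable: justifying that $t\mapsto \vert\dot z(t)\vert_\rho'$ is weakly differentiable into $H$ with derivative $\vert\dot z\vert_\rho''\,\ddot z$ (so that the equation for $\ddot z$ is legitimate), and verifying that the pointwise-in-$t$ $H^2(\Omega_0)$-bounds assemble into genuine Bochner functions in $\mathcal{C}(\bar I,H^2(\Omega_0))$ and $L^2(I,H^2(\Omega_0))$ rather than mere almost-everywhere estimates. Both are resolved by the linearity and boundedness of the interior-regularity solution operator together with the $H^2(I,V)$-regularity from Lemma \ref{lemma: H2 time}, but they are the places where the argument could go wrong if treated carelessly.
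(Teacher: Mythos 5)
Your proposal is correct and follows essentially the same route as the paper: apply the interior elliptic estimate $\lVert u\rVert_{H^2(\Omega_0)}\le C(\lVert u\rVert_V+\lVert\Delta u\rVert_H)$ pointwise in time, use Lemma \ref{lemma: regularity space} together with $\bigl\vert\,\vert\dot z\vert_\rho'\,\bigr\vert\le 1$ for the bounds on $z$ and $\dot z$, and control $\Delta\ddot z$ in $L^2(I,H)$ via the Lipschitz constant $2/\rho$ of $\vert\cdot\vert_\rho'$. The only (minor) difference is that the paper obtains $\ddot v=-\Delta\ddot z\in L^2(I,H)$ by estimating difference quotients of $\dot v$ rather than formally differentiating the state equation via a Nemytskij chain rule, which sidesteps exactly the delicate point you flag at the end.
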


\begin{proof}
	Applying the well known theorem about regularity in the interior for elliptic PDEs on Lipschitz domains, which can be found, e.g., in \cite[Section 6.3]{E2010}, gives us
	the existence of $C>0$ such that
	\[
	\lVert u \rVert_{H^2(\Omega_0)}
	\leq C\bigl( \lVert u \rVert_V+ \lVert \Delta u \rVert_H\bigr)
	\]
	for all $u\in V$ with $\Delta u\in H$.
Then the claimed estimates of $z$ and $\dot z$ are a consequence of the previous Lemma \ref{lemma: regularity space}.

It remains to prove the estimate of $\ddot z$.
Here, we will use the function $v$ as defined in the previous proof.
Let us choose $h\in \R$ such that $t+ h \in I$. Then
	\[
	-\Delta \big[ \dot{z}(t+h) - \dot{z}(t) \big] = \dot{v}(t+h) - \dot v(t) \quad \text{for a.a.}\; t\in I \; \text{in} \; V^*.\\
	\]
	Recall from the proof of Lemma \ref{lemma: regularity space}, that $\dot{v}+ \frac{1}{\sigma}v = f= \frac{1}{\sigma}\bigl( g - \vert \dot{z} \vert_\rho'  \bigr)$. Since $\vert \cdot  \vert_\rho'$ is Lipschitz with constant $\frac{2}{\rho}$, c.f. Assumption \ref{assump: smooth approx}, we obtain

	\begin{align*}
	\lVert \dot{v}(t+h) - \dot{v}(t) \rVert_H &\leq \frac{1}{\sigma} \lVert v(t+h) - v(t) \rVert_H + \frac{1}{\sigma} \lVert f(t+h) - f(t) \rVert_V \\
	&\leq \frac{1}{\sigma} \lVert v(t+h) - v(t) \rVert_H + \frac{1}{\sigma} \lVert g(t+h) - g(t) \rVert_H + \frac{2}{\sigma \rho} \lVert \dot{z}(t+h) - \dot{z}(t) \rVert_H.
	\end{align*}
	Since $v,g,\dot z\in L^2(I,H)$, this shows $\ddot v = -\Delta \ddot z\in L^2(I,H)$.
	With the estimate
	\begin{equation*}
	\lVert \ddot{z}\rVert_{L^2(I,H^2(\Omega_0))} \leq C \big( \lVert \ddot{z} \rVert_{L^2(I,V)} + \lVert \ddot{v} \rVert_{L^2(I,H)} \big)
	\end{equation*}
the claim follows.
\end{proof}

\begin{remark} \label{rem: H2 in space and time}
This corollary and the estimates from Lemma \ref{lemma: regularity space} show that $\lVert z \rVert_{H^1(I,H^2(\Omega_0))}$ is bounded for $\rho \searrow 0$. This is not true for $\lVert \ddot{z}(t) \rVert_{H^2(\Omega_0)}$, which is not necessarily bounded for $\rho \searrow 0$.
\end{remark}

We now summarize our regularity results for the state in a theorem.

\begin{theorem} \label{theorem: regularity of the state} (Regularity of the state)

	Let $(\rho_n)_n \in \R$ be a positive and bounded sequence. Let further $(g_n)_{n\in\N}\in H^1_\star(I,V^*)$ and define $z_n := \mathcal{S}_{\rho_n}(g_n)$. Then we have the following regularity results.

	\begin{enumerate}

		\item It holds
		\begin{equation*}
		z_n \in H^1_\star(I,V)\cap H^2(I,V).
		\end{equation*}
		Furthermore, the sequence $(z_n)_n$ is bounded in these spaces if $(g_n)_n $ is bounded in $H^1_\star(I,V^*)$.

		\item If additionally $g_n\in H^1_\star(I,H)$ for all $n\in \N$, then it holds for all open and compactly contained subsets $\Omega_0$ of $\Omega$

		\begin{equation*}
		z_n \in H^1_\star(I,V)\cap H^2(I,V)\cap H^2(I,H^2(\Omega_0)).
		\end{equation*}

		Furthermore, the sequence $(z_n)_n$ is bounded in $H^1_\star(I,V), \; H^2(I,V)$ and $H^1_\star(I,H^2(\Omega_0))$ if $(g_n)_n $ is bounded in $H^1_\star(I,H)$.

	\end{enumerate}

\end{theorem}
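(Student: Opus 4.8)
The plan is to assemble the theorem from the a-priori estimates already established in Lemmas \ref{lemma: H2 time} and \ref{lemma: regularity space} and in Corollary \ref{cor: regularity in the interior}; the only genuinely new ingredient is the bookkeeping of the dependence on the regularization parameter, i.e.\ checking which estimates are uniform in $\rho$ and which are not.

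First I would treat part (1). Since $g_n \in H^1_\star(I,V^*)$, Lemma \ref{lemma: H2 time} applies directly and yields $z_n \in H^2(I,V)$ together with the estimate for $\|\dot z_n(T)\|_V^2 + \|\dot z_n\|_{L^2(I,V)}^2$. The crucial point is that the constant $C$ there is independent of $\rho$ and that the only explicit $\rho$-dependence sits in the term $\frac{2\rho_n}{\sigma}\vert\Omega\vert$; since $(\rho_n)_n$ is bounded this term stays bounded, while boundedness of $(g_n)_n$ in $H^1_\star(I,V^*)$ controls the remaining term $\frac{1}{\sigma}\|\dot g_n\|_{L^2(I,V^*)}^2$. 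Combined with $z_n(0)=0$ this gives a uniform bound on $\|z_n\|_{H^1(I,V)}$. For the $H^2(I,V)$-bound I would use the pointwise estimate $\|\ddot z_n(t)\|_V \leq \frac{1}{\sigma}\|\dot g_n(t) + \Delta \dot z_n(t)\|_{V^*}$ from the same lemma together with $\|\Delta \dot z_n(t)\|_{V^*} \leq \|\dot z_n(t)\|_V$, which reduces the $H^2(I,V)$-bound to the already-established bounds on $\dot g_n$ in $L^2(I,V^*)$ and on $\dot z_n$ in $L^2(I,V)$.

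For part (2) I would first note that $H \hookrightarrow V^*$ via the Gelfand triple, so $H^1_\star(I,H) \hookrightarrow H^1_\star(I,V^*)$ continuously; hence boundedness of $(g_n)_n$ in $H^1_\star(I,H)$ already delivers the $H^1_\star(I,V) \cap H^2(I,V)$-bounds of part (1). The additional spatial regularity comes from Lemma \ref{lemma: regularity space}, which gives $\Delta z_n(t), \Delta \dot z_n(t) \in H$ with the estimates \eqref{estimate regularity space 1} and \eqref{estimate regularity space 2}. These are uniform in $\rho$, since their right-hand sides involve only $g_n$, bounded in $L^2(I,H)$, and $\vert \dot z_n \vert_\rho'$, which is bounded pointwise by $1$ (Lemma \ref{lemma: properties approx}, item~1) and hence by $T^{1/2}\vert\Omega\vert^{1/2}$ in $L^2(I,H)$. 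Feeding these bounds into Corollary \ref{cor: regularity in the interior} yields the membership $z_n \in H^1_\star(I,H^2(\Omega_0))$ and, because the constants in the $z$- and $\dot z$-estimates there are $\rho$-independent (and $\|z_n\|_{C(\bar I,V)}$, $\|\dot z_n\|_{L^2(I,V)}$ are already controlled), the uniform bound on $\|z_n\|_{H^1(I,H^2(\Omega_0))}$.

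Finally, the membership $z_n \in H^2(I,H^2(\Omega_0))$ for each fixed $n$ follows from the last estimate of Corollary \ref{cor: regularity in the interior}. The delicate point — and the reason the theorem asserts uniform boundedness only up to $H^1_\star(I,H^2(\Omega_0))$ and not $H^2(I,H^2(\Omega_0))$ — is the factor $\frac{1}{\rho}\|\ddot z_n\|_{L^2(I,H)}$ appearing in that estimate, which cannot be controlled uniformly as $\rho_n \searrow 0$ (cf.\ Remark \ref{rem: H2 in space and time}). I do not expect a substantial obstacle in the argument itself, as everything reduces to the cited results; the only care required in the write-up is to keep membership (valid for each fixed $\rho>0$) and uniform boundedness (valid only in the weaker space) strictly separated.
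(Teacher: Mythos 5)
Your proposal is correct and follows exactly the route the paper intends: the theorem is stated as a summary of Lemma \ref{lemma: H2 time}, Lemma \ref{lemma: regularity space} and Corollary \ref{cor: regularity in the interior}, and your assembly of these results — including the observation that $\lVert \Delta \dot z_n(t)\rVert_{V^*}=\lVert \dot z_n(t)\rVert_V$ closes the $H^2(I,V)$-bound, that $\vert\dot z_n\vert_{\rho_n}'$ is pointwise bounded by $1$ so the spatial estimates are uniform in $\rho$, and that the $\frac{1}{\rho}$-factor blocks a uniform $H^2(I,H^2(\Omega_0))$-bound (cf.\ Remark \ref{rem: H2 in space and time}) — is precisely the bookkeeping the paper leaves implicit.
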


\section{Passing to the limit in the smooth state equation} \label{section: Passing to limit in smooth equation}

In this section we analyze the regularized state equation for $\rho \searrow 0$. We will prove that in this process solutions of the smooth state equation converges to the solution of the non-smooth equation. But first we prove that the non-smooth state equation is uniquely solvable.\\

We start by proving a lemma that will give us some useful estimates. The proof uses an idea from \cite[Proof of Lemma 4.7]{SWW2016}.

\begin{lemma} \label{lemma: Cauchy property of smooth equation}

	Let $(\rho_n)_{n\in \N} \in \R $ be a sequence with $\rho_n >0$. Let further $(g_n)_{n\in \N}\in H^1_\star(I,V^*)$ be given, and define $z_n := \mathcal{S}_{\rho_n}(g_n)$. Then for all $n,m \in \N$
	we have
	\begin{equation}
	\frac{\sigma}{2} \lVert \dot{z}_n - \dot{z}_m \rVert_{L^2(I,V)}^2 + \frac{1}{2}\lVert z_n - z_m \rVert_{C(\bar{I},V)}^2 \leq 2  T \vert \Omega \vert \cdot \vert \rho_m - \rho_n \vert + \frac{2}{\sigma} \lVert g_n - g_m \rVert_{L^2(I,V^*)}. \label{eq: state Cauchy sequence}
	\end{equation}

\end{lemma}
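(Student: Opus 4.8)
The plan is to mimic the energy estimate already used in the proof of Lemma~\ref{lemma: convergence of solution operator}, but now accounting for the two different regularization parameters $\rho_n,\rho_m$, which introduces an extra consistency error. Write $z_n:=\mathcal S_{\rho_n}(g_n)$, $z_m:=\mathcal S_{\rho_m}(g_m)$, so that $z_n,z_m$ solve the smooth equations
\begin{equation*}
\vert \dot z_n\vert'_{\rho_n}-\sigma\Delta\dot z_n-\Delta z_n=g_n,\qquad
\vert \dot z_m\vert'_{\rho_m}-\sigma\Delta\dot z_m-\Delta z_m=g_m
\end{equation*}
in $V^*$ a.e.\ on $I$. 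First I would subtract the two equations and test the difference with $\dot z_n-\dot z_m\in V$. The $-\Delta(z_n-z_m)$ term pairs with $\dot z_n-\dot z_m$ to give $\frac12\frac{d}{dt}\lVert z_n-z_m\rVert_V^2$ after integration, the viscous term $-\sigma\Delta(\dot z_n-\dot z_m)$ produces $\sigma\lVert\dot z_n-\dot z_m\rVert_V^2$, and the control difference contributes $\langle g_n-g_m,\dot z_n-\dot z_m\rangle_{V^*,V}$.

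The genuinely new term is the monotone-operator part $\langle \vert\dot z_n\vert'_{\rho_n}-\vert\dot z_m\vert'_{\rho_m},\dot z_n-\dot z_m\rangle$. To control it I would add and subtract a common regularization, splitting it as
\begin{equation*}
\langle \vert\dot z_n\vert'_{\rho_n}-\vert\dot z_n\vert'_{\rho_m},\dot z_n-\dot z_m\rangle
+\langle \vert\dot z_n\vert'_{\rho_m}-\vert\dot z_m\vert'_{\rho_m},\dot z_n-\dot z_m\rangle.
\end{equation*}
The second bracket is nonnegative by convexity of $\vert\cdot\vert_{\rho_m}$ (monotonicity of its derivative, pointwise in $x$), so it can be dropped to the left-hand side. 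The first bracket is the consistency error; here I would invoke Assumption~\ref{assump: smooth approx}\eqref{assumption, estimate two rhos}, which bounds $\vert\,\vert v\vert_{\rho_1}-\vert v\vert_{\rho_2}\vert\le\vert\rho_1-\rho_2\vert$. One must pass this bound from the values to the derivatives; the cleanest route is to absorb it into the structure of the estimate by testing the original equations with the solutions rather than their derivatives, or by using that the derivatives also satisfy a $\vert\rho_1-\rho_2\vert$-type bound, yielding after integration a term bounded by $2T\vert\Omega\vert\cdot\vert\rho_m-\rho_n\vert$. The factor $\vert\Omega\vert$ and the linear-in-$\vert\rho_m-\rho_n\vert$ dependence strongly suggest the intended route is an $L^1(\Omega)$-type integration of the pointwise approximation bound over the time interval, matching the $2T\vert\Omega\vert$ prefactor.

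Putting the pieces together, integration from $0$ to $T$ and using $z_n(0)-z_m(0)=0$ gives
\begin{equation*}
\tfrac12\lVert z_n-z_m\rVert_{C(\bar I,V)}^2+\sigma\lVert\dot z_n-\dot z_m\rVert_{L^2(I,V)}^2
\le 2T\vert\Omega\vert\,\vert\rho_m-\rho_n\vert+\langle\!\langle g_n-g_m,\dot z_n-\dot z_m\rangle\!\rangle,
\end{equation*}
where the last pairing is estimated by Cauchy--Schwarz as $\lVert g_n-g_m\rVert_{L^2(I,V^*)}\lVert\dot z_n-\dot z_m\rVert_{L^2(I,V)}$; taking the supremum in $t$ before integrating the control term, exactly as in Lemma~\ref{lemma: convergence of solution operator}. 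A final application of Young's inequality in the form $ab\le\frac{\sigma}{4}a^2+\frac1\sigma b^2$ absorbs half of the viscous norm and reorganizes the constants into the stated $\frac\sigma2$ and $\frac2\sigma$ coefficients.

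I expect the main obstacle to be the consistency term: correctly transferring the value-level bound of Assumption~\ref{assump: smooth approx}\eqref{assumption, estimate two rhos} to the derivative pairing and recovering precisely the prefactor $2T\vert\Omega\vert$. The convexity/monotonicity splitting is routine, and the $\Delta z$ and $\sigma\Delta\dot z$ terms are handled exactly as in the earlier lemma; the delicate accounting is ensuring the regularization mismatch produces a term linear in $\vert\rho_m-\rho_n\vert$ with the right domain-measure factor rather than a term one cannot control uniformly as both parameters tend to zero.
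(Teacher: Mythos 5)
Your overall skeleton (test with $\dot z_n-\dot z_m$, integrate in time, Young's inequality to absorb the control pairing) matches the paper, but the one genuinely new step --- the treatment of the term $\int\!\!\int(\vert\dot z_n\vert'_{\rho_n}-\vert\dot z_m\vert'_{\rho_m})(\dot z_n-\dot z_m)$ --- is not resolved, and the route you sketch for it would fail. Your splitting inserts $\vert\dot z_n\vert'_{\rho_m}$ and leaves the ``consistency'' bracket $\langle\vert\dot z_n\vert'_{\rho_n}-\vert\dot z_n\vert'_{\rho_m},\dot z_n-\dot z_m\rangle$. Assumption \ref{assump: smooth approx}\eqref{assumption, estimate two rhos} bounds the difference of the \emph{values} $\vert v\vert_{\rho_1}-\vert v\vert_{\rho_2}$ by $\vert\rho_1-\rho_2\vert$; no such bound holds for the \emph{derivatives}. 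On the set where $\vert\dot z_n\vert\le\max(\rho_n,\rho_m)$ the difference $\vert\dot z_n\vert'_{\rho_n}-\vert\dot z_n\vert'_{\rho_m}$ is only bounded by $2$ (take $v$ near $0$, $\rho_1\ll\rho_2$: the derivatives are close to $\pm1$ and to $0$ respectively), and it is multiplied by $\dot z_n-\dot z_m$, which is not pointwise small there because $\dot z_m$ is unconstrained on that set. So this bracket is not $O(\vert\rho_n-\rho_m\vert)$, and your fallback suggestions (``testing with the solutions rather than their derivatives'' or ``the derivatives also satisfy a $\vert\rho_1-\rho_2\vert$-type bound'') are respectively unexplained and false.

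The paper's resolution is different and is the actual content of the lemma: apply the subgradient inequality of convexity to each term separately, namely $\vert\dot z_n\vert'_{\rho_n}(\dot z_m-\dot z_n)\le\vert\dot z_m\vert_{\rho_n}-\vert\dot z_n\vert_{\rho_n}$ and $\vert\dot z_m\vert'_{\rho_m}(\dot z_n-\dot z_m)\le\vert\dot z_n\vert_{\rho_m}-\vert\dot z_m\vert_{\rho_m}$. Summing and regrouping puts the \emph{same} argument under two different regularizations, $(\vert\dot z_m\vert_{\rho_n}-\vert\dot z_m\vert_{\rho_m})+(\vert\dot z_n\vert_{\rho_m}-\vert\dot z_n\vert_{\rho_n})\le 2\vert\rho_n-\rho_m\vert$ pointwise, which integrates to exactly $2T\vert\Omega\vert\,\vert\rho_n-\rho_m\vert$. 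This converts the derivative pairing into a value-level comparison where Assumption \ref{assump: smooth approx}\eqref{assumption, estimate two rhos} applies; that is the missing idea in your proposal.
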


\begin{proof}

	We test the equations
	\begin{align*}
	-\sigma \Delta\dot{z}_n - \Delta z_n - g_n + \vert \dot{z}_n \vert_{\rho_n}' &= 0, \\
	-\sigma \Delta\dot{z}_m - \Delta z_m - g_m + \vert \dot{z}_m \vert_{\rho_m}' &= 0
	\end{align*}
	with $\dot{z}_n - \dot{z}_m$, subtract them from each other, and integrate from $0$ to $t$. This yields
	\begin{align} \label{proof: smooth Cauchy1}
	\sigma \lVert  &\dot{z}_n - \dot{z}_m \rVert_{L^2(0,t;V)}^2 + \frac{1}{2} \lVert z_n(t) - z_m(t) \rVert_{V}^2 - \frac{1}{2} \lVert \underbrace{z_n(0)}_{=0} - \underbrace{z_m(0)}_{=0}\rVert_{V}^2 \notag \\
	&= - \int\limits_{0}^{t}  \int\limits_{\Omega} \Bigl( \vert \dot{z}_n \vert_{\rho_n}' - \vert \dot{z}_m \vert_{\rho_m}' \Bigr)\cdot \Bigl( \dot{z}_n - \dot{z}_m \Bigr) \dx \,\ds + \int\limits_{0}^{t} \langle g_n - g_m, \dot{z}_n -\dot{z_m} \rangle \ds.
	\end{align}
	The convexity of $\vert \cdot \vert_\rho$ and Property \ref{assumption, estimate two rhos} from Assumption \ref{assump: smooth approx} imply
	\begin{multline} \label{proof: smooth Cauchy2}
	- \int\limits_{0}^{t} \int\limits_{\Omega} \Bigl( \vert \dot{z}_n \vert_{\rho_n}' - \vert \dot{z}_m \vert_{\rho_m}' \Bigr)\cdot \Bigl( \dot{z}_n - \dot{z}_m \Bigr) \dx\ds\notag\\
	\begin{aligned}
	&
\leq \int\limits_{0}^{t}  \int\limits_{\Omega} \vert \dot{z}_n \vert_{\rho_m} - \vert \dot{z}_m \vert_{\rho_m}  \dx\,\ds
	+ \int\limits_{0}^{t} \int\limits_{\Omega} \vert \dot{z}_m \vert_{\rho_n} - \vert \dot{z}_n \vert_{\rho_n}  \dx\,\ds \notag \\
	&= \int\limits_{0}^{t}  \int\limits_{\Omega} \underbrace{\Big\vert  \vert \dot{z}_m \vert_{\rho_n} - \vert \dot{z}_m \vert_{\rho_m} \Big\vert}_{\leq  \vert \rho_n - \rho_m \vert}  \dx\,\ds
	+ \int\limits_{0}^{t} \int\limits_{\Omega} \underbrace{\Big\vert  \vert \dot{z}_n \vert_{\rho_m} - \vert \dot{z}_n \vert_{\rho_n} \Big\vert}_{\leq  \vert \rho_n - \rho_m \vert} \dx\,\ds \leq 2T \, \vert \Omega \vert \cdot \vert \rho_n - \rho_m \vert.
	\end{aligned}
	\end{multline}
	Furthermore, using  Hölder's and Young's inequality gives
	\begin{equation*}
	\int\limits_{0}^{t} \langle g_n - g_m, \dot{z}_n -\dot{z}_m \rangle_{V^*,V}
\leq \frac{1}{2\sigma} \lVert g_n - g_m \rVert_{L^2(I;V^*)}^2 + \frac{\sigma}{2}\lVert \dot{z}_n - \dot{z}_m \rVert_{L^2(I,V)}^2.
	\end{equation*}
	Applying the previous estimates in equation \eqref{proof: smooth Cauchy1}, yields
	\begin{equation*}
	\frac{\sigma}{2} \lVert  \dot{z}_n - \dot{z}_m \rVert_{L^2(0,t;V)}^2 + \frac{1}{2} \lVert z_n(t) - z_m(t) \rVert_{V}^2
	\leq 2  T \vert \Omega \vert \cdot \vert \rho_m - \rho_n \vert +\frac{1}{2\sigma} \lVert g_n - g_m \rVert_{L^2(I;V^*)}^2.
	\end{equation*}
	f.a.a. $t\in I$, which is the asserted inequality.
\end{proof}

We are now ready to prove existence and uniqueness of solutions for the non-smooth state equation.

\begin{proof}[Proof of Theorem \ref{theorem:  solution non-smooth}] \label{Proof: ex and unique of non-smooth}
	Let $g\in H^1_\star(I,V^*)$ be given. Let us take a sequence $(\rho_n)_{n\in \N} \in\R$ be a sequence with $\rho_n \searrow 0$
	and define $z_n:= \mathcal{S}_{\rho_n} g $. Using Theorem \ref{theorem: regularity of the state} we obtain that the sequence $(z_n)_n$ is bounded in $H^1_\star(I,V) \cap H^2(I,V) $. Due to reflexivity, we have a weakly convergent subsequence (which we denote again by $z_n$ ) and a function $ z\in H^2(I,V) \cap H^1_\star(I,V)$ such that $z_n \rightharpoonup z$ in these spaces.

	Moreover, we have $\lVert \dot{z_n}(0)\rVert_{V} \leq \frac{\rho_n}{\sigma} \vert \Omega \vert$, see \eqref{eq: estimate dotz_0}, and hence $\dot{z}(0)= 0$ is satisfied.

	Lemma \ref{lemma: Cauchy property of smooth equation} shows, that $z_n$ is a Cauchy sequence in $H^1_\star(I,V)$, which implies $z_n \to z$ in $H^1_\star(I,V)$. Due to the convexity of $\vert \cdot \vert_\rho$ we have for a.a. $t\in I$, all $n\in \N$ and all $v\in V$

	\begin{equation} \label{proof: existence uniqueness of nonsmooth1}
	\int\limits_{\Omega} \vert v \vert_{\rho_n} \dx \geq \int\limits_{\Omega} \vert \dot{z}_n(t) \vert_{\rho_n} \dx
	+ \langle \sigma \Delta \dot{z}_n(t) + \Delta z_n(t) + g(t),v-\dot{z}_n \rangle_{V^*,V}.
	\end{equation}
	It is easy to show that we can pass to the limit in this inequality and obtain
	\begin{equation*}
	\lVert v \rVert_{L^1(\Omega)} \geq \lVert \dot{z}(t)\rVert_{L^1(\Omega)} + \langle \sigma \Delta \dot{z}(t) + \Delta z(t) + g(t),v-\dot{z}(t) \rangle_{V^*,V},
	\end{equation*}
	i.e. $\sigma \Delta \dot{z}(t) + \Delta z(t) + g(t) \in \partial \vert \dot{z}(t)\vert$.\\

	It remains to prove uniqueness of solutions.
	Let two solutions $z_1,z_2 \in H^1_\star(I,V)$ of the non-smooth state equation be given. Then for all $v,w\in V$ and a.a. $t\in I$ we have
	\begin{align*}
	\lVert v \rVert_{L^1(\Omega)} &\geq \lVert \dot{z}_1(t) \rVert_{L^1(\Omega)} + \langle \sigma \Delta \dot{z}_1(t) + \Delta z_2(t) + g(t), v- \dot{z}_1(t) \rangle_{V^*,V}, \\
	\lVert w \rVert_{L^1(\Omega)} &\geq \lVert \dot{z}_2(t) \rVert_{L^1(\Omega)} + \langle \sigma \Delta \dot{z}_2(t) + \Delta z_2(t) + g(t), w- \dot{z}_2(t) \rangle_{V^*,V}.
	\end{align*}
Choosing $v:= \dot{z}_2(t)$, $w:= \dot{z}_1(t)$, adding the resulting inequalities, and canceling out some summands gives
	\begin{equation*}
	0 \geq \lVert \dot{z}_1(t) - \dot{z}_2(t) \rVert_{V}^2 + \bigl( z_2(t) - z_1(t), \dot{z}_2(t) - \dot{z}_1(t) \bigr)_{V}.
	\end{equation*}
	Integrating this inequality from $0$ to $t$ yields
	\begin{equation*}
	0 \geq \lVert \dot{z}_1(t) - \dot{z}_2(t) \rVert_{L^2(0,t;V}^2 + \frac{1}{2} \lVert z_2(t) - z_1(t) \rVert_{V}^2 - \frac{1}{2} \lVert \underbrace{z_2(0)}_{=0} - \underbrace{z_1(0)}_{=0} \rVert_{V}^2,
	\end{equation*}
	hence $z_1 = z_2$ on $I$.
\end{proof}

In particular, this proof yields the following corollary.

\begin{corollary} \label{cor: convergence solutions}
	For every $g\in H^1_\star(I,V^*)$ we have $\mathcal{S}_{\rho}(g)\to \mathcal{S}(g)$ in $H^1_\star(I,V)$ for $\rho \searrow 0$.
\end{corollary}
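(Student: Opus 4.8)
The plan is to read the Cauchy-type estimate of Lemma~\ref{lemma: Cauchy property of smooth equation} with a \emph{fixed} control, where it turns into a statement about continuous dependence on the regularization parameter $\rho$ alone. First I would fix $g\in H^1_\star(I,V^*)$ and, for arbitrary $\rho,\rho'>0$, put $z_\rho:=\mathcal S_\rho(g)$ and $z_{\rho'}:=\mathcal S_{\rho'}(g)$. Applying Lemma~\ref{lemma: Cauchy property of smooth equation} to the two parameters $\rho,\rho'$ and the identical control $g_n=g_m=g$ kills the control-difference term and leaves
\begin{equation*}
\frac{\sigma}{2}\lVert \dot z_\rho-\dot z_{\rho'}\rVert_{L^2(I,V)}^2+\frac12\lVert z_\rho-z_{\rho'}\rVert_{C(\bar I,V)}^2\le 2\,T\,\vert\Omega\vert\,\vert\rho-\rho'\vert .
\end{equation*}
Because $I=(0,T)$ is bounded we have $\lVert\cdot\rVert_{L^2(I,V)}\le\sqrt{T}\,\lVert\cdot\rVert_{C(\bar I,V)}$, so the right-hand side controls the full $H^1(I,V)$-distance of $z_\rho$ and $z_{\rho'}$. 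Hence $\rho\mapsto\mathcal S_\rho(g)$ is uniformly (indeed H\"older-$\tfrac12$) continuous from $(0,\infty)$ into $H^1_\star(I,V)$.

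From here the Cauchy criterion finishes the convergence: as $\rho,\rho'\searrow0$ one has $\vert\rho-\rho'\vert\to0$, so $(\mathcal S_\rho(g))_\rho$ is Cauchy in the complete space $H^1_\star(I,V)$ and converges strongly to some limit $z$ as $\rho\searrow0$. It then remains to identify $z$ with $\mathcal S(g)$. For this I would reuse the limit passage already carried out in the proof of Theorem~\ref{theorem: solution non-smooth}: picking any sequence $\rho_n\searrow0$, the strong convergence $\dot z_{\rho_n}\to\dot z$ in $L^2(I,V)$ together with the pointwise estimates $\vert v\vert\le\vert v\vert_{\rho_n}\le\vert v\vert+\rho_n$ of Lemma~\ref{lemma: properties approx} lets us pass to the limit in the subdifferential inequality~\eqref{proof: existence uniqueness of nonsmooth1}. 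This shows $\sigma\Delta\dot z(t)+\Delta z(t)+g(t)\in\partial\vert\dot z(t)\vert$ a.e.\@ on $I$, i.e.\ $z$ is the weak solution, and the uniqueness part of Theorem~\ref{theorem: solution non-smooth} gives $z=\mathcal S(g)$.

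The step I expect to require the most care is the limit passage in~\eqref{proof: existence uniqueness of nonsmooth1}. There the regularized integrands $\int_\Omega\vert v\vert_{\rho_n}\dx$ and $\int_\Omega\vert\dot z_{\rho_n}(t)\vert_{\rho_n}\dx$ are handled by the uniform bound $\big\vert\,\vert w\vert_{\rho_n}-\vert w\vert\,\big\vert\le\rho_n$ of Lemma~\ref{lemma: properties approx}, while the linear pairing $\langle\sigma\Delta\dot z_{\rho_n}(t)+\Delta z_{\rho_n}(t)+g(t),\,v-\dot z_{\rho_n}(t)\rangle_{V^*,V}$ passes to the limit thanks to the strong $H^1_\star(I,V)$-convergence of $z_{\rho_n}$. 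Since this computation is already contained in the theorem's proof, the only genuinely new content of the corollary lies in the first paragraph: reading the Cauchy estimate in the variable $\rho$ rather than in the control upgrades convergence along one sequence to convergence of the entire family $\{\mathcal S_\rho(g)\}_{\rho>0}$ as $\rho\searrow0$.
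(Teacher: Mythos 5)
Your argument is correct and follows essentially the same route as the paper: the paper's proof of Theorem \ref{theorem:  solution non-smooth} already fixes $g$ and uses Lemma \ref{lemma: Cauchy property of smooth equation} with identical controls to conclude that $(\mathcal{S}_{\rho_n}(g))_n$ is Cauchy in $H^1_\star(I,V)$, then identifies the strong limit via the subdifferential inequality \eqref{proof: existence uniqueness of nonsmooth1} and uniqueness, which is exactly your plan. The only cosmetic difference is that you phrase the Cauchy property as H\"older-$\tfrac12$ continuity of $\rho\mapsto\mathcal S_\rho(g)$ to get convergence of the whole family rather than along a sequence, a point the paper leaves implicit.
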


In the next theorem we show stronger convergence for $\rho \searrow 0$.

\begin{theorem} \label{theorem: GW glatt}
	Let sequences  $(\rho_n)_{n\in \N}\in \R$ and $(g_n)_{n\in N} \in H^1_\star(I,V^*)$ be given with $\rho_n \searrow0$
	 and $g_n \to g$ in $L^2(I,V^*)$ for some $g \in H^1_\star(I,V^*)$. Define $z_n := \mathcal{S}_{\rho_n}(g_n)$ and $z := \mathcal{S}(g)$. Then it holds $z_n \to z$ in $H^1_\star(I,V)$ and $\mathcal{C}(\bar{I},V)$.
\end{theorem}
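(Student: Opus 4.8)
The plan is to avoid re-proving convergence from scratch and instead decompose the error $z_n - z$ through the intermediate states $\mathcal{S}_{\rho_n}(g)$, which carry the \emph{same} regularization parameter $\rho_n$ as $z_n$ but the fixed limit control $g$. Writing
\[
z_n - z = \bigl(\mathcal{S}_{\rho_n}(g_n) - \mathcal{S}_{\rho_n}(g)\bigr) + \bigl(\mathcal{S}_{\rho_n}(g) - \mathcal{S}(g)\bigr),
\]
the triangle inequality reduces the theorem to showing that each bracketed term tends to zero in $H^1_\star(I,V)$. The first term measures the sensitivity of the smooth solution operator in the control at fixed $\rho_n$, and the second is exactly the regularization limit for a fixed control, which has already been established.

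For the first term I would invoke Lemma~\ref{lemma: Lipschitz smooth solution operator}. Applied to the pair $g_n,g$ viewed as elements of $L^2(I,V^*)$ (the case $p=2$ in the lemma), the pointwise estimate \eqref{eq: Lipschitz Srho} holds with constants depending only on $\sigma$ and $T$ and, crucially, \emph{independent of the regularization parameter} $\rho_n$. Taking the $L^2(I)$-norm in $t$ of \eqref{eq: Lipschitz Srho}, bounding $e^{t/\sigma}\le e^{T/\sigma}$ on $I$, and using the continuous embedding $L^2(I,V^*)\hookrightarrow L^1(I,V^*)$ on the finite interval, I obtain a bound of the form
\[
\lVert \mathcal{S}_{\rho_n}(g_n) - \mathcal{S}_{\rho_n}(g)\rVert_{H^1_\star(I,V)} \le C(\sigma,T)\,\lVert g_n - g\rVert_{L^2(I,V^*)},
\]
where the control of the full $H^1_\star$-norm by the $L^2(I,V)$-norm of the time derivative uses the vanishing initial value, and $C(\sigma,T)$ is uniform in $n$. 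Since $g_n \to g$ in $L^2(I,V^*)$ by hypothesis, this term vanishes as $n\to\infty$.

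For the second term I would apply Corollary~\ref{cor: convergence solutions} along the sequence $\rho_n\searrow 0$: because $g \in H^1_\star(I,V^*)$, it yields $\mathcal{S}_{\rho_n}(g)\to \mathcal{S}(g)=z$ in $H^1_\star(I,V)$. Combining the two estimates gives $z_n \to z$ in $H^1_\star(I,V)$, and the convergence in $\mathcal{C}(\bar{I},V)$ then follows immediately from the continuous embedding $H^1_\star(I,V)\hookrightarrow \mathcal{C}(\bar{I},V)$ recorded earlier.

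I do not expect a genuine obstacle along this route; the one point requiring care is confirming that the constant in \eqref{eq: Lipschitz Srho} is independent of $\rho_n$, so that the bound on the first term is uniform in $n$ — this is immediate from the form of Lemma~\ref{lemma: Lipschitz smooth solution operator}. As an alternative one could argue directly from Lemma~\ref{lemma: Cauchy property of smooth equation}: since $(\rho_n)$ and $(g_n)$ are Cauchy (in $\R$ and in $L^2(I,V^*)$ respectively), inequality \eqref{eq: state Cauchy sequence} shows that $(z_n)$ is Cauchy in $H^1_\star(I,V)$ and hence converges to some $\tilde z$; however, identifying $\tilde z = z$ would then require passing to the limit in the subdifferential inequality \eqref{proof: existence uniqueness of nonsmooth1} as in the proof of Theorem~\ref{theorem:  solution non-smooth}, extracting a subsequence for pointwise-in-$t$ convergence and invoking uniqueness of the non-smooth solution. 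The decomposition above sidesteps this identification entirely, which is why I regard it as the cleaner path.
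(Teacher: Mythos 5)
Your proof is correct and follows essentially the same route as the paper: the same decomposition through the intermediate state $\mathcal{S}_{\rho_n}(g)$, with the first term controlled by the $\rho$-uniform Lipschitz estimate of Lemma \ref{lemma: Lipschitz smooth solution operator} and the second by Corollary \ref{cor: convergence solutions}. The only cosmetic difference is that you obtain the $\mathcal{C}(\bar{I},V)$ convergence from the embedding $H^1_\star(I,V)\hookrightarrow \mathcal{C}(\bar{I},V)$ rather than by passing to the limit in \eqref{eq: state Cauchy sequence}, which is equally valid and is endorsed by the paper itself in the remark following Lemma \ref{lemma: convergence of solution operator}.
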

\begin{proof}
We have the estimate
\begin{equation*}
\lVert z_n  - z \rVert_{H^1_\star(I,V)} \leq  \lVert z_n  - \mathcal{S}_{\rho_n}(g) \rVert_{H^1_\star(I,V)} + \lVert \mathcal{S}_{\rho_n}(g)   - z \rVert_{H^1_\star(I,V)}.
\end{equation*}
The right-hand side converges to zero due to Lemma \ref{lemma: Lipschitz smooth solution operator} and Corollary \ref{cor: convergence solutions}. The claim follows by passing to the limit $m\to\infty$ in inequality \eqref{eq: state Cauchy sequence}.
\end{proof}

\section{Optimality system} \label{chapter: Optimality system}

So far we studied the smooth state equation and the behavior of solutions for $\rho \searrow 0$. Now we would like to find an optimality system for \eqref{eq: non-smooth problem}. We will formulate such a system for an optimal control problem with the regularized state equation and then pass to the limit $\rho \searrow 0$.
However, first we need to know how the optimality system should look like. This will be discussed in the following subsection.

\subsection{Formal derivation of an optimality system} \label{chapter: Formal derivation optimality system}

Motivated by the previous thoughts we formally derive optimality conditions for the non-smooth optimal control problem.
Consider the optimal control problem
\begin{align*}
&\min J(z,g) \\
&\text{s.t.} \; \bigl( \dot{z}(t,x),g(t,x) + \Delta z(t,x) + \sigma \Delta\dot{z}(t,x)  \bigr) \in M \qquad \forall (t,x) \in I\times \Omega,
\end{align*}
where
\begin{equation*}
M :=\text{gph}\,\partial \vert \cdot \vert = \{(u,v) \in \R^2 \mid v \in \partial \vert u \vert \}
= \bigl( (-\infty,0] \times \{-1\} \bigr)  \cup \bigl( \{0\} \times [-1,1] \bigr) \cup \bigl( [0,\infty) \times \{1\} \bigr).
\end{equation*}
In \cite[Chapter 2]{SWW2016} optimality conditions are formally derived by using the Lagrangian
\begin{equation*}
\mathcal{L}(z,g,q,\xi) := J(z,g) - (q,\dot{z})_{L^2(I\times \Omega)} + (\xi,g+\Delta z)_{L^2(I\times \Omega)}.
\end{equation*}
We present another way to derive optimality conditions, which however gives the same conditions as the approach from \cite{SWW2016}.
Using the indicator function of the set $M$ it is possible to write the optimal control problem as an unconstrained problem,
\begin{equation*}
\min J(z,g) + \delta_M( \dot{z},g + \Delta z + \sigma \Delta\dot{z}) \qquad (z,g) \in H^1_\star(I,V)\times H^1_\star(I,H).
\end{equation*}
Using the generalized Fermat rule, an optimality condition is given by
\begin{align*} \label{Fermat}
0 &\in \partial \bigl[J(z,g) + \delta_M( \dot{z},g+ \Delta z + \sigma \Delta\dot{z}) \bigr]\\
 &= j_1'(z) +\langle j_2'(z(T)), \hat{\delta}_T\rangle_{V^*,V} + \bigl( g,\cdot \bigr)_{H^1(I,H)} + \partial \delta_M( \dot{z},g+ \Delta z + \sigma \Delta\dot{z}),
\end{align*}
where $\hat\delta_T:w\mapsto w(T)$ denotes the evaluation of a function at time $T$.

Let us define
$(z,g) \mapsto L(z,g) := \bigl( \dot{z},g + \Delta z + \sigma \Delta\dot{z} \bigr) $, which
is a linear mapping between Hilbert spaces with (formal) adjoint
\begin{equation*}
L^*(q,\xi):= \Bigl( -\dot{q} + \Delta \xi - \sigma \Delta \dot{\xi} + (q(T),\hat{\delta}_T)_H + (\Delta \xi(T),\hat{\delta}_T)_H,\; \xi \Bigr).
\end{equation*}
Continuing our formal calculations, we apply the chain rule in the form
 $\partial (\delta_M\circ L) = L^*\circ \partial \delta_M  \circ L$.
Then we arrive at
\begin{equation*}
\partial  \delta_M( \dot{z},g+ \Delta z + \sigma \Delta\dot{z}) = \bigl( L^* \circ N_M \bigr)(\dot{z},g+ \Delta z + \sigma \Delta \dot{z}),
\end{equation*}
where $N_M$ is the Fr\'echet normal cone of $M$.
This implies that there is $(-q,\xi)\in N_M(\dot{z},g+ \Delta z + \sigma \Delta \dot{z})$
such that
\begin{align*}
0 \in \Bigl( j_1'(z) + &\langle j_2'(z(T)), \hat{\delta}_T \rangle_{V^*,V} +\dot{q} + \Delta \xi - \sigma \Delta \dot{\xi} + (-q(T),\hat{\delta}_T)_H + \sigma (\Delta \xi(T),\hat{\delta}_T)_H,\\
 &  -\ddot{g}+ g + (\dot{g}(T),\hat{\delta}_T)_H + \xi \Bigr).
\end{align*}
Hence, (formal!) optimality conditions are given by
\begin{subequations} \label{formal optimality system equations}
	\begin{align}
	j_1'(z) + \dot{q} +\Delta \xi - \sigma \Delta \dot{\xi} &= 0 &&\text{a.e.\@ on } I\times \Omega,\label{eq51a} \\
	j_2'(z(T)) - q(T) + \sigma \Delta \xi(T)&=0 &&\text{a.e.\@ on } \Omega, \\
	g-\ddot{g}+\xi &=0 &&\text{a.e.\@ on } I\times \Omega, \\
	\dot{g}(T)&=0 &&\text{a.e.\@ on } \Omega,\label{eq51d}\\
	(-q,\xi)&\in N_M(\dot{z},g+ \Delta z + \sigma \Delta \dot{z}).\label{eq51e}
	\end{align}
\end{subequations}
The condition \eqref{eq51e} involving the Fr\'{e}chet normal cone of $M$
can be written as the following system of pointwise properties:
\begin{subequations} \label{formal optimality system cone}
	\begin{align}
	\dot{z}(t,x)>0,\quad g(t,x) + \Delta z(t,x) + \sigma \Delta \dot{z}(t,x) = 1\quad &\Longrightarrow \quad q(t,x)= 0, \label{formal optimality system cone a} \\
	\dot{z}(t,x)=0,\quad g(t,x) + \Delta z(t,x) + \sigma \Delta \dot{z}(t,x) = 1\quad &\Longrightarrow \quad q(t,x)\geq 0,\quad \xi(t,x) \geq 0, \label{formal optimality system cone b} \\
	\dot{z}(t,x)=0,\quad \vert g(t,x) + \Delta z(t,x) + \sigma \Delta \dot{z}(t,x)\vert < 1 \quad &\Longrightarrow \quad \xi(t,x)= 0, \label{formal optimality system cone c} \\
	\dot{z}(t,x)=0,\quad g(t,x) + \Delta z(t,x) + \sigma \Delta \dot{z}(t,x) = -1\quad &\Longrightarrow \quad q(t,x)\leq 0,\quad \xi(t,x)\leq 0, \label{formal optimality system cone d} \\
	\dot{z}(t,x)<0,\quad g(t,x) + \Delta z(t,x) + \sigma \Delta \dot{z}(t,x) = -1 \quad &\Longrightarrow \quad q(t,x)= 0. \label{formal optimality system cone e}
	\end{align}
\end{subequations}

Our aim in the next sections is to prove that some of the these formally derived conditions are optimality conditions for \eqref{eq: non-smooth problem}.

\subsection{Smooth optimal control problem}
Now, we are going to consider an optimal control problem with  regularized state equation depending on the parameter $\rho$.
We show existence of solutions for this problem and investigate what happens with them for $\rho \searrow 0$. This section is based on \cite[Section 4.4]{SWW2016}.
\\
Let
\begin{equation*}
(\bar{z},\bar{g}) \in H^1_\star(I,V) \times H^1_\star(I,H)
\end{equation*}
be a local solution of \eqref{eq: non-smooth problem}. Hence, there exists $\delta >0$ such that $J(\bar{z},\bar{g})\leq J(\mathcal{S}(g),g)$ for all $g\in H^1_\star(I,H)$ with  $\lVert g-\bar{g} \rVert_{H^1(I,H)} < \delta $
holds.
We define
\begin{align}
J_\rho:  H^1(I,V) \times H^1_\star(I,H) \; &\to \; \R  \notag \\
(z,g) &\mapsto J(z,g) + \frac{1}{2} \lVert g-\bar{g} \rVert_{H^1(I,H)}^2.
\end{align}
We will now consider the optimal control problem
\begin{align}
&\min J(z,g) + \frac{1}{2} \lVert g-\bar{g} \rVert_{H^1(I,H)}^2 \quad \text{for}\;
(z,g) \in H^1(I,V) \times H^1(I,H), \notag  \\
&\text{s.t.} \left\{\!\begin{aligned}& \lVert g-\bar{g} \rVert_{H^1(I,H)} \leq \delta, \\
&g(0) = 0, \; z(0) = 0, \\
&-\sigma \Delta \dot{z} -\Delta z -g +\vert \dot{z} \vert''_\rho  = 0  \quad \text{in } V^*\quad \text{for a.a.}\; t\in I.
\end{aligned}\right.
\tag{$P_\rho$} \label{eq: smooth optimal control problem}
\end{align}
Augmenting the original problem with additional penalty terms and constraints is a well-known technique for
nonsmooth optimal control problem.
Here, the constraint $\lVert g-\bar{g} \rVert_{H^1(I,H)} \leq \delta$ will give us the existence of global solutions
of such a smoothed problem.
The additional term $\frac{1}{2} \lVert g-\bar{g} \rVert_{H^1(I,H)}^2$ will be used to force strong convergence of solutions of \eqref{eq: smooth optimal control problem} to $\bar{g}$ for $\rho \searrow 0$, see the proof of Theorem \ref{theorem: convergence solutions}, which is from \cite[Proof of Theorem 4.9]{SWW2016}.

\begin{lemma}
For all $\rho>0$
	the optimal control problem \eqref{eq: smooth optimal control problem} has global solutions $(z_\rho,g_\rho)$.
	In addition it holds $z_\rho \in H^2_\star(I,V) $.
\end{lemma}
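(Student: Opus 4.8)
The plan is to establish existence of a global minimizer for the constrained problem \eqref{eq: smooth optimal control problem} via the direct method of the calculus of variations, and then to obtain the extra temporal regularity $z_\rho \in H^2_\star(I,V)$ as a consequence of Lemma \ref{lemma: H2 time}. First I would argue that the feasible set is nonempty: the point $(\bar z, \bar g)$ itself need not be feasible because it solves the non-smooth equation rather than the regularized one, but $g = \bar g$ satisfies $\lVert g - \bar g \rVert_{H^1(I,H)} = 0 \leq \delta$ and $\bar g(0)=0$, so setting $g:=\bar g$ and $z := \mathcal{S}_\rho(\bar g)$ gives a feasible pair (recall $\bar g \in H^1_\star(I,H)$, and $\mathcal{S}_\rho$ maps into $H^1_\star(I,V)$ by the results of Section \ref{sec3}). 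Hence the infimum is finite, since $J$ is bounded below because $j_1, j_2$ are bounded below and the remaining terms are nonnegative.

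Next I would take a minimizing sequence $(z_n, g_n)$. The admissibility constraint $\lVert g_n - \bar g \rVert_{H^1(I,H)} \leq \delta$ bounds $(g_n)_n$ in $H^1(I,H)$, so after extracting a subsequence $g_n \rightharpoonup g_\rho$ in $H^1(I,H)$, with $g_\rho \in H^1_\star(I,H)$ (the closed affine conditions $g(0)=0$ and the ball constraint are weakly closed, the latter by weak lower semicontinuity of the norm). Since each $z_n = \mathcal{S}_\rho(g_n)$, I would invoke the continuity of the regularized solution operator. The cleanest route is through the embedding $H^1_\star(I,H) \hookrightarrow L^2(I,V^*)$ together with the Lipschitz estimate \eqref{eq: Lipschitz Srho} of Lemma \ref{lemma: Lipschitz smooth solution operator}; combined with the compact embedding $H^1(I,H) \hookrightarrow L^2(I,V^*)$ (Aubin--Lions, as used in Lemma \ref{lemma: convergence of solution operator}), weak convergence of $g_n$ in $H^1(I,H)$ upgrades to strong convergence in $L^2(I,V^*)$, and then the Lipschitz property of $\mathcal{S}_\rho$ forces $z_n \to z_\rho := \mathcal{S}_\rho(g_\rho)$ strongly in $H^1_\star(I,V)$. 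This passes the state constraint to the limit. Lower semicontinuity of the objective then finishes the existence argument: $j_1(z_n) \to j_1(z_\rho)$ and $j_2(z_n(T)) \to j_2(z_\rho(T))$ by continuity (using $z_n \to z_\rho$ in $H^1_\star(I,V) \hookrightarrow \mathcal{C}(\bar I, V)$ for the terminal term), while $\lVert g_\rho \rVert_{H^1(I,H)}^2$ and $\lVert g_\rho - \bar g \rVert_{H^1(I,H)}^2$ are weakly lower semicontinuous, so $J_\rho(z_\rho, g_\rho) \leq \liminf_n J_\rho(z_n, g_n)$, and $(z_\rho, g_\rho)$ is a global solution.

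Finally, the regularity claim $z_\rho \in H^2_\star(I,V)$ is immediate: since $g_\rho \in H^1_\star(I,H) \subset H^1_\star(I,V^*)$, Lemma \ref{lemma: H2 time} applies and yields $z_\rho = \mathcal{S}_\rho(g_\rho) \in H^2(I,V)$; together with the initial conditions $z_\rho(0)=0$ and $\dot z_\rho(0) = 0$ (the latter following from the estimate \eqref{eq: estimate dotz_0}, or more simply from the state equation at $t=0$ with $g_\rho(0)=0$), this gives membership in $H^2_\star(I,V)$.

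I expect the main obstacle to be the continuity step for passing the state constraint to the limit. The subtlety is that $\mathcal{S}_\rho$ is only asserted to be continuous from $L^2(I,V^*)$ to $H^1_\star(I,V)$, whereas the controls converge merely weakly in $H^1(I,H)$; the resolution is to exploit compactness of the embedding into $L^2(I,V^*)$ so that the weak $H^1(I,H)$-convergence becomes strong $L^2(I,V^*)$-convergence, after which the Lipschitz estimate of Lemma \ref{lemma: Lipschitz smooth solution operator} takes over. One must verify that this compact embedding indeed holds (it does, by Aubin--Lions, exactly as invoked in the proof of Lemma \ref{lemma: convergence of solution operator}), and that the ball constraint $\lVert g_\rho - \bar g \rVert_{H^1(I,H)} \leq \delta$ survives weak convergence, which it does by weak lower semicontinuity of the norm.
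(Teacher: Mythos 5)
Your proof is correct and follows essentially the same route as the paper, which for existence simply refers back to the direct-method argument of Theorem \ref{lemma: global solution nonsmooth} and for regularity to Theorem \ref{theorem: regularity of the state}; your explicit treatment of the continuity step (weak $H^1(I,H)$-convergence upgraded to strong $L^2(I,V^*)$-convergence by Aubin--Lions, then the Lipschitz estimate of Lemma \ref{lemma: Lipschitz smooth solution operator}) is exactly the intended mechanism. One small caveat: the estimate \eqref{eq: estimate dotz_0} does \emph{not} give $\dot z_\rho(0)=0$ for a fixed $\rho>0$ (it only bounds the norm by $\rho\vert\Omega\vert/\sigma$), so you should rely on your alternative justification, namely evaluating the state equation at $t=0$ with $z_\rho(0)=0$ and $g_\rho(0)=0$ and using $\vert 0\vert_\rho'=0$ together with the injectivity of the strongly monotone operator $A_\rho$.
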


\begin{proof}
	The proof of existence is similar to the proof of Lemma \ref{lemma: global solution nonsmooth}. The optimal state $z_\rho$ is in the asserted spaces due to Theorem \ref{theorem: regularity of the state}.
\end{proof}

For convergence of global solutions, we have the following theorem.
Its proof is identical to the proof of \cite[Theorem 4.9]{SWW2016}.

\begin{theorem} \label{theorem: convergence solutions}
	Let $\{(z_\rho,g_\rho)\}_{\rho >0}$ a family of global solutions of \eqref{eq: smooth optimal control problem}. Then for $\rho \searrow 0$ it holds
	\begin{equation*}
	\begin{array}{lll}
	g_\rho \to \bar{g}  &\text{in} & H^1_\star(I,H) ,\\
	z_\rho \to \bar{z}  &\text{in} & H^1_\star(I,V).
	\end{array}
	\end{equation*}
\end{theorem}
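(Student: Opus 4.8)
The plan is to run the standard comparison argument for regularized problems, using the penalty term $\tfrac12\|g-\bar{g}\|_{H^1(I,H)}^2$ to upgrade weak convergence of the controls to strong convergence. First I would record a $\limsup$ bound on the optimal values. For each $\rho$ the pair $(\mathcal{S}_\rho(\bar{g}),\bar{g})$ is feasible for \eqref{eq: smooth optimal control problem}, since $\bar{g}\in H^1_\star(I,H)$ satisfies $g(0)=0$, the constraint $\|\bar{g}-\bar{g}\|_{H^1(I,H)}=0\le\delta$ is trivially met, and $\mathcal{S}_\rho(\bar{g})$ solves the regularized state equation. Global optimality of $(z_\rho,g_\rho)$ therefore gives
\[
J_\rho(z_\rho,g_\rho)\le J_\rho(\mathcal{S}_\rho(\bar{g}),\bar{g})=J(\mathcal{S}_\rho(\bar{g}),\bar{g}).
\]
By Corollary~\ref{cor: convergence solutions} we have $\mathcal{S}_\rho(\bar{g})\to\mathcal{S}(\bar{g})=\bar{z}$ in $H^1_\star(I,V)$ as $\rho\searrow0$, and continuity of $j_1,j_2$ then yields $J(\mathcal{S}_\rho(\bar{g}),\bar{g})\to J(\bar{z},\bar{g})$, so $\limsup_{\rho\to0}J_\rho(z_\rho,g_\rho)\le J(\bar{z},\bar{g})$.

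Next I would pass to the limit. The constraint $\|g_\rho-\bar{g}\|_{H^1(I,H)}\le\delta$ bounds $(g_\rho)$ in $H^1_\star(I,H)$, so along a subsequence $g_\rho\rightharpoonup\tilde{g}$ weakly there; since the embedding $H^1(I,H)\hookrightarrow L^2(I,V^*)$ is compact (Aubin--Lions, as used in Lemma~\ref{lemma: convergence of solution operator}), we in fact obtain $g_\rho\to\tilde{g}$ strongly in $L^2(I,V^*)$. Theorem~\ref{theorem: GW glatt} then gives $z_\rho=\mathcal{S}_\rho(g_\rho)\to\mathcal{S}(\tilde{g})=:\tilde{z}$ in $H^1_\star(I,V)$ and in $\mathcal{C}(\bar{I},V)$. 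Combining strong convergence of the states and continuity of $j_1,j_2$ with weak lower semicontinuity of the two $H^1(I,H)$-norm terms, I arrive at
\[
J(\tilde{z},\tilde{g})+\tfrac12\|\tilde{g}-\bar{g}\|_{H^1(I,H)}^2\le\liminf_{\rho\to0}J_\rho(z_\rho,g_\rho)\le J(\bar{z},\bar{g}).
\]

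It remains to identify $\tilde{g}=\bar{g}$. The closed ball $\{g:\|g-\bar{g}\|_{H^1(I,H)}\le\delta\}$ is convex and hence weakly closed, so $\tilde{g}$ is feasible for \eqref{eq: non-smooth problem} and $\tilde{z}=\mathcal{S}(\tilde{g})$. Local optimality of $\bar{g}$ is assumed only on the open ball, so I would test it on the scaled controls $g_s:=\bar{g}+s(\tilde{g}-\bar{g})$, $s\in(0,1)$, obtaining $J(\mathcal{S}(g_s),g_s)\ge J(\bar{z},\bar{g})$; letting $s\to1$ and using continuity of the reduced functional $g\mapsto J(\mathcal{S}(g),g)$ gives $J(\tilde{z},\tilde{g})\ge J(\bar{z},\bar{g})$. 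Fed into the last display this forces $\|\tilde{g}-\bar{g}\|_{H^1(I,H)}=0$, so $\tilde{g}=\bar{g}$ and $\tilde{z}=\bar{z}$. To turn weak into strong convergence I then reinsert $\tilde{g}=\bar{g}$: since $j_1(z_\rho)+j_2(z_\rho(T))\to j_1(\bar{z})+j_2(\bar{z}(T))$, the value bound shows $\limsup_\rho[\tfrac12\|g_\rho\|^2+\tfrac12\|g_\rho-\bar{g}\|^2]\le\tfrac12\|\bar{g}\|^2$, while weak lower semicontinuity gives $\liminf_\rho\tfrac12\|g_\rho\|^2\ge\tfrac12\|\bar{g}\|^2$; together these yield $\|g_\rho-\bar{g}\|_{H^1(I,H)}\to0$. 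The convergence $z_\rho\to\bar{z}$ in $H^1_\star(I,V)$ is already contained in the limit passage, and a subsequence--subsequence argument promotes these to convergence of the whole family, the limit $(\bar{z},\bar{g})$ being uniquely determined.

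The main obstacle is the identification $\tilde{g}=\bar{g}$: because local optimality of $\bar{g}$ is only assumed on the \emph{open} ball of radius $\delta$, one must exclude the limit sitting on the boundary of the constraint, which the scaling/continuity argument above resolves. The second delicate point is that strong convergence of the controls does not follow from weak convergence alone; it is recovered solely through the penalty term $\tfrac12\|g-\bar{g}\|_{H^1(I,H)}^2$, which is exactly why it was added to $J_\rho$.
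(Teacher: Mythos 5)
Your proof is correct and follows essentially the same route as the paper, which simply refers to \cite[Proof of Theorem 4.9]{SWW2016}: feasibility of $(\mathcal{S}_\rho(\bar g),\bar g)$ gives the $\limsup$ bound, compactness plus Theorem \ref{theorem: GW glatt} identifies the limit state, local optimality forces $\tilde g=\bar g$, and the penalty term upgrades weak to strong convergence of the controls. Your extra care about the open versus closed ball (resolved by continuity of the reduced functional along the segment) is a valid and welcome detail.
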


\subsection{Optimality system for the regularized problem}
In this section we will formulate optimality conditions for the regularized problem \eqref{eq: smooth optimal control problem}.
Motivated by the results of Section \ref{chapter: Formal derivation optimality system} the optimality system should include the equations
\begin{align*}
-\dot{q}_\rho  + \sigma \Delta \dot{\xi}_\rho &= \Delta \xi + j_1'(z_\rho),  \\
q_\rho(T) - \sigma \Delta \xi_\rho(T) &= j_2'(z_\rho(T)).
\end{align*}
Moreover, we would like to have an optimality condition corresponding to the inclusion $(-q,\xi)\in N_M(\dot{z},g+ \Delta z + \sigma \Delta \dot{z})$, see \eqref{formal optimality system equations}. Let us formulate a version of this condition for the regularized problem. Define $M_\rho := \operatorname{graph}(\vert \cdot \vert_\rho')\subset \R^2$ and consider

\begin{equation*}
(-q_\rho,\xi_\rho)\in N_{M_\rho}\big(\dot{z}_\rho,g_\rho+ \Delta z_\rho + \sigma \Delta \dot{z}_\rho \big) = N_{M_\rho}\big( \dot{z}_\rho, \vert \dot{z}_\rho \vert_\rho' \big),
\end{equation*}
which is equivalent to the equation
\begin{equation*}
0 = \Big\langle  \begin{pmatrix}
-q_\rho\\
\xi_\rho
\end{pmatrix}
,
\begin{pmatrix}
1\\
\vert \dot{z}_\rho \vert_\rho''
\end{pmatrix}
  \Big\rangle_{\R^2}
  = -q_\rho + \xi_\rho \vert \dot{z}_\rho \vert_\rho''.
\end{equation*}
We introduce the substitution $u_\rho := q_\rho - \sigma \Delta \xi_\rho$ and define the optimality system
\begin{subequations} \label{NOB glatt}
	\begin{align}
	j_1'(z_\rho) + \dot{u}_\rho +\Delta \xi_\rho &= 0 \qquad &\text{in } V^*\; &\text{a.e.\@ on } I, \label{NOB glatt a} \\
	j_2'(z_\rho(T)) - u_\rho(T)&=0 &\text{in } V^*, \label{NOB glatt b}\\
	-\sigma \Delta \xi_\rho + \vert \dot{z_\rho} \vert_\rho'' \xi_\rho &= u_\rho &\text{in } V^*\; &\text{a.e.\@ on } I, \label{NOB glatt c}
	\end{align}
\end{subequations}
which is similar to the system in  \cite{SWW2016} but  with $q_\rho$ replaced by $u_\rho$.
We call $\xi_\rho,u_\rho,q_\rho$ \emph{adjoint variables} and system \eqref{NOB glatt} the \emph{adjoint system}.
First we show that the adjoint system is uniquely solvable (in certain function spaces).

\begin{lemma}
	Let $\rho >0$ and $(z_\rho, g_\rho) \in H^1_\star(I,V)\times H^1_\star(I,H)$ with $z_\rho = \mathcal{S}_\rho(g_\rho)$ be given. Then there exists a unique solution $\bigl( u_\rho,\xi_\rho \bigr)\in H^1(I,V^*)\times L^2(I,V)$ of system \eqref{NOB glatt}. If $d\leq 4$ then $\xi_\rho \in H^1(I,V)$. In this case there exists a constant $C>0$ independent of $\rho,\xi_\rho,u_\rho,z_\rho$ such that

	\begin{equation*}
	\lVert \dot{\xi}_\rho \rVert_{L^2(I,V)} \leq  \frac{2}{\rho^2} \lVert \ddot{z}_\rho \rVert_{L^\infty(I,L^2(\Omega))} \cdot
	\lVert \xi_\rho \rVert_{L^2(I,V)} + \lVert \dot{u}_\rho \rVert_{L^2(I,V^*)} .
	\end{equation*}
\end{lemma}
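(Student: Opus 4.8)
The plan is to analyze the adjoint system \eqref{NOB glatt} by first reducing it to a well-posed problem in the variable $\xi_\rho$ alone, then establishing existence and regularity. From \eqref{NOB glatt c} we read off $u_\rho = -\sigma\Delta\xi_\rho + \vert\dot z_\rho\vert_\rho'' \xi_\rho$, so substituting into \eqref{NOB glatt a} yields a linear second-order (in time) equation for $\xi_\rho$ of the form
\begin{equation*}
-\frac{\d}{\dt}\bigl(-\sigma\Delta\xi_\rho + \vert\dot z_\rho\vert_\rho''\xi_\rho\bigr) + \Delta\xi_\rho = -j_1'(z_\rho) \quad\text{in } V^*,
\end{equation*}
together with the terminal condition coming from \eqref{NOB glatt b}. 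Because time runs backward here (the data is prescribed at $T$), I would substitute $s = T-t$ to obtain a forward-in-time problem and solve it as an abstract linear evolution equation. The coefficient $\vert\dot z_\rho\vert_\rho''$ lies in $L^\infty(I\times\Omega)$ by Assumption \ref{assump: smooth approx}(5) and is nonnegative by Lemma \ref{lemma: properties approx}(2), so the leading operator $-\sigma\Delta + \vert\dot z_\rho\vert_\rho''\operatorname{Id}$ is uniformly coercive on $V$ with modulus $\sigma$, which is the structural ingredient driving all the estimates.

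First I would set up the existence and uniqueness of $(u_\rho,\xi_\rho) \in H^1(I,V^*)\times L^2(I,V)$ via a Galerkin or monotone-operator argument on the reduced equation, using the coercivity of the spatial operator to obtain the a-priori bound $\lVert\xi_\rho\rVert_{L^2(I,V)}$ and then recovering $u_\rho$ from \eqref{NOB glatt c} with \eqref{equation: continuous dependence of derivative}-type estimates; uniqueness follows by testing the homogeneous system against $\xi_\rho$ and exploiting coercivity. Next I would address the higher regularity $\xi_\rho\in H^1(I,V)$ under $d\le 4$. The natural route is to differentiate the reduced equation formally in time (equivalently, use difference quotients in $t$) and test with $\dot\xi_\rho$. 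Differentiating \eqref{NOB glatt c} produces the term $\frac{\d}{\dt}\bigl(\vert\dot z_\rho\vert_\rho''\bigr)\xi_\rho = \vert\dot z_\rho\vert_\rho'''\ddot z_\rho\,\xi_\rho$, and using the Lipschitz bound $\vert\cdot\vert_\rho'' $ Lipschitz with constant $\frac{2}{\rho^2}$ (Assumption \ref{assump: smooth approx}(6)) controls this by $\frac{2}{\rho^2}\vert\ddot z_\rho\vert\,\vert\xi_\rho\vert$, which is exactly the factor appearing in the claimed estimate.

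The dimensional restriction $d\le 4$ is precisely what makes the product terms integrable: after differentiating, the critical term to bound is $\int \vert\ddot z_\rho\vert\,\vert\xi_\rho\vert\,\vert\dot\xi_\rho\vert\,\dx$ (or its analogue), and one needs a Hölder/Sobolev-embedding estimate pairing $\ddot z_\rho\in L^2(\Omega)$ against $\xi_\rho,\dot\xi_\rho\in V = H^1_0(\Omega)\hookrightarrow L^p(\Omega)$. For $d\le 4$ the embedding $V\hookrightarrow L^4(\Omega)$ holds, so that $\vert\ddot z_\rho\vert_{L^2}\,\vert\xi_\rho\vert_{L^4}\,\vert\dot\xi_\rho\vert_{L^4}$ is finite and can be absorbed; hence I would extract the factor $\lVert\ddot z_\rho\rVert_{L^\infty(I,L^2(\Omega))}$ (finite by Lemma \ref{lemma: H2 time} and Remark \ref{rem: H2 in space and time}) and collect the remaining terms into $\lVert\dot u_\rho\rVert_{L^2(I,V^*)}$, yielding the asserted inequality.

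The main obstacle I anticipate is twofold and lies in this last step. First, the time-differentiation argument must be made rigorous through difference quotients rather than assuming $\xi_\rho$ is a priori smooth enough to differentiate, which requires care in passing to the limit in the quotient estimates and in justifying that the terminal data $j_2'(z_\rho(T))$ transfers correctly. Second, and more delicate, is verifying that the product nonlinearity $\vert\dot z_\rho\vert_\rho'''\,\ddot z_\rho\,\xi_\rho$ genuinely lands in $L^2(I,V^*)$ (or pairs correctly against $\dot\xi_\rho\in L^2(I,V)$) only under $d\le 4$; for $d\ge 5$ the Sobolev embedding $V\hookrightarrow L^4$ fails and this term cannot be controlled, which is exactly why the regularity statement is conditional on the dimension. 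I expect the coercivity of $-\sigma\Delta + \vert\dot z_\rho\vert_\rho''\operatorname{Id}$ to handle the leading-order terms cleanly, so the entire difficulty concentrates in estimating this single cross term and tracking the explicit $\frac{2}{\rho^2}$ and $\lVert\ddot z_\rho\rVert_{L^\infty(I,L^2(\Omega))}$ dependence that appears in the conclusion.
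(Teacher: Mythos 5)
Your proposal is essentially sound and, in the decisive regularity step, coincides with the paper's argument: both hinge on difference quotients in time applied to \eqref{NOB glatt c}, the Lipschitz continuity of $\vert\cdot\vert_\rho''$ with constant $\tfrac{2}{\rho^2}$ from Assumption \ref{assump: smooth approx}, and the embedding $V\hookrightarrow L^4(\Omega)$ for $d\le 4$ so that the product $(\dot z_\rho(t+h)-\dot z_\rho(t))\,\xi_\rho(t)$ lands in $L^{4/3}(\Omega)\hookrightarrow V^*$ and Hölder yields the factor $\lVert\ddot z_\rho\rVert_{L^2(\Omega)}\lVert\xi_\rho\rVert_{L^4(\Omega)}$. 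The differences are in framing. For existence, the paper does not assemble an implicit evolution equation $\tfrac{\d}{\dt}\bigl(-\sigma\Delta\xi_\rho+\vert\dot z_\rho\vert_\rho''\xi_\rho\bigr)+\Delta\xi_\rho=-j_1'(z_\rho)$ for $\xi_\rho$; it inverts the coercive elliptic operator via $T_\rho'$ and obtains the explicit terminal-value ODE $\dot u_\rho=-\Delta T_\rho'(g_\rho+\Delta z_\rho)u_\rho-j_1'(z_\rho)$, $u_\rho(T)=j_2'(z_\rho(T))$ in the Banach space $V^*$, solved by a standard existence theorem, with $\xi_\rho(t)=T_\rho'(\cdot)u_\rho(t)$ recovered pointwise — this is cleaner than a Galerkin scheme for your degenerate equation, where the time derivative acts on $A(t)\xi_\rho$ rather than on $\xi_\rho$, and any rigorous treatment would in effect invert $A(t)$ anyway; note also that your equation is first order in time, not second order as you label it. For the estimate, you test the differentiated equation with $\dot\xi_\rho$ (an energy argument giving the $L^2(I,V)$ bound directly), while the paper applies Lax--Milgram to the difference-quotient equation pointwise in $t$ and obtains $\lVert\xi_\rho(t+h)-\xi_\rho(t)\rVert_V\le \tfrac{C}{\rho^2}\lVert\dot z_\rho(t+h)-\dot z_\rho(t)\rVert_{H}\lVert\xi_\rho(t)\rVert_{L^4(\Omega)}+C\lVert u_\rho(t+h)-u_\rho(t)\rVert_{V^*}$ before dividing by $h$ and integrating; both routes deliver the asserted inequality, the pointwise version being marginally stronger. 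One caution: since $\vert\cdot\vert_\rho$ is only assumed $C^2$, the term you write as $\vert\dot z_\rho\vert_\rho'''\ddot z_\rho\,\xi_\rho$ does not literally exist, so the difference-quotient formulation you mention in passing is not optional but mandatory — the Lipschitz bound on $\vert\cdot\vert_\rho''$ is all that is available.
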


\begin{proof}
	Existence and uniqueness can be shown in the same way as in \cite[Proof of Lemma 4.13]{SWW2016}.  Observe that the adjoint system is equivalent to
	\begin{align}
	\dot{u}_\rho(t) &= - \Delta T_\rho'\big(g_\rho(t) + \Delta z_\rho(t)\big)u_\rho(t) - j_1'(z_\rho) &&\text{in } V^* \; \text{a.e.\@ on } I, \label{NOB glatt equiv a}\\
	u_\rho(T) &= j_2'(z_\rho(T)) &&\text{in } V^*, \label{NOB glatt equiv b} \\
	\xi_\rho(t) &= T_\rho'(g_\rho(t) + \Delta z_\rho(t))u_\rho(t) &&\text{in } V \; \text{a.e.\@ on } I. \label{NOB glatt equiv c}
	\end{align}

	The first two equations are an initial value problem in the Banach space $V^*$, which is uniquely solvable, see \cite[Satz 1.3]{GGZ1974}. Hence, the adjoint system is uniquely solvable.
	It remains to prove $\xi_\rho \in H^1(I,V)$ in the case $d\leq 4$. That is, we have to show differentiability of $\xi_\rho$ in time.
	Here, we investigate the differences $\xi_\rho(t+h) - \xi_\rho(t)$. By definition of $\xi_\rho$ we have f.a.a.\@ $t\in I$
	\begin{align*}
	- \sigma \Delta \xi_\rho(t) + \vert \dot{z}_\rho(t) \vert_\rho'' \xi_\rho(t) &= u_\rho(t) \quad &\text{in } V^*, \\
	- \sigma \Delta \xi_\rho(t+h) + \vert \dot{z}_\rho(t+h) \vert_\rho'' \xi_\rho(t+h) &= u_\rho(t+h) \quad &\text{in } V^*.
	\end{align*}

	We subtract these equations from each other, add, and subtract the term $\vert \dot{z}_\rho(t+h)\vert_\rho'' \xi_\rho(t)$ to obtain

	\begin{equation*}
	-\sigma \Delta \bigl(  \xi_\rho(t+h)-\xi_\rho(t) \bigr) + \vert \dot{z}_\rho(t+h)\vert_\rho''  \bigl(  \xi_\rho(t+h)-\xi_\rho(t) \bigr)
	= - \bigl( \vert \dot{z}_\rho(t+h)\vert_\rho''- \dot{z}_\rho(t)\vert_\rho'' \bigr) \xi_\rho(t) + u_\rho(t+h) - u_\rho(t)
	\end{equation*}
By the Lax-Milgram theorem, we get the estimate
	\begin{equation*}
	\big\lVert \xi_\rho(t+h) - \xi_\rho(t) \big\rVert_{V}
	\leq \frac{1}{\sigma}  \big\lVert \bigl( \vert \dot{z}_\rho(t+h)\vert_\rho''- \vert \dot{z}_\rho(t)\vert_\rho'' \bigr) \xi_\rho(t)\big\rVert_{V^*} + \big\lVert u_\rho(t+h) - u_\rho(t) \big\rVert_{V^*}.
	\end{equation*}
	The Lipschitz continuity of  $\vert \cdot \vert_\rho''$, c.f. Assumption \ref{assump: smooth approx}, implies for almost all $t\in I$
	\begin{align*}
	\big\lVert \xi_\rho(t+h) - \xi_\rho(t) \big\rVert_{V}  \leq  \frac{2}{\rho^2} \underbrace{\big\lVert \bigl(  \dot{z}_\rho(t+h)-  \dot{z}_\rho(t) \bigr) \xi_\rho(t) \big\rVert_{L^\frac{4}{3}(\Omega)}}_{=: A_h} + \big\lVert u_\rho(t+h) - u_\rho(t) \big\rVert_{V^*}.
	\end{align*}
	Note that the term $A_h$ is well defined, since $d\leq 4$ implies the embedding $V\hookrightarrow L^4(\Omega)$. Using Hölder's inequality we obtain
	\begin{equation}
	\lVert \xi_\rho(t+h) - \xi_\rho(t) \rVert_{H^1_0(\Omega)}
	\leq \big\lVert \dot{z}_\rho(t+h) - \dot{z}(t) \big\rVert_{L^2(\Omega)} \cdot \ \big\lVert \xi_\rho(t) \big\rVert_{L^4(\Omega)}  + \lVert u_\rho(t+h) - u_\rho(t) \rVert_{V^*}. \label{proof: xi diffbar}
	\end{equation}

%
%

	Since $u_\rho \in H^1(I,V^*)$ and $\dot{z}_\rho \in H^1(I,V) $ the finite differences $\frac{1}{h}\bigl( \xi_\rho(t+h)-\xi_\rho(t)\bigr)$ are bounded in $V$, hence $\xi$ is differentiable a.e.\@ in $I$. We divide inequality \eqref{proof: xi diffbar} by $h$ and obtain the asserted inequality by passing to the limit $h \searrow 0$.
Squaring this inequality and integrating over $I$ proves $\dot{\xi}_\rho \in L^2(I,V)$. Observe that $\xi_\rho \in L^\infty(I,V)$, since $u \in H^1(I,V^*)\hookrightarrow L^\infty(I,V^*)$ and $\lVert \xi_\rho(t)\rVert_V \leq \frac{1}{\sigma} \lVert u(t)\rVert_{V^*}$.
\end{proof}

Given the unique solvability of the adjoint system, we can formulate the optimality conditions for the regularized optimal control problem.

\begin{theorem} \label{theorem: optimality conditions smooth}
	Let $\rho >0$ and $(z_\rho,g_\rho)$ be a local solution of \eqref{eq: smooth optimal control problem} with $\lVert g_\rho - \bar{g} \rVert_{H^1(I,H)} < \delta$. Then there exist unique $\bigl( u_\rho,\xi_\rho \bigr)\in H^1(I,V^*)\times L^2(I,V)$ which is the solution of \eqref{NOB glatt} and
	\begin{align*}
	-2 \ddot{g}_\rho + \ddot{\bar{g}} + 2 g_\rho - \bar{g} + \xi_\rho &= 0 \quad &\text{for a.a.}\;t\in I \; \text{in } H^{-1}(\Omega),\\
	g_\rho(0)&=0 &\text{a.e. in}\; V^*,\\
	2\dot{g}_\rho(T)-\dot{\bar{g}}(T) &= 0 &\text{a.e. in}\; V^*.
	\end{align*}
	holds in the following weak sense: \\

	\begin{equation*}
	\bigl( h,\xi_\rho \bigr)_{L^2(I,H)} + \bigl( 2 g_\rho - \bar{g}, h \bigr)_{H^1(I,H)} = 0 \qquad \forall \; h\in H^1_\star(I,H)
	\end{equation*}
\end{theorem}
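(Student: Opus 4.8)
The statement is a first-order necessary optimality condition for the smooth problem \eqref{eq: smooth optimal control problem}, so the natural route is to apply the generalized Fermat rule to the reduced functional and then identify the resulting variational inequality with the stated adjoint system. First I would reduce the problem: since $z_\rho = \mathcal{S}_\rho(g_\rho)$ and $\mathcal{S}_\rho$ is Fr\'{e}chet differentiable from $H^1_\star(I,H)$ into $W^{1,q}_\star(I,V)$ (by the Corollary following Theorem \ref{theorem: S_rho diffbar}), the reduced objective
\[
\hat{J}_\rho(g) := J(\mathcal{S}_\rho(g),g) + \tfrac{1}{2}\lVert g-\bar{g}\rVert_{H^1(I,H)}^2
\]
is Fr\'{e}chet differentiable on $H^1_\star(I,H)$. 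Because $(z_\rho,g_\rho)$ is a local solution with $\lVert g_\rho - \bar g\rVert_{H^1(I,H)} < \delta$ strictly, the side constraint $\lVert g-\bar g\rVert \le \delta$ is inactive, so $g_\rho$ is an unconstrained local minimizer of $\hat J_\rho$ and hence $\hat J_\rho'(g_\rho) = 0$.

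\textbf{Computing the derivative.} Next I would expand $\hat J_\rho'(g_\rho)h$ for a test direction $h\in H^1_\star(I,H)$ using the chain rule. Writing $\zeta := \mathcal{S}_\rho'(g_\rho)h$, which solves the linearized system \eqref{equation: Ableitung Lösungsoperator}, one gets
\[
\hat J_\rho'(g_\rho)h = j_1'(z_\rho)\zeta + \langle j_2'(z_\rho(T)),\zeta(T)\rangle_{V^*,V} + \bigl(g_\rho, h\bigr)_{H^1(I,H)} + \bigl(g_\rho - \bar g, h\bigr)_{H^1(I,H)} = 0.
\]
The terms in $\zeta$ depend on $h$ only implicitly through the linearized state equation, so the central task is to eliminate $\zeta$. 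This is precisely the purpose of the adjoint variables: I would test the adjoint equations \eqref{NOB glatt} against $\zeta$ (and $\omega = \dot\zeta$) and integrate by parts in time over $I$, using $\zeta(0)=0$ and the terminal conditions $u_\rho(T) = j_2'(z_\rho(T))$. The duality between the forward system \eqref{equation: Ableitung Lösungsoperator} for $(\zeta,\omega)$ and the adjoint system \eqref{NOB glatt} for $(u_\rho,\xi_\rho)$ should collapse the $j_1'(z_\rho)\zeta$ and $j_2'(z_\rho(T))\zeta(T)$ contributions into the single term $(\xi_\rho,\omega)_{L^2(I,H)} = (\xi_\rho,\dot\zeta)_{L^2(I,H)}$, and after a further integration by parts relating $\dot\zeta$ back to $h$ through equation \eqref{NOB glatt c}, into $(h,\xi_\rho)_{L^2(I,H)}$.

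\textbf{Assembling the identity.} Combining the eliminated derivative with the two gradient terms $(g_\rho,h)_{H^1(I,H)} + (g_\rho-\bar g,h)_{H^1(I,H)} = (2g_\rho - \bar g, h)_{H^1(I,H)}$ yields exactly
\[
\bigl(h,\xi_\rho\bigr)_{L^2(I,H)} + \bigl(2g_\rho - \bar g, h\bigr)_{H^1(I,H)} = 0 \qquad \forall\, h\in H^1_\star(I,H),
\]
which is the asserted weak form. The strong-form equations and boundary conditions then follow by formally integrating by parts the $H^1$ inner product (writing $(2g_\rho-\bar g,h)_{H^1} = (2g_\rho - \bar g, h)_{L^2} + (2\dot g_\rho - \dot{\bar g}, \dot h)_{L^2}$ and shifting the derivative off $\dot h$), which produces the interior equation $-2\ddot g_\rho + \ddot{\bar g} + 2g_\rho - \bar g + \xi_\rho = 0$, the terminal condition $2\dot g_\rho(T) - \dot{\bar g}(T) = 0$ from the boundary term at $t=T$, and $g_\rho(0)=0$ from the space $H^1_\star$. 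Existence and uniqueness of $(u_\rho,\xi_\rho)$ in $H^1(I,V^*)\times L^2(I,V)$ is already guaranteed by the preceding lemma, so that part requires no new work.

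\textbf{Main obstacle.} The delicate point is the integration-by-parts / duality argument eliminating $\zeta$, since the forward and adjoint systems live in the dual pairing $V \hookrightarrow H \hookrightarrow V^*$ and one must carefully track the weak time-derivatives and the boundary terms at $t=0$ and $t=T$ to match the correct terminal conditions in \eqref{NOB glatt b}. In particular, justifying the integration by parts requires the regularity $z_\rho \in H^2_\star(I,V)$ and $\xi_\rho \in L^2(I,V)$, $u_\rho \in H^1(I,V^*)$ so that all pairings are well-defined, and one must verify that the bilinear coupling through the coefficient $\vert \dot z_\rho\vert_\rho''$ in \eqref{equation: Ableitung Lösungsoperator} and \eqref{NOB glatt c} is symmetric enough for the adjoint identity to close exactly.
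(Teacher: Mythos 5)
Your proposal is correct and follows essentially the same route as the paper, which simply defers to \cite[Proof of Theorem 4.14]{SWW2016}: that proof likewise exploits that the ball constraint is inactive, applies the Fermat rule to the differentiable reduced functional via Theorem \ref{theorem: S_rho diffbar}, and eliminates the linearized state $\zeta$ by testing the adjoint system \eqref{NOB glatt} against $(\zeta,\omega)$ and using the symmetry of $-\sigma\Delta + \vert\dot z_\rho\vert_\rho''$. Your duality computation closes exactly as you anticipate, yielding $(\xi_\rho,h)_{L^2(I,H)}$ plus the two $H^1$ gradient terms.
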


\begin{proof}
The proof is exactly as \cite[Proof of Theorem 4.14]{SWW2016}.
\end{proof}

\begin{remark} \label{remark j1}
	Recall that $j_1$ is a mapping from $L^2(I,V)$ to $\R$, which means it holds

	\begin{equation*}
	j_1'(z_\rho) \in \bigl[L^2(I,V)\bigr]^* \cong L^2(I,V^*),
	\end{equation*}
In the following, we will always denote by  $j_1'(z_\rho) $ the representative in $L^2(I,V^*)$.
\end{remark}

We would like to prove that the optimality conditions converge for $\rho \searrow 0$ to the equations that we derived in Section \ref{chapter: Formal derivation optimality system}.
To this end we have to show that $\xi_\rho, u_\rho$ converge (weakly) in suitable function spaces. Hence our next aim is to prove some boundedness properties.

\begin{lemma}
Let $g_\rho \in H^1(I,H)$ and $z_\rho := \mathcal{S}_\rho(g_\rho) $. Let further $(u_\rho,\xi_\rho)$ be the unique solution of \eqref{NOB glatt}. Then we have f.a.a. $t\in I$ the estimates

\begin{subequations}
	\begin{align}
		\lVert u_\rho (t)\rVert_{V^*} &\leq e^{\frac{1}{\sigma }T} \Big[ \lVert j_1'(z_\rho)  \rVert_{L^1(I,V^*)} + \lVert j_2'(z_\rho(T)) \rVert_{V^*}  \Big], \\
		\lVert \dot{u}_\rho(t) \rVert_{V^*}  &\leq \big\lVert \big[ j_1'(z_\rho)\big](t) \rVert_{V^*} + \lVert \xi_\rho(t) \big\rVert_V , \\
		\frac{\sigma}{2} \lVert \xi_\rho(t) \rVert_V^2 + \intom \vert \dot{z}_\rho(t) \vert_\rho'' \xi_\rho(t)^2 \, \dx
		&\leq e^{\frac{1}{\sigma}T} \frac{1}{2 \sigma} \Big[ \lVert j_1'(z_\rho)  \rVert_{L^2(I,V^*)} + \lVert j_2'(z_\rho(T)) \rVert_{V^*}  \Big].
	\end{align}
\end{subequations}

\end{lemma}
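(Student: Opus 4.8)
The plan is to prove the three a priori estimates directly from the equivalent formulation \eqref{NOB glatt equiv a}--\eqref{NOB glatt equiv c} of the adjoint system, using Gronwall's inequality for the $u_\rho$-estimates and an energy/testing argument for the $\xi_\rho$-estimate. Throughout I will exploit the two key structural facts already established: first, the estimate $\lVert T_\rho'(v)h \rVert_V \le \frac{1}{\sigma}\lVert h \rVert_{V^*}$ from Theorem \ref{theorem: Trho diffbar}, which controls the solution operator of \eqref{NOB glatt c}; and second, the nonnegativity $\vert \dot z_\rho \vert_\rho'' \ge 0$ from Lemma \ref{lemma: properties approx}.

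For the first estimate I would work with the backward initial value problem \eqref{NOB glatt equiv a}--\eqref{NOB glatt equiv b}. Substituting \eqref{NOB glatt equiv c} into the $\xi_\rho$-term of \eqref{NOB glatt a}, the quantity $\Delta \xi_\rho = \Delta T_\rho'(g_\rho + \Delta z_\rho)u_\rho$ is bounded in $V^*$ by $\lVert \xi_\rho\rVert_V \le \frac{1}{\sigma}\lVert u_\rho\rVert_{V^*}$, using that $\lVert \Delta \cdot\rVert_{V^*} = \lVert \cdot\rVert_V$. Integrating \eqref{NOB glatt equiv a} backward from $T$ and taking $V^*$-norms gives
\begin{equation*}
\lVert u_\rho(t)\rVert_{V^*} \le \lVert j_2'(z_\rho(T))\rVert_{V^*} + \lVert j_1'(z_\rho)\rVert_{L^1(I,V^*)} + \frac{1}{\sigma}\intg{t}{T}\lVert u_\rho(s)\rVert_{V^*}\ds,
\end{equation*}
and Gronwall's inequality (in the backward direction on $[t,T]$) yields the asserted bound with the factor $e^{\frac{1}{\sigma}T}$. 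The second estimate then follows immediately by reading off $\lVert \dot u_\rho(t)\rVert_{V^*}$ from \eqref{NOB glatt a}, bounding $\lVert \Delta \xi_\rho(t)\rVert_{V^*} = \lVert \xi_\rho(t)\rVert_V$.

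For the third estimate I would return to the differential form \eqref{NOB glatt c} and test it with $\dot\xi_\rho$ (legitimate in the case $\xi_\rho \in H^1(I,V)$ from the preceding lemma, with the general case obtained by the usual density/approximation argument). This produces the energy identity whose left-hand side, after integration in time and using $\frac{\d}{\dt}\intom \vert \dot z_\rho\vert_\rho'' \xi_\rho^2\dx$, reconstructs $\frac{\sigma}{2}\lVert \xi_\rho(t)\rVert_V^2 + \intom \vert \dot z_\rho(t)\vert_\rho''\xi_\rho(t)^2\dx$; the right-hand side is controlled by $\lVert u_\rho\rVert$ and $\lVert \dot u_\rho\rVert$, which feed back the first two estimates. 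The main obstacle I anticipate is handling the time derivative of the coefficient $\vert \dot z_\rho\vert_\rho''$, which appears when differentiating the energy and is where the troublesome factor $\frac{2}{\rho^2}\lVert\ddot z_\rho\rVert$ would normally enter; the point is that in this particular energy estimate that term should combine with a matching term of opposite sign (coming from testing) so that it cancels, leaving only the $\frac{1}{2\sigma}$-factor and the $L^2(I,V^*)$-norms of the data. Verifying that this cancellation occurs cleanly — rather than leaving a residual $\rho$-dependent term — is the delicate part, and it hinges crucially on $\sigma>0$ providing the coercive $\frac{\sigma}{2}\lVert\xi_\rho\rVert_V^2$ term on the left.
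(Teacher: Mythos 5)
Your treatment of the first two estimates is correct and coincides with the paper's proof: integrate \eqref{NOB glatt equiv a} backward from $T$, use $\lVert \Delta T_\rho'(\cdot)u_\rho\rVert_{V^*}=\lVert T_\rho'(\cdot)u_\rho\rVert_{V}\le \frac{1}{\sigma}\lVert u_\rho\rVert_{V^*}$, and apply Gronwall; the bound on $\dot u_\rho$ is then read off from \eqref{NOB glatt a}.

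The third estimate is where your proposal breaks down. Testing \eqref{NOB glatt c} with $\dot\xi_\rho$ and integrating in time produces the term
\begin{equation*}
\intom \vert \dot z_\rho\vert_\rho''\,\xi_\rho\,\dot\xi_\rho \dx
= \frac{1}{2}\frac{\d}{\dt}\intom \vert \dot z_\rho\vert_\rho''\,\xi_\rho^2\dx
- \frac{1}{2}\intom \Bigl(\frac{\d}{\dt}\vert \dot z_\rho\vert_\rho''\Bigr)\xi_\rho^2\dx ,
\end{equation*}
and the last integral, which carries the factor $\frac{\d}{\dt}\vert\dot z_\rho\vert_\rho''$ of size up to $\frac{2}{\rho^2}\vert\ddot z_\rho\vert$, has no matching term of opposite sign anywhere in the identity: the only other contributions are $\frac{\sigma}{2}\frac{\d}{\dt}\lVert\xi_\rho\rVert_V^2$ and $\langle u_\rho,\dot\xi_\rho\rangle$. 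The cancellation you are hoping for does not occur, so this route leaves a residual $\rho$-dependent term (exactly the $\frac{2}{\rho^2}\lVert\ddot z_\rho\rVert$ quantity you flag), and in addition it would only yield a time-integrated inequality rather than the asserted pointwise-in-$t$ bound, while feeding $\lVert\dot u_\rho\rVert$ back into the right-hand side is circular since the second estimate itself contains $\lVert\xi_\rho(t)\rVert_V$. The paper's argument is much simpler and avoids all of this: for fixed $t$, equation \eqref{NOB glatt c} is an elliptic equation in space, and testing it with $\xi_\rho(t)$ itself gives
\begin{equation*}
\sigma\lVert\xi_\rho(t)\rVert_V^2 + \intom \vert\dot z_\rho(t)\vert_\rho''\,\xi_\rho(t)^2\dx
= \langle u_\rho(t),\xi_\rho(t)\rangle_{V^*,V}
\le \frac{\sigma}{2}\lVert\xi_\rho(t)\rVert_V^2 + \frac{1}{2\sigma}\lVert u_\rho(t)\rVert_{V^*}^2 ,
\end{equation*}
after which one absorbs the first term on the right and inserts the already proven bound on $\lVert u_\rho(t)\rVert_{V^*}$. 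No time derivative of $\xi_\rho$ or of the coefficient $\vert\dot z_\rho\vert_\rho''$ is ever needed, and the nonnegativity of $\vert\dot z_\rho\vert_\rho''$ is used only to keep the second term on the left. You should replace your energy-in-time argument by this pointwise elliptic test.
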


\begin{proof}
	Integrating \eqref{NOB glatt equiv a} from $t$ to $T$ we obtain

	\begin{equation*}
	u_\rho(T) - u_\rho(t) = - \Delta \int\limits_t^T T_\rho'(g_\rho(s)+ \Delta z_\rho(s) ) u_\rho(s) - \big[j_1'(z_\rho)\big](s) \, \ds
	.
 	\end{equation*}
	Using \eqref{NOB glatt equiv b} we obtain the estimate

	\begin{equation*}
	\lVert u_\rho(t) \rVert_{V^*} \leq \lVert j_2'(z_\rho(T)) \rVert_{V^*}  + \int\limits_t^T \lVert j_1'(z_\rho) \rVert_{L^1(I,V^*)} + \frac{1}{\sigma} \int\limits_t^T \lVert u_\rho(s) \rVert_{V^*},
	\end{equation*}
	and Gronwall's inequality yields the first inequality. The second one follows immediately from \eqref{NOB glatt a}.
	In order to prove the third estimate we test \eqref{NOB glatt a} with $\xi_\rho(t)$ and use Young's inequality:
	\begin{align*}
	\sigma \big\lVert \xi_\rho(t)\big\lVert_V^2 + \int\limits_\Omega\xi_\rho(t)^2 \vert \dot{z}_\rho(t) \vert_\rho'' \; \dx
	&= \langle u_\rho(t) ,\xi_\rho(t) \rangle_{V^*,V} \leq \frac{\sigma}{2}  \big\lVert  \xi_\rho  \big\rVert_V^2 + \frac{1}{2\sigma}  \big\lVert u_\rho(t)  \big\rVert_{V^*}^2,
	\end{align*}
which finishes the proof.
\end{proof}

This lemma gives us some boundedness properties, which we will collect next.
Recall that we defined in the beginning of the section $q_\rho = u_\rho + \sigma \Delta \xi_\rho$.

\begin{corollary} \label{cor: Beschränktheit Adjungiert}
	Let the family $\{z_\rho,g_\rho\}_{\rho >0}$ with $z_\rho := \mathcal{S}_\rho(g_\rho)$ be bounded in $H^1_\star(I,V)\times H^1_\star(I,H)$. Then there exists a constant $C>0$ independent of $\rho,g_\rho, z_\rho,\xi_\rho, u_\rho$ such that
	\begin{align*}
	&\lVert j_1'(z_\rho)\rVert_{L^2(I,V^*)}, \quad
	\lVert j_2'(z_\rho(T)) \rVert_{V},\\
	&\lVert \xi_\rho  \rVert_{L^\infty(I,V)},
	\quad \lVert u_\rho  \rVert_{W^{1,\infty}(I,V^*)}, \quad \lVert q_\rho  \rVert_{L^\infty(I,V^*)}, \\
	&\Big\lVert  \vert  \dot{z}_\rho \vert_\rho'' \xi_\rho^2 \Big\rVert_{L^\infty(I,L^1(\Omega))}
	\end{align*}
	are less then $C$.
\end{corollary}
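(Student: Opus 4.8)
The plan is to collect the estimates derived in the preceding lemma together with the hypotheses on $j_1,j_2$ and the boundedness of $\{z_\rho,g_\rho\}$, and to show that each listed quantity is bounded by a constant independent of $\rho$. First I would deal with the terms involving $j_1'$ and $j_2'$. Since $j_1$ is continuously Fr\'echet differentiable on $L^2(I,V)$ and $j_2$ on $V$, their derivatives are locally bounded; because $\{z_\rho\}$ is bounded in $H^1_\star(I,V)\hookrightarrow L^2(I,V)$ and $\{z_\rho(T)\}$ is bounded in $V$ (using the continuous embedding $H^1_\star(I,V)\hookrightarrow C(\bar I,V)$), continuity of $z\mapsto j_1'(z)$ and $z\mapsto j_2'(z(T))$ yields uniform bounds on $\lVert j_1'(z_\rho)\rVert_{L^2(I,V^*)}$ and $\lVert j_2'(z_\rho(T))\rVert_{V}$. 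These two bounds are the only inputs the previous lemma needs.

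Next I would feed these into the three estimates of the previous lemma. The first estimate immediately gives a uniform $L^\infty(I,V^*)$-bound on $u_\rho$, and combined with the $L^1$-in-time bound it also controls $\lVert u_\rho\rVert_{L^\infty(I,V^*)}$. For the third estimate, the right-hand side is uniformly bounded, so taking the essential supremum over $t$ produces a uniform bound on $\lVert \xi_\rho\rVert_{L^\infty(I,V)}$ (from the $\frac{\sigma}{2}\lVert\xi_\rho(t)\rVert_V^2$ term) and simultaneously on $\big\lVert \vert\dot z_\rho\vert_\rho''\,\xi_\rho^2\big\rVert_{L^\infty(I,L^1(\Omega))}$ (from the nonnegative integral term, which is nonnegative by Lemma \ref{lemma: properties approx}). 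Then the second estimate bounds $\lVert \dot u_\rho(t)\rVert_{V^*}$ by $\lVert [j_1'(z_\rho)](t)\rVert_{V^*}+\lVert\xi_\rho(t)\rVert_V$; taking $L^\infty$ in $t$ and using the $L^\infty(I,V)$-bound on $\xi_\rho$ already obtained, together with the $L^\infty(I,V^*)$-bound on $u_\rho$, yields $\lVert u_\rho\rVert_{W^{1,\infty}(I,V^*)}\le C$. (One mild point is that the listed second estimate uses $\lVert[j_1'(z_\rho)](t)\rVert_{V^*}$ pointwise in $t$; I would rather argue the $W^{1,\infty}$-bound on $u_\rho$ directly from the equivalent form \eqref{NOB glatt equiv a}, since $\Delta T_\rho'(\cdot)u_\rho$ and $j_1'(z_\rho)$ live in the right spaces, avoiding any pointwise regularity issue for $j_1'$.)

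It remains to bound $\lVert q_\rho\rVert_{L^\infty(I,V^*)}$. By definition $q_\rho=u_\rho+\sigma\Delta\xi_\rho$, and equation \eqref{NOB glatt c} reads $-\sigma\Delta\xi_\rho+\vert\dot z_\rho\vert_\rho''\,\xi_\rho=u_\rho$, whence $\sigma\Delta\xi_\rho=\vert\dot z_\rho\vert_\rho''\,\xi_\rho-u_\rho$ and therefore $q_\rho=u_\rho+\vert\dot z_\rho\vert_\rho''\,\xi_\rho-u_\rho=\vert\dot z_\rho\vert_\rho''\,\xi_\rho$. Thus $q_\rho$ equals $\vert\dot z_\rho\vert_\rho''\,\xi_\rho$, and I must bound this in $V^*$ uniformly. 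The difficulty here is exactly the factor $\vert\dot z_\rho\vert_\rho''$, which by Assumption \ref{assump: smooth approx} is only bounded by $2/\rho$, so a crude estimate blows up as $\rho\searrow0$. The key is instead to use the already-established $L^\infty(I,L^1(\Omega))$-bound on $\vert\dot z_\rho\vert_\rho''\,\xi_\rho^2$. Writing $\vert\dot z_\rho\vert_\rho''\,\xi_\rho=\big(\vert\dot z_\rho\vert_\rho''\big)^{1/2}\cdot\big(\vert\dot z_\rho\vert_\rho''\big)^{1/2}\xi_\rho$ and testing against $v\in V$, Cauchy--Schwarz in $L^2(\Omega)$ gives
\[
\big\vert\langle q_\rho(t),v\rangle\big\vert
\le \Big(\!\intom \vert\dot z_\rho\vert_\rho''\,v^2\,\dx\Big)^{1/2}\Big(\!\intom \vert\dot z_\rho\vert_\rho''\,\xi_\rho^2\,\dx\Big)^{1/2}.
\]
The second factor is uniformly bounded; for the first I would again invoke the pointwise bound $\vert\dot z_\rho\vert_\rho''\le 2/\rho$ only as a last resort, which is unsatisfactory, so I expect the intended route is to exploit the bound $\vert v\vert_\rho''\,v^2\le 2\rho$ from Lemma \ref{lemma: properties approx} or the structure $q_\rho=u_\rho+\sigma\Delta\xi_\rho$ with the $L^\infty(I,V)$-bound on $\xi_\rho$ directly: since $\Delta:V\to V^*$ is bounded, $\lVert\sigma\Delta\xi_\rho\rVert_{V^*}\le\sigma\lVert\xi_\rho\rVert_V$ is uniformly controlled, and $\lVert u_\rho\rVert_{V^*}$ is already bounded, so $\lVert q_\rho\rVert_{L^\infty(I,V^*)}\le\lVert u_\rho\rVert_{L^\infty(I,V^*)}+\sigma\lVert\xi_\rho\rVert_{L^\infty(I,V)}\le C$. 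This last line is the clean argument and is the main step to get right; the remaining bounds are straightforward consequences of the previous lemma.
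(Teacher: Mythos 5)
Your proposal is correct and follows the route the paper intends: the corollary is an immediate consequence of the preceding lemma's three estimates combined with uniform bounds on $j_1'(z_\rho)$ and $j_2'(z_\rho(T))$, and the bound on $q_\rho$ is exactly the clean triangle-inequality estimate $\lVert q_\rho\rVert_{V^*}\le\lVert u_\rho\rVert_{V^*}+\sigma\lVert \xi_\rho\rVert_{V}$ that you settle on after the (correct but unnecessary) detour through the identity $q_\rho=\vert\dot z_\rho\vert_\rho''\,\xi_\rho$. The one caveat you rightly flag --- that the $W^{1,\infty}(I,V^*)$ bound on $\dot u_\rho$ requires $\lVert[j_1'(z_\rho)](t)\rVert_{V^*}$ to be controlled uniformly in $t$, whereas $j_1'(z_\rho)$ is a priori only in $L^2(I,V^*)$ --- is an issue already present in the paper's own statement rather than one introduced by your argument.
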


\subsection{Optimality system for the non-smooth problem}
\label{sec74}

In this section we analyze the optimality system for $\rho \searrow 0$. 
We start with a lemma that shows a weak formulation of the optimality conditions \eqref{formal optimality system cone a}, \eqref{formal optimality system cone e} and corresponds to \cite[Lemma 5.1]{SWW2016}.

\begin{lemma} \label{lemma: optimality condition1}
	Let $(z_\rho )_{\rho>0} \in H^1_\star(I,V) $ with $z_\rho \to z$ in $H^1_\star(I,V)$. Let further $(u_\rho,\xi_\rho )_{\rho>0}$ be the corresponding unique solutions of the adjoint system \eqref{NOB glatt} and define $q_\rho := u_\rho + \sigma \Delta \xi_\rho$. Then there exists a function $q\in L^\infty(I,V^*)$
	and a subsequence of $q_\rho$, which we denote again by $q_\rho$, such that $q_\rho \rightharpoonup^* q$ in $L^\infty(I,V^*)$ for $\rho \searrow 0$. Furthermore, for a.a. $t\in I$ we have

	\begin{equation*}
	\langle q(t),\phi \vert \dot{z}(t)\vert \rangle_{V^*,V} = 0 \qquad \forall \phi \in C_0^\infty(\Omega).
	\end{equation*}

\end{lemma}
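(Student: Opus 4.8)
The plan is to exploit the algebraic identity hidden in the adjoint system. From the definition $q_\rho = u_\rho + \sigma\Delta\xi_\rho$ and the third adjoint equation \eqref{NOB glatt c}, namely $-\sigma\Delta\xi_\rho + \vert\dot z_\rho\vert_\rho''\xi_\rho = u_\rho$, the Laplacian terms cancel and one obtains the pointwise-in-space representation $q_\rho = \vert\dot z_\rho\vert_\rho''\,\xi_\rho$. Since the convergent sequence $(z_\rho)$ is bounded in $H^1_\star(I,V)$ — which is exactly what controls $j_1'(z_\rho)$ and $j_2'(z_\rho(T))$ — the a-priori estimates of Corollary \ref{cor: Beschränktheit Adjungiert} apply and give a bound on $\norm{q_\rho}_{L^\infty(I,V^*)}$ independent of $\rho$. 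As $L^\infty(I,V^*)$ is the dual of the separable space $L^1(I,V)$, this yields a weakly-$*$ convergent subsequence $q_\rho\rightharpoonup^* q$ with $q\in L^\infty(I,V^*)$, which settles the first assertion.

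For the orthogonality relation, the key estimate is that the pairing of $q_\rho(t)$ with the test direction built from $\dot z_\rho$ itself is negligible. First I would test $q_\rho(t)=\vert\dot z_\rho(t)\vert_\rho''\xi_\rho(t)\in H$ against $\phi\vert\dot z_\rho(t)\vert\in V$ and split the integrand as $\big(\sqrt{\vert\dot z_\rho\vert_\rho''}\,\xi_\rho\big)\big(\sqrt{\vert\dot z_\rho\vert_\rho''}\,\phi\vert\dot z_\rho\vert\big)$; Cauchy--Schwarz then gives
\[
\big\vert \la q_\rho(t),\phi\vert\dot z_\rho(t)\vert\ra_{V^*,V}\big\vert
\le \Big(\intom \vert\dot z_\rho(t)\vert_\rho''\,\xi_\rho(t)^2\dx\Big)^{1/2}
\Big(\intom \vert\dot z_\rho(t)\vert_\rho''\,\phi^2\,\dot z_\rho(t)^2\dx\Big)^{1/2}.
\]
The first factor is bounded by a constant, again by Corollary \ref{cor: Beschränktheit Adjungiert} (the bound on $\norm{\vert\dot z_\rho\vert_\rho''\xi_\rho^2}_{L^\infty(I,L^1(\Omega))}$), while Lemma \ref{lemma: properties approx}(5), i.e.\ $\vert v\vert_\rho''v^2\le 2\rho$, bounds the second factor by $\norm{\phi}_{L^\infty(\Omega)}\sqrt{2\rho\vert\Omega\vert}$. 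Hence $\la q_\rho(t),\phi\vert\dot z_\rho(t)\vert\ra\to 0$ as $\rho\searrow 0$, uniformly in $t$.

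It then remains to replace $\vert\dot z_\rho\vert$ by the limit $\vert\dot z\vert$ and to pass to the weak-$*$ limit. Since $z_\rho\to z$ in $H^1_\star(I,V)$ we have $\dot z_\rho\to\dot z$ in $L^2(I,V)$; using the continuity of the map $v\mapsto\vert v\vert$ on $V$ (Stampacchia) together with the fact that this map preserves the $V$-norm (as $\vert\nabla\vert v\vert\vert=\vert\nabla v\vert$ a.e.), a weak-convergence-plus-norm-convergence argument in the Hilbert space $L^2(I,V)$ shows $\vert\dot z_\rho\vert\to\vert\dot z\vert$, and therefore $\phi\vert\dot z_\rho\vert\to\phi\vert\dot z\vert$ in $L^2(I,V)\hookrightarrow L^1(I,V)$. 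For arbitrary $\psi\in L^\infty(I)$ and $\phi\in C_0^\infty(\Omega)$ the function $\psi\,\phi\vert\dot z\vert$ lies in $L^1(I,V)$, so weak-$*$ convergence gives $\int_I\psi\la q_\rho,\phi\vert\dot z\vert\ra\dt\to\int_I\psi\la q,\phi\vert\dot z\vert\ra\dt$. Writing $\phi\vert\dot z\vert=\phi\vert\dot z_\rho\vert+\phi(\vert\dot z\vert-\vert\dot z_\rho\vert)$, the first piece vanishes by the uniform estimate of the previous paragraph and the second by $\norm{q_\rho}_{L^\infty(I,V^*)}\,\norm{\phi(\vert\dot z\vert-\vert\dot z_\rho\vert)}_{L^1(I,V)}\to 0$. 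Thus $\int_I\psi(t)\la q(t),\phi\vert\dot z(t)\vert\ra\dt=0$ for every $\psi$, whence $\la q(t),\phi\vert\dot z(t)\vert\ra=0$ for a.a.\ $t$; running this over a countable dense set of $\phi$ and using continuity in $\phi$ for fixed $t$ (recall $\dot z(t)\in V$ a.e.) yields the claim for all $\phi\in C_0^\infty(\Omega)$ simultaneously.

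The main obstacle I anticipate is the interchange of the $\rho$-dependence between the functional $q_\rho$ and the test direction: the estimate forcing the pairing to zero works only with $\phi\vert\dot z_\rho\vert$, so that the factor $\dot z_\rho^2$ appears and $\vert v\vert_\rho''v^2\le 2\rho$ can be invoked, whereas the weak-$*$ limit is naturally taken against the fixed function $\phi\vert\dot z\vert$. Bridging the two requires the strong $L^2(I,V)$-convergence $\vert\dot z_\rho\vert\to\vert\dot z\vert$, which rests on the non-obvious continuity of $v\mapsto\vert v\vert$ on $H^1_0(\Omega)$. It is essential here that one has a $V^*$-bound on $q_\rho$ rather than an $H$-bound (the latter would blow up like $\rho^{-1}$), since this is precisely what makes $V$-convergence of the test direction the correct and available tool.
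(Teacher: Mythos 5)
Your proof is correct and follows essentially the same route as the paper: the identity $q_\rho=\vert\dot z_\rho\vert_\rho''\xi_\rho$, the weak-$*$ compactness from the uniform $L^\infty(I,V^*)$ bound of Corollary \ref{cor: Beschränktheit Adjungiert}, the Cauchy--Schwarz splitting with $\sqrt{\vert\cdot\vert_\rho''}$ combined with $\vert v\vert_\rho''v^2\le 2\rho$, and the final limit passage pairing weak-$*$ convergence of $q_\rho$ against strong $L^2(I,V)$-convergence of the test functions. If anything, you are more careful than the paper in justifying $\vert\dot z_\rho\vert\to\vert\dot z\vert$ in $L^2(I,V)$ and the ``for a.a.\ $t$, for all $\phi$'' quantifier order, both of which the paper leaves implicit.
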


\begin{proof}
	Let $\phi \in C_0^\infty(\Omega)$ and $\eta \in C_0^\infty(I)$ be given.
	Testing the adjoint equation $q_\rho(t) = \vert \dot{z}_\rho(t) \vert_\rho''\xi_\rho(t)$ with $\phi \;\vert \dot{z}_\rho(t) \vert \; \eta(t) $ we obtain for almost all $t\in I$
	\begin{equation*}
	\big\vert \langle q_\rho(t),\phi \, \vert \dot{z}_\rho(t)\vert\, \eta(t) \rangle_{V^*,V} \big\vert
 \leq \lVert \phi \rVert_{L^\infty(\Omega)} \cdot \lVert \eta \rVert_{L^\infty(I)} \cdot \Big\lVert \sqrt{\vert \dot{z}_\rho(t) \vert_\rho''} \; \dot{z}_\rho(t)  \Big\rVert_{H} \cdot \Big\lVert \sqrt{\vert \dot{z}_\rho(t) \vert_\rho''} \;  \xi_\rho(t)  \Big\rVert_{H}.
	\end{equation*}
	By Lemma \ref{lemma: properties approx} we have $\sqrt{\vert \dot{z}_\rho(t,x) \vert_\rho''} \; \vert \dot{z}_\rho(t,x) \vert \leq \sqrt{2\rho}$. Furthermore, we know due to Corollary \ref{cor: Beschränktheit Adjungiert} that
	\begin{equation*}
\Big\lVert \sqrt{\vert \dot{z}_\rho \vert_\rho''}  \xi_\rho \Big\rVert_{L^\infty(I,H)}
	\end{equation*}
	is bounded. Hence we can pass to the limit
	\begin{equation} \label{proof: NOB a,e 1}
	\Big\vert \langle q_\rho,\vert \dot{z}_\rho\vert \, \phi\, \eta \rangle_{L^2(I,V^*),L^2(I,V)} \Big\vert
	\leq T \sqrt{2\rho} \lVert \phi \rVert_{L^\infty(\Omega)} \cdot \lVert \eta \rVert_{L^\infty(I)} \cdot \Big\lVert \sqrt{\vert \dot{z}_\rho \vert_\rho''} \;  \xi_\rho  \Big\rVert_{L^\infty(I,H)}
	 \overset{\rho\searrow 0}{\longrightarrow} 0.
	\end{equation}
Since $q_\rho \rightharpoonup^* q$ in $L^\infty(I,V^*)$  and $\dot{z}_\rho \to \dot{z}$ in $L^2(I,V)$ we have
	\begin{equation*}
	\langle q_\rho,\vert \dot{z}_\rho\vert \, \phi\, \eta \rangle_{L^2(I,V^*),L^2(I,V)}
	\longrightarrow
	\langle q,\vert \dot{z}\vert\, \phi\, \eta \rangle_{L^2(I,V^*),L^2(I,V)},
	\end{equation*}
which proves the assertion.
\end{proof}

\begin{remark} \label{remark: weak cone a,e}
	The corresponding result in \cite[Lemma 5.1]{SWW2016} is

	\begin{equation*}
	\int\limits_{I} \langle q(t), \phi(t) \vert \dot{z}(t) \vert \rangle_{H^{-1}(\Omega),H^1_0(\Omega)}\dt = 0 \qquad \forall \phi \in \mathcal{C}_0^\infty(I\times \Omega),
	\end{equation*}
	which is weaker than our result, since we have the equality pointwise f.a.a. $t\in I$.
\end{remark}

By summarizing our previous results we obtain an optimality system for the non-smooth problem. The next theorem, which is our main result, corresponds to \cite[Theorem 5.2]{SWW2016}.

\begin{theorem} \label{theorem: optimality system nonsmooth}
	Let $(\bar{z},\bar{g})\in H^1_\star(I,V) \times H^1_\star(I,H)$ be a local solution of \eqref{eq: non-smooth problem} and $p\in (1,\infty)$. Then there exist adjoint variables  $u \in W^{1,p}(I,V^*)$ and $\xi \in L^\infty(I,V)$. Define $q := u + \sigma \Delta \xi$. Then we have
	\begin{subequations}
		\begin{align}
		&\begin{cases}
		\dot{u} &= -\Delta \xi - j_1'(\bar{z}),\\
		u(T) &= j_2'(\bar{z}(T)),
		\end{cases} \label{NOB nonsmooth1}
		\\
		&\begin{cases}
		-\ddot{\bar{g}} +\bar{g} +\xi &= 0, \\
		\bar{g}(0) &= 0, \\
		\dot{\bar{g}}(T) &= 0,
		\end{cases} \label{NOB nonsmooth2}
		\\
		&\langle q, \phi \vert \dot{\bar{z}}\vert \rangle_{V^*,V} = 0 \qquad \forall \; \phi \in \mathcal{C}_0^\infty(\Omega), \label{NOB nonsmooth3}\\
		&\langle u(t), \xi(t) \rangle_{V^*,V} \geq   \sigma \lVert \xi(t) \rVert_V^2 \qquad \text{f.a.a.\@ } t\in I. \label{NOB nonsmooth4}\\
		&\langle q(t), \xi(t) \rangle_{V^*,V} \geq 0 \qquad \text{f.a.a.\@ } t\in I. \label{NOB nonsmooth5}
		\end{align}
	\end{subequations}
	The system \eqref{NOB nonsmooth1} has to be understood as an initial value problem in the Banach space $V^*$, which is equivalent to the equation
	\begin{equation} \label{NOB nonsmooth system 1}
	u(t) = j_2'(\bar{z}(T)) + \int\limits_{t}^{T} \Delta \xi(s)\ds + \int\limits_{t}^{T} \big[j_1'(\bar{z})\big](s)\ds \quad \text{in } V^* \quad \text{for a.a.}\; t\in I.
	\end{equation}
	The system \eqref{NOB nonsmooth2} holds in the following weak sense:
	\begin{equation} \label{NOB nonsmooth system 2}
	\bigl( \xi, h \bigr)_{L^2(I,H)} + \bigl(\bar{g},h\bigr)_{H^1(I,H)} = 0 \qquad \forall \, h \in H^1_\star(I,H).
	\end{equation}
\end{theorem}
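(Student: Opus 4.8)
The plan is to pass to the limit $\rho\searrow 0$ in the regularized optimality system of Theorem~\ref{theorem: optimality conditions smooth}. First I would apply Theorem~\ref{theorem: convergence solutions} to obtain a family of local solutions $(z_\rho,g_\rho)$ of \eqref{eq: smooth optimal control problem} with $z_\rho\to\bar{z}$ in $H^1_\star(I,V)$ and $g_\rho\to\bar{g}$ in $H^1_\star(I,H)$; for $\rho$ small enough $\lVert g_\rho-\bar{g}\rVert_{H^1(I,H)}<\delta$, so Theorem~\ref{theorem: optimality conditions smooth} supplies adjoint variables $(u_\rho,\xi_\rho)$ solving \eqref{NOB glatt} together with the weak gradient identity. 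Since $\{(z_\rho,g_\rho)\}$ is bounded in $H^1_\star(I,V)\times H^1_\star(I,H)$, Corollary~\ref{cor: Beschränktheit Adjungiert} yields uniform bounds on $\xi_\rho$ in $L^\infty(I,V)$, on $u_\rho$ in $W^{1,\infty}(I,V^*)$, and on $q_\rho:=u_\rho+\sigma\Delta\xi_\rho$ in $L^\infty(I,V^*)$. Extracting a subsequence I obtain $u_\rho\rightharpoonup^* u$ in $W^{1,\infty}(I,V^*)$, $\xi_\rho\rightharpoonup^*\xi$ in $L^\infty(I,V)$, and $q_\rho\rightharpoonup^* q$ in $L^\infty(I,V^*)$ as in Lemma~\ref{lemma: optimality condition1}; the regularity $u\in W^{1,p}(I,V^*)$ and $\xi\in L^\infty(I,V)$ is inherited from these bounds.

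Passing to the limit in the linear relations is routine. In \eqref{NOB glatt a} one has $\Delta\xi_\rho\rightharpoonup^*\Delta\xi$ in $L^\infty(I,V^*)$ and $j_1'(z_\rho)\to j_1'(\bar{z})$ in $L^2(I,V^*)$ by continuity of $j_1'$ and $z_\rho\to\bar{z}$, which gives $\dot{u}=-\Delta\xi-j_1'(\bar{z})$; the terminal condition \eqref{NOB glatt b} passes to the limit because $z_\rho(T)\to\bar{z}(T)$ in $V$ yields $u_\rho(T)=j_2'(z_\rho(T))\to j_2'(\bar{z}(T))$, so \eqref{NOB nonsmooth1} and its integrated form \eqref{NOB nonsmooth system 1} hold. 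Letting $\rho\searrow 0$ in the weak gradient identity of Theorem~\ref{theorem: optimality conditions smooth}, using $2g_\rho-\bar{g}\to\bar{g}$ strongly in $H^1(I,H)$ and $\xi_\rho\rightharpoonup^*\xi$, produces \eqref{NOB nonsmooth2} in the form \eqref{NOB nonsmooth system 2}. The complementarity relation \eqref{NOB nonsmooth3} is precisely the statement of Lemma~\ref{lemma: optimality condition1}.

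It remains to establish \eqref{NOB nonsmooth4} and \eqref{NOB nonsmooth5}, which are in fact equivalent: since $q=u+\sigma\Delta\xi$ and $\langle\Delta\xi(t),\xi(t)\rangle_{V^*,V}=-\lVert\xi(t)\rVert_V^2$, one has $\langle q(t),\xi(t)\rangle_{V^*,V}=\langle u(t),\xi(t)\rangle_{V^*,V}-\sigma\lVert\xi(t)\rVert_V^2$, so \eqref{NOB nonsmooth5} follows from \eqref{NOB nonsmooth4}. Testing \eqref{NOB glatt c} with $\xi_\rho(t)$ and using $\vert\dot{z}_\rho(t)\vert_\rho''\ge 0$ gives the regularized pointwise inequality $\langle u_\rho(t),\xi_\rho(t)\rangle_{V^*,V}=\sigma\lVert\xi_\rho(t)\rVert_V^2+\int_\Omega\vert\dot{z}_\rho(t)\vert_\rho''\,\xi_\rho(t)^2\,\dx\ge\sigma\lVert\xi_\rho(t)\rVert_V^2$. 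I would integrate this against an arbitrary nonnegative $\eta\in\mathcal{C}_0^\infty(I)$: weak lower semicontinuity of $\xi\mapsto\lVert\xi\rVert_V^2$ yields $\liminf_\rho\sigma\int_I\eta\lVert\xi_\rho\rVert_V^2\,\dt\ge\sigma\int_I\eta\lVert\xi\rVert_V^2\,\dt$ on the right-hand side, and varying $\eta$ would then localize the inequality to almost every $t\in I$.

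The main obstacle is the passage to the limit in the product $\int_I\eta\,\langle u_\rho,\xi_\rho\rangle_{V^*,V}\,\dt$ on the left-hand side, since both factors converge only weakly. There is no time-compactness for $\xi_\rho$ — the estimate obtained by differentiating \eqref{NOB glatt c} in time degenerates like $\rho^{-2}$ — and no spatial compactness for $u_\rho$, so neither sequence converges strongly and the duality pairing does not converge by a naive weak--strong argument. I expect to overcome this by exploiting the structure of the regularization exactly as in Lemma~\ref{lemma: optimality condition1}, where the bound $\sqrt{\vert\dot{z}_\rho\vert_\rho''}\,\vert\dot{z}_\rho\vert\le\sqrt{2\rho}$ from Lemma~\ref{lemma: properties approx} forces the critical term to vanish, combined with the convexity, hence weak lower semicontinuity, of the quadratic form $\xi\mapsto\sigma\lVert\xi\rVert_V^2+\int_\Omega\vert\dot{z}_\rho\vert_\rho''\,\xi^2\,\dx$ induced by the operator in \eqref{NOB glatt c}. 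This one-sided semicontinuity estimate, together with the sign of the regularizing term, should deliver $\langle u(t),\xi(t)\rangle_{V^*,V}\ge\sigma\lVert\xi(t)\rVert_V^2$ for almost all $t\in I$, completing the proof.
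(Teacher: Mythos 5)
Your overall strategy is the paper's: take global solutions of \eqref{eq: smooth optimal control problem} converging to $(\bar z,\bar g)$ via Theorem \ref{theorem: convergence solutions}, invoke Theorem \ref{theorem: optimality conditions smooth} and the uniform bounds of Corollary \ref{cor: Beschränktheit Adjungiert}, extract weak$^*$ limits, and pass to the limit. Your treatment of \eqref{NOB nonsmooth1}, \eqref{NOB nonsmooth2}, \eqref{NOB nonsmooth3} (via Lemma \ref{lemma: optimality condition1}) and the derivation of \eqref{NOB nonsmooth5} from \eqref{NOB nonsmooth4} all match the paper.

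The genuine gap is in \eqref{NOB nonsmooth4}. You correctly isolate the crux --- passing to the limit in the pairing $\langle u_\rho(t),\xi_\rho(t)\rangle_{V^*,V}$ when both factors only converge weakly --- but you do not resolve it; you only state that you ``expect'' to overcome it by combining the $\sqrt{2\rho}$ decay from Lemma \ref{lemma: optimality condition1} with convexity of the quadratic form $\xi\mapsto\sigma\lVert\xi\rVert_V^2+\int_\Omega\vert\dot z_\rho\vert_\rho''\,\xi^2\,\dx$. Neither ingredient addresses the cross term: the bound $\sqrt{\vert\dot z_\rho\vert_\rho''}\,\vert\dot z_\rho\vert\le\sqrt{2\rho}$ controls a product involving $\dot z_\rho$, which does not occur in $\langle u_\rho,\xi_\rho\rangle$, and the quadratic form depends on $\rho$ through the coefficient $\vert\dot z_\rho\vert_\rho''\le 2/\rho$, so its ``lower semicontinuity along the sequence'' is not the weak lower semicontinuity of a fixed convex functional. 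The paper closes this step with a weak--strong pairing: from the uniform bound of $u_\rho$ in $W^{1,\infty}(I,V^*)$ it concludes $u_\rho\to u$ in $C(\bar I,V^*)$, while $\xi_\rho\rightharpoonup\xi$ in $L^1(I,V)$; hence $\langle u_\rho,\xi_\rho\rangle_{V^*,V}\to\langle u,\xi\rangle_{V^*,V}$ in $L^1(I)$, a further subsequence converges pointwise a.e.\ on $I$, and the pointwise inequality survives the limit. This strong time-uniform convergence of $u_\rho$ is exactly the ingredient you declared unavailable (``no spatial compactness for $u_\rho$, so neither sequence converges strongly''), so your argument as written does not close. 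Your $\eta$-localization of the right-hand side, using weak lower semicontinuity of $\xi\mapsto\int_I\eta\lVert\xi\rVert_V^2\,\dt$ on $L^2(I,V)$, is sound and in fact handles the $\liminf$ of $\lVert\xi_\rho(t)\rVert_V^2$ more carefully than the paper's pointwise formulation, but it cannot compensate for the missing limit on the left-hand side.
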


\begin{proof}
Let $\rho>0 $ and $(z_\rho,g_\rho)\in H^1_\star(I,V)\times H^1_\star(I,H)$ be a global solution of \eqref{eq: smooth optimal control problem} such that $z_\rho \to \bar{z}$ in $H^1_\star(I,V)$ and $g_\rho \to g$ in $H^1_\star(I,H)$, c.f.  Theorem \ref{theorem: convergence solutions}. Due to the boundedness properties in Corollary \ref{cor: Beschränktheit Adjungiert} we can choose weak- or weak*-convergent subsequences and pass to the limit in \eqref{NOB nonsmooth system 1} and \eqref{NOB nonsmooth system 2}. Condition \eqref{NOB nonsmooth3} holds due to Lemma \ref{lemma: optimality condition1}.\\
In order to prove \eqref{NOB nonsmooth4} we test \eqref{NOB glatt c} with $\xi_\rho$ and obtain f.a.a. $t\in I$

\begin{equation*}
\langle u_\rho(t), \xi_\rho(t)\rangle_{V^*,V}  =  \sigma \lVert \xi_\rho(t) \rVert_V^2 + \int\limits_\Omega \vert \dot{z}_\rho(t) \vert_\rho'' \xi_\rho(t)^2 \; \dx \geq \sigma \lVert \xi_\rho(t) \rVert_V^2.
\end{equation*}

The boundedness properties of $u_\rho, \xi_\rho$ and embeddings imply $u_\rho \to u$ in $C(\bar{I},V^*)$ and $\xi_\rho \rightharpoonup \xi$ in $L^1(I,V)$. Hence $\langle u_\rho , \xi_\rho \rangle_{V^*,V} \to \langle u , \xi \rangle_{V^*,V}$ in $L^1(I)$ and we can choose a subsequence, which converges pointwise f.a.a. $t\in I$.\\
Finally, equation \eqref{NOB nonsmooth5} can be proven by testing the equation $q=u+\sigma \Delta \xi$ with $\xi$ and using  \eqref{NOB nonsmooth4}:

\begin{equation*}
\langle q(t), \xi(t)\rangle_{V^*,V}   = \langle u(t), \xi(t)\rangle_{V^*,V}   - \sigma \lVert \xi(t) \rVert_V^2 \geq 0.
\end{equation*}

\end{proof}

Let us compare the previous result to the formal optimality conditions of section \ref{chapter: Formal derivation optimality system}.
The equations \eqref{NOB nonsmooth1}--\eqref{NOB nonsmooth2} are equal to \eqref{eq51a}--\eqref{eq51d}.
The relation \eqref{NOB nonsmooth3} is a weak formulation of the formal conditions \eqref{formal optimality system cone a}, \eqref{formal optimality system cone e}. Moreover, the formal condition \eqref{formal optimality system cone} implies $q\xi\ge0$, which corresponds to \eqref{NOB nonsmooth5}.
It is an open problem whether all the other implications of \eqref{formal optimality system cone} can be proven
to be necessary for local optimality.

\begin{remark}\label{compare_to_SWW}
 Theorem \ref{theorem: optimality system nonsmooth} gives stronger results than \cite[Theorem 5.2]{SWW2016}.
 First of all, due to the presence of the posivitive viscosity parameter $\sigma$, we were able to proof the regularity
 $u \in W^{1,p}(I,V^*)$ and $\xi \in L^\infty(I,V)$,
 which is stronger than the regularity obtained in \cite{SWW2016}:
 $u \in L^\infty(I,V^*)$ and $\xi \in W^{-1,p}(I,L^2(\Omega))$.
 In addition, the non-negativity conditions \eqref{NOB nonsmooth3} and  \eqref{NOB nonsmooth4} are new.
 Similarly as above, one can argue that these conditions are also valid for the problem considered in \cite{SWW2016}.
\end{remark}

\subsection{Towards additional complementarity conditions}
\label{section: Further optimality conditions}

The conditions \eqref{formal optimality system cone b}-\eqref{formal optimality system cone d} remain unproven. In this section we turn our focus on the condition \eqref{formal optimality system cone c}, which is

\begin{equation*}
\dot{z}(t,x)=0,\quad \vert g(t,x) + \Delta z(t,x) + \sigma \Delta \dot{z}(t,x)\vert < 1 \quad \Longrightarrow \quad \xi(t,x)= 0.
\end{equation*}

We will now present a possible proof for this optimality condition, which however requires a strong assumption about the sequence $\vert \dot{z}_\rho \vert_\rho'$. From now on we work with the function $\vert \cdot \vert_\rho$, which was suggested in the end of Subsection \ref{subsection: Approximation of absolute value}.

\begin{equation}
\vert v \vert_\rho =
\begin{cases}
\vert v \vert & \vert v \vert > \rho, \\
\frac{1}{3}\rho + \frac{1}{\rho^2}v^2(\rho - \frac{1}{3}\vert v \vert) & \vert v \vert \leq \rho,
\end{cases}.
\end{equation}
\label{explizite approx funct}

\begin{lemma}
	Let $(\bar{z},\bar{g})\in H^1_\star(I,V) \times H^1_\star(I,H)$ be a local solution of \eqref{eq: non-smooth problem}. Assume that there exists a sequence $(\rho_n)_n$ with $\rho_n \searrow 0$ such that  $\vert \dot{z}_n(t,x)\vert_{\rho_n}'$ is pointwise convergent for a.a. $(t,x)\in I\times \Omega$.\\
	Then there exists adjoint variables $(u, \xi)$ as in Theorem \ref{theorem: optimality system nonsmooth} such that
	$\xi(t,x)= 0$ for almost all $(t,x)\in I\times \Omega$ that satisfy $\dot{\bar{z}}(t,x)= 0$ and $ \vert \bar{g}(t,x) + \Delta \bar{z}(t,x) + \sigma \Delta \dot{\bar{z}}(t,x)\vert < 1$.
\end{lemma}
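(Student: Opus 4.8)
The plan is to exploit the pointwise identity linking the regularized adjoint variables, namely $q_\rho = |\dot z_\rho|_\rho'' \xi_\rho$ (which follows from \eqref{NOB glatt c} together with $q_\rho = u_\rho + \sigma\Delta\xi_\rho$), and the identity $g_\rho + \Delta z_\rho + \sigma\Delta\dot z_\rho = |\dot z_\rho|_\rho'$ read off from the regularized state equation \eqref{eq:state glatt}. I fix the sequence $\rho_n\searrow 0$ from the hypothesis, let $(z_n,g_n)$ be the associated global solutions of \eqref{eq: smooth optimal control problem}, so that $z_n\to\bar z$ in $H^1_\star(I,V)$ and $g_n\to\bar g$ in $H^1_\star(I,H)$ by Theorem \ref{theorem: convergence solutions}. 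After passing to a subsequence I arrange $\dot z_n\to\dot{\bar z}$ a.e.\ on $I\times\Omega$, and, using the a priori bounds of Corollary \ref{cor: Beschränktheit Adjungiert}, I extract weak-$*$ limits $u_n\rightharpoonup^* u$, $\xi_n\rightharpoonup^*\xi$, $q_n\rightharpoonup^* q$ furnishing the adjoint variables of Theorem \ref{theorem: optimality system nonsmooth}. Writing $E:=\{(t,x): \dot{\bar z}=0,\ |\bar g+\Delta\bar z+\sigma\Delta\dot{\bar z}|<1\}$, the goal is to prove $\xi=0$ a.e.\ on $E$.

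The first step is to show $|\dot z_n|_{\rho_n}''\to+\infty$ a.e.\ on $E$. Since $|\cdot|_\rho'$ is bounded by $1$, the hypothesised pointwise limit of $|\dot z_n|_{\rho_n}'=g_n+\Delta z_n+\sigma\Delta\dot z_n$ converges in $L^2(I\times\Omega)$ by dominated convergence, while the right-hand side converges weakly in $L^2(I\times\Omega)$ to $\bar g+\Delta\bar z+\sigma\Delta\dot{\bar z}$; here the uniform $L^2(I,H)$-bounds on $\Delta z_n,\Delta\dot z_n$ of Lemma \ref{lemma: regularity space} and Theorem \ref{theorem: regularity of the state} identify the weak limit. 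Hence on $E$ we have $\dot z_n\to 0$ and $|\dot z_n|_{\rho_n}'\to c$ with $|c|<1$. With the explicit approximation $|\cdot|_\rho$ from \eqref{explizite approx funct} one computes, for $|v|\le\rho$, that $|v|_\rho'=\operatorname{sign}(v)(2r-r^2)$ and $|v|_\rho''=\tfrac{2}{\rho}(1-r)$ with $r=|v|/\rho$. Since $|c|<1$ forces $|\dot z_n|<\rho_n$ eventually and $r_n:=|\dot z_n|/\rho_n\to r_*$ with $1-r_*=\sqrt{1-|c|}>0$, I obtain $|\dot z_n|_{\rho_n}''=\tfrac{2}{\rho_n}(1-r_n)\to+\infty$ a.e.\ on $E$.

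The final step passes to the limit in $q_n=|\dot z_n|_{\rho_n}''\xi_n$ to force $\xi=0$ on $E$. Corollary \ref{cor: Beschränktheit Adjungiert} gives $\||\dot z_\rho|_\rho''\xi_\rho^2\|_{L^\infty(I,L^1(\Omega))}\le C$, so $w_n:=\sqrt{|\dot z_n|_{\rho_n}''}\,\xi_n$ is bounded in $L^2(I\times\Omega)$ with $w_n\rightharpoonup w$ along a subsequence. For $\psi\in C_0^\infty(I\times\Omega)$ I split $E$ according to whether $|\dot z_n|_{\rho_n}''$ vanishes (there the indicator of this set tends to $0$ a.e., so its contribution vanishes by a weak--strong pairing of the bounded $\xi_n$ against $\psi$ times that indicator) or is positive (there $\xi_n=w_n/\sqrt{|\dot z_n|_{\rho_n}''}$). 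On the positive part I invoke Egorov's theorem to obtain $E_\eta\subseteq E$ with $|E\setminus E_\eta|<\eta$ on which $|\dot z_n|_{\rho_n}''\to\infty$ uniformly; there $\|\psi/\sqrt{|\dot z_n|_{\rho_n}''}\|_{L^2(E_\eta)}\to0$, so the weak--strong pairing against $w_n$ vanishes, whereas the leftover $E\setminus E_\eta$ contributes at most $C\,\|\psi\|_{L^2(E\setminus E_\eta)}$, which tends to $0$ as $\eta\to0$ by absolute continuity of the integral. This yields $\int_E\xi\,\psi=0$ for all such $\psi$, hence $\xi=0$ a.e.\ on $E$.

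I expect the main obstacle to be precisely this last passage: the weight $|\dot z_n|_{\rho_n}''$ blows up on $E$ while $\xi_n$ is controlled only weakly, so one cannot naively multiply limits. The remedy is the square-root factorization $q_n=\sqrt{|\dot z_n|_{\rho_n}''}\,w_n$ combined with Egorov and absolute continuity of the integral; a Fatou argument applied to $|\dot z_n|_{\rho_n}''\xi_n^2$ would also close the proof if one first upgraded $\xi_n$ to a.e.\ convergence, which seems harder to obtain directly. A secondary technical point is the clean identification of the pointwise limit of $|\dot z_n|_{\rho_n}'$ with $\bar g+\Delta\bar z+\sigma\Delta\dot{\bar z}$, for which the interior spatial regularity of $\Delta z_n,\Delta\dot z_n$ is essential.
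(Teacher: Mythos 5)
Your proposal is correct and follows essentially the same route as the paper: identify the pointwise limit of $\vert \dot z_n\vert_{\rho_n}'$ with $\bar g+\Delta\bar z+\sigma\Delta\dot{\bar z}$ via the regularized state equation and dominated convergence, use the explicit regularizer to show $\vert\dot z_n\vert_{\rho_n}''$ blows up like $1/\rho_n$ on the set in question, and combine the uniform bound on $\bigl\lVert\,\vert\dot z_n\vert_{\rho_n}''\xi_n^2\bigr\rVert_{L^1}$ with Egorov and absolute continuity of the integral to kill $\xi$ there. The only (harmless) deviation is in the final step: the paper applies Egorov to the uniform convergence of $\vert\dot z_n\vert_{\rho_n}'$ to get the quantitative bound $\vert\dot z_n\vert_{\rho_n}''\ge 2\sqrt{\eps}/\rho_n$ and hence strong $L^2$ convergence $\xi_n\to0$ on $M_\eps\cap B_\delta$, whereas you apply Egorov to the uniform divergence of $\vert\dot z_n\vert_{\rho_n}''$ and conclude via the square-root factorization $q_n=\sqrt{\vert\dot z_n\vert_{\rho_n}''}\,w_n$, which yields only $\int\xi_n\psi\to0$ but still identifies the weak limit $\xi=0$.
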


\begin{proof}

	Define the set

	\begin{equation*}
	M := \{ x\in \Omega \mid \dot{z}(t,x) = 0, \; \vert \bar{g}(t,x) + \Delta \bar{z}(t,x) + \sigma \Delta \dot{\bar{z}}(t,x)  \vert <1 \},
	\end{equation*}
	which is well-defined up to a set of zero measure.
	Let $(\rho_n)_n \in \R $ be a sequence with $ \rho_n \searrow 0 $ and $(z_n,g_n)$ the solution of $(P_{\rho_n})$ with
	\begin{align*}
	&g_n \to \bar{g} \qquad \text{in }  H^1_\star(I,H), \\
	&z_n \to \bar{z} \qquad \text{in} \; H^1_\star(I,V),
	\end{align*}
	c.f. Theorem \ref{theorem: convergence solutions}.
	Hence, it holds
	$g_n\to \bar{g},\; \Delta z_n \to \Delta \bar{z},\; \Delta \dot{z}_n\to \Delta \dot{\bar{z}}$ each in $L^2(I,V^*)$.
	Moreover, $\vert \dot{z}_\rho\vert_\rho'$ converges to $\bar{g} + \Delta \bar{z} + \sigma \Delta \dot{\bar{z}}$ in $L^2(I,V^*)$ since
	\begin{align*}
	\big\lVert \vert \dot{z}_n \vert_{\rho_n}' - \bigl[  \bar{g} + \Delta \bar{z}+ \sigma \Delta \dot{\bar{z}} \bigr] \big\rVert_{L^2(I,V^*)}
	&= \big\lVert \bigl[ g_\rho + \Delta z_\rho + \sigma\dot{z}_\rho \bigr] - \bigl[  \bar{g} + \Delta \bar{z}+ \sigma \Delta \dot{\bar{z}} \bigr] \big\rVert_{L^2(I,V^*)} \\
	&\longrightarrow 0 \qquad \text{for}\; \rho \searrow 0.
	\end{align*}
	By assumption, $\vert \dot{z}_n(t,x) \vert_{\rho_n}'$ is pointwise convergent a.e.\@ on $I\times \Omega$.
	Since $\vert v \vert_{\rho_n}'\in [-1,1]$ for all $v\in \R$ we obtain by Lebesgue's dominated convergence theorem
	that $\vert \dot{z}_n \vert_{\rho_n}'$ converges in $L^2(I\times \Omega)$ to the pointwise limit. Hence $\bar{g}+ \Delta \bar{z}+ \sigma \Delta \dot{\bar{z}}$ is the pointwise limit of $\vert \dot{z}_n \vert_{\rho_n}'$.

	Now let $0 <\eps < \frac{1}{2}$. We define the family of sets
	\begin{equation*}
	M_\eps := \{ (t,x)\in I\times \Omega \mid \dot{z}(t,x) = 0, \; \vert \bar{g}(t,x) + \Delta \bar{z}(t,x) + \sigma \Delta \dot{\bar{z}}(t,x) \vert <1-2 \eps \}.
	\end{equation*}
Due to Egorov's Theorem, there exists  for all $\delta >0$  a set $B_\delta \subset I\times \Omega$ such that $\vert \dot{z}_\rho \vert_\rho'$ converges uniformly on $B_\delta$ and $\big\vert \bigl( I\times \Omega \bigr) \setminus B_\delta \big\vert \leq \delta $.
In particular, it follows $\vert M_\eps \setminus B_\delta \vert \leq \delta$.
	The pointwise convergence of $\vert \dot{z}_n \vert_{\rho_n}'$ implies
	\begin{equation*}
	\lim\limits_{n\to \infty} \vert \dot{z}_n(t,x) \vert_{\rho_n}' \leq 1- 2\eps \qquad \text{a.e.\@ on } M_\eps,
	\end{equation*}
	and due to the uniform convergence on $B_\delta$ there exists $N_0\in \N$ such that
	\begin{equation} \label{proof: lemma cone c 1}
	\vert \dot{z_n}(t,x) \vert \leq 1 - \eps \qquad \text{for a.a.}\; (t,x) \in M_\eps \cap B_\delta \quad\text{and}\; \forall\; n \geq N_0.
	\end{equation}
	Now recall that we made a particular choice of $\vert \cdot \vert_{\rho_n}'$, see page \eqref{explizite approx funct}. It is easy to verify that this function satisfies
	\begin{equation*}
	\big\vert \vert v \vert_{\rho_n}' \big\vert \leq 1-\eps \quad \Leftrightarrow \quad \vert v \vert \leq \rho_n(1-\sqrt{\eps}) \quad
	\Leftrightarrow \quad \vert v \vert_{\rho_n}'' \geq \frac{2}{\rho_n} \sqrt{\eps}.
	\end{equation*}
	Hence, it holds
	\begin{equation*}
	\vert \dot{z}_n \vert_{\rho_n}'' \geq \frac{2}{\rho_n} \sqrt{\eps} \qquad \text{for a.a.}\; (t,x) \in M_\eps \cap B_\delta, \quad\text{and}\; \forall\; n \geq N_0.
	\end{equation*}
The boundedness of $\Big\lVert  \vert  \dot{z}_n \vert_{\rho_n}'' \xi_n^2 \Big\rVert_{L^1(I,L^1(\Omega))}$, cf., Corollary \ref{cor: Beschränktheit Adjungiert}, implies the existence of a $C>0$ such that for all $n\geq N_0$
	\begin{align*}
	C \geq  &\lVert \vert \dot{z}_n(t,x) \vert_{\rho_n}'' \xi_n(t,x)^2 \rVert_{L^1(I,L^1(\Omega))}
	\geq \int_{M_\eps \cap B_\delta} \vert \dot{z}_n (t,x)\vert_{\rho_n}'' \xi_n(t,x)^2\; \dx \dt \\
	&\geq \frac{2\sqrt{\eps}}{\rho_n} \int_{M_\eps \cap B_\delta} \xi_n(t,x)^2\; \dx \dt
	\end{align*}
is satisfied.
	Due to embedding theorems for Sobolev functions there exists $p>2$ such that $V\hookrightarrow L^p(\Omega)$. Choose $q$ such that $\frac{1}{p} + \frac{1}{q} = \frac{1}{2}$. We obtain
	\begin{equation*}
	0 \leq \int_{M_\eps} \xi_n^2 \; \dx\dt
	\leq \frac{\rho_n}{2\sqrt{\eps}} C + \int_{M_\eps \setminus B_\delta} \xi_n^2\; \dx\dt
	\leq \frac{\rho_n}{2\sqrt{\eps}} C + \delta^{\frac{1}{q}} \lVert \xi_n\rVert_{L^p(I,V)}.
	\end{equation*}
	As $\xi_n$ is bounded in $L^\infty(I,V)$ by Corollary \ref{cor: Beschränktheit Adjungiert}, there is a constant $C'>0$ such that
	\[
	0 \leq \int_{M_\eps} \xi_n^2 \; \dx\dt \le C' \left( \frac{\rho_n}{\sqrt{\eps}}  + \delta^{\frac{1}{q}}\right).
	\]
	These previous results yields
	\begin{equation*}
	0\le \limsup\limits_{n\to \infty} \int_{M_\eps} \xi_n^2\;\dt \dx
	\leq \limsup\limits_{n\to \infty}C' \left( \frac{\rho_n}{\sqrt{\eps}}  + \delta^{\frac{1}{q}}\right)
	=C' \delta^{\frac{1}{q}}.
	\end{equation*}
	Since $\delta>0$ was arbitrary, we can conclude $\lim\limits_{n\to \infty} \int_{M_\eps} \xi_n^2\; \dt \dx = 0$. Hence,
	 we showed $\xi_n \to 0$ in $L^2(M_\eps)$.
	 Due to the boundedness of $\xi$ in $L^2(I,H)$, we get $\xi_n \rightharpoonup \xi$ (for a subsequence). Weak and strong limits have to be the same, and therefore it follows $\xi = 0$ a.e.\@ on $M_\eps$ (for all weak subsequential limit points of $\xi_n$).
Since $M = \bigcup_{k\in \N} M_\frac{1}{k}$, we obtain $\xi = 0$ a.e.\@ on $M$.
\end{proof}

In the next corollary we give a sufficient condition for pointwise convergence of $\vert \dot{z}_n \vert_\rho'$.

\begin{corollary}
	Let $p \in (1,\infty)$ and assume that there exists a sequence $(\rho_n)_n$ with $\rho_n \searrow 0$ such that $\vert \dot{z}_n \vert_{\rho_n}'$ is bounded in $L^p(I,V)$. Then there exists a subsequence (denoted again by $\rho_n$) such that $\vert \dot{z}_n \vert_{\rho_n}'$ is pointwise convergent a.e.\@ on $I\times \Omega$.
\end{corollary}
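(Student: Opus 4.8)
The plan is to combine the boundedness in $L^p(I,V)$ with the strong convergence that is already available in the weaker space $L^2(I,V^*)$, and then to use a compactness interpolation (Ehrling's lemma) to promote this to strong convergence in an intermediate Bochner space $L^r(I,H)$, from which pointwise a.e.\ convergence of a subsequence follows.

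First I would record the two relevant facts about $f_n := \vert \dot{z}_n \vert_{\rho_n}'$. By the regularized state equation \eqref{eq:state glatt} we have $f_n = g_n + \Delta z_n + \sigma \Delta \dot{z}_n$; since $g_n \to \bar g$ in $H^1_\star(I,H)$ and $z_n \to \bar z$ in $H^1_\star(I,V)$ by Theorem \ref{theorem: convergence solutions}, the right-hand side converges in $L^2(I,V^*)$, exactly as in the proof of the preceding lemma, so $(f_n)$ is Cauchy in $L^2(I,V^*)$. On the other hand, Lemma \ref{lemma: properties approx} gives the pointwise bound $\vert f_n \vert \le 1$, so $(f_n)$ is bounded in $L^\infty(I\times\Omega)$, while by hypothesis it is bounded in $L^p(I,V)$. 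Setting $r := \min(p,2) \in (1,2]$, finiteness of $I$ makes $(f_n)$ bounded in $L^r(I,V)$ (say by $M$) and Cauchy in $L^r(I,V^*)$.

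The compact embedding $V = H^1_0(\Omega) \hookrightarrow\hookrightarrow H = L^2(\Omega)$ (Rellich--Kondrachov) together with the continuous embedding $H \hookrightarrow V^*$ yields Ehrling's inequality: for every $\eps > 0$ there is $C_\eps > 0$ with $\lVert u \rVert_H \le \eps \lVert u \rVert_V + C_\eps \lVert u \rVert_{V^*}$ for all $u \in V$. Applying this to $f_n(t) - f_m(t)$ and taking the $L^r(I)$-norm in $t$ gives
\[
\lVert f_n - f_m \rVert_{L^r(I,H)} \le \eps\, \lVert f_n - f_m \rVert_{L^r(I,V)} + C_\eps\, \lVert f_n - f_m \rVert_{L^r(I,V^*)} \le 2M\eps + C_\eps\, \lVert f_n - f_m \rVert_{L^r(I,V^*)}.
\]
Letting $n,m \to \infty$ and then $\eps \to 0$ shows $(f_n)$ is Cauchy, hence convergent, in $L^r(I,H)$ to some limit $f$. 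To reach pointwise a.e.\ convergence I would then extract from this $L^r(I,H)$-convergence a subsequence with $\lVert f_{n_k}(t) - f(t) \rVert_H \to 0$ for a.a.\ $t \in I$; since $\lVert f_{n_k}(t) - f(t) \rVert_H \le 2\vert \Omega \vert^{1/2}$ by the uniform bound $\vert f_{n_k} \vert \le 1$ (and $\vert f \vert \le 1$), dominated convergence upgrades this to $f_{n_k} \to f$ in $L^2(I\times\Omega) = L^2(I,H)$, whence a further subsequence converges pointwise a.e.\ on $I\times\Omega$, which is the claim.

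I expect the main obstacle to be the passage from Bochner convergence in the mixed norm $L^r(I,H) = L^r(I,L^2(\Omega))$ to genuine pointwise a.e.\ convergence on the product $I\times\Omega$: mixed-norm convergence does not by itself produce a single subsequence converging a.e.\ on $I\times\Omega$, and it is precisely the pointwise bound $\vert f_n \vert \le 1$ from Lemma \ref{lemma: properties approx} that lets dominated convergence bridge this gap. The remainder is a standard compactness argument, but one should emphasize that boundedness in $L^p(I,V)$ alone would only yield weak convergence; the strong $L^2(I,V^*)$ convergence inherited from the state equation is what makes Ehrling's lemma effective and is therefore indispensable.
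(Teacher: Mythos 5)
Your proof is correct, but it follows a genuinely different route from the paper. The paper bounds $f_n:=\vert\dot z_n\vert_{\rho_n}'=g_n+\Delta z_n+\sigma\Delta\dot z_n$ in $H^1(I,V^*)$ (using the boundedness of $g_n$ in $H^1_\star(I,H)$ and of $z_n$ in $H^2(I,V)$ from Theorem \ref{theorem: regularity of the state}) and then invokes the Aubin--Lions lemma for the compact embedding $L^p(I,V)\cap H^1(I,V^*)\hookrightarrow L^p(I,H)$, extracting a subsequence that converges strongly in $L^p(I,H)$ and hence, after a further extraction, pointwise a.e. You instead exploit the \emph{strong convergence} $f_n\to\bar g+\Delta\bar z+\sigma\Delta\dot{\bar z}$ in $L^2(I,V^*)$ (which the paper itself establishes in the preceding lemma) and combine it with the $L^p(I,V)$ bound via Ehrling's inequality to get a Cauchy sequence in $L^r(I,H)$, $r=\min(p,2)$. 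Your version avoids any control of the time derivative of $f_n$ and identifies the limit of the whole sequence in $L^r(I,H)$, whereas the paper's version only needs boundedness (not convergence) of the data but requires the $H^1(I,V^*)$ estimate; both are interpolation-compactness arguments at heart, since Ehrling's lemma is the engine inside Aubin--Lions. Two minor remarks: your final passage from $L^r(I,H)$-convergence to a.e. convergence on $I\times\Omega$ via the uniform bound $\vert f_n\vert\le 1$ and dominated convergence is fine, but the $L^\infty$ bound is not actually indispensable there --- on the finite measure space $I\times\Omega$ one has $\lVert\cdot\rVert_{L^r(I\times\Omega)}\le C\lVert\cdot\rVert_{L^r(I,L^2(\Omega))}$ by H\"older, so $L^r(I,H)$-convergence already yields $L^r(I\times\Omega)$-convergence and hence an a.e. convergent subsequence; and the bound $\vert f\vert\le1$ for the limit should be justified by passing to the a.e. limit in $\vert f_n\vert\le1$, which you only note in passing.
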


\begin{proof}
	Since $z_n$ solves the smooth state equation we have
	\begin{equation*}
	\vert \dot{z}_n(t) \vert_{\rho_n}' = g_n + \sigma \Delta \dot{z}_n + \Delta z_n \qquad \text{a.e.\@ on } I.
	\end{equation*}
	We obtain by the boundedness of $g_n$ and $z_n$ that $\bigl( \vert \dot{z}_n \vert_{\rho_n}' \bigr)_n$ is bounded in $H^1(I,V^*)$.
	Hence, $\bigl( \vert \dot{z}_n \vert_{\rho_n}' \bigr)_n$ is bounded in $L^p(I,V)\cap H^1(I,V^*)$. Due to  the Aubin-Lions lemma \cite{A1963,L1969}, the embedding
	$L^p(I,V)\cap H^1(I,V^*) \hookrightarrow L^p(I,H)$.
	is compact, which proves the claim.
\end{proof}

\begin{remark}
Since $\nabla \bigl( \vert \dot{z}_n(t) \vert_\rho' \bigr) = \vert \dot{z}_n(t) \vert_\rho'' \nabla \dot{z}_n(t) $ and $\vert \cdot \vert_\rho''$ is not bounded for $\rho \searrow 0$ the sequence $\vert \dot{z}_n \vert_{\rho_n}'$ is not necessarily bounded in $L^p(I,V)$ and has to be assumed.
\end{remark}

\section{Conclusion and outlook}

We derived and proved optimality conditions for the non-smooth optimal control problem. Our optimality system is similar to that in \cite{SWW2016}. We obtained stronger results, e.g., higher regularity of the adjoint variables.
Despite the high regularity,
we were not able to prove some of the expected optimality condition.
Here, we presented an additional assumption to prove one of the missing conditions.

\bibliography{mybib}
\bibliographystyle{plain}

\end{document}